
\documentclass{article}

\usepackage[utf8]{inputenc}
\usepackage[all,arc]{xy}
\usepackage{enumerate}
\usepackage[top=1in, bottom=1.25in, left=1.25in, right=1.25in]{geometry} 
\usepackage{parskip}
\usepackage{braket}
\usepackage{tikz-cd}
\usepackage{verbatim}
\usepackage{tabu}
\usepackage{ggmaths}

\usepackage{biblatex}
\usepackage{hyperref}
\usepackage{xurl}
\hypersetup{breaklinks=true}
\addbibresource{../refs.bib}

\geometry{a4paper}

\title{Good election rules with more than three candidates are Borda}
\author{\makeauthorentry}

\begin{document}

\maketitle

\begin{abstract}

Arrow \autocite{arrow} proved that for three or more candidates, the IIA condition is enough to forbid all non-dictatorial election rules (or Social Welfare Functions). Maskin \autocite{maskin} introduced the weaker MIIA condition, which permits the ``Borda'' election rules where each voter assigns points linearly to each candidate according to their order of preference. However, in previous work \autocite{paper1} we demonstrated that there exist Social Welfare Functions between three candidates and satisfying the MIIA condition which are far from being Borda rules.

We demonstrate that this phenomenon is unique to the case of three candidates. As soon as a fourth candidate is introduced, and indeed for any larger number of candidates, the only good election rules are the unweighted Borda rules.

\end{abstract}

\section{Introduction}

As in \autocite{paper1}, we let a set of voters be $V$ and the set of candidates $C$, with $|C| = k < \infty$. We always label these candidates as $\set{c_i}_{i=1}^k$, and we treat the indices modulo $k$. Throughout this paper, we take $k > 3$.

We will consider two cases for the size of $V$. Either it will be finite, in which case we say $|V| = n$, or it will be infinite, in which case we will require that it be equipped with a $\sigma$-algebra $\Sigma$ and a measure $\nu$. SWFs on unmeasurable electorates are not interesting within the context of these investigations due to the need to ``count'' or ``measure'' different sets of voters in order to compare their influences in choosing the final ranking.

In the infinite case, we require that the measure is finite (i.e. $\nu(V) < \infty$), and indeed we normalise to $\nu(V)=1$. Again, this is the critical case because if $\nu(V) = \infty$ then we might have several competing groups all with measure $\infty$, making it impossible to compare them meaningfully. We also require that $\nu$ is atomless (also known as diffuse); that is to say, there is no set $A \in \Sigma$ with $\nu(A)>0$ such that for any $B \subset A$ in $\Sigma$ we have either $\nu(B) = 0$ and $\nu(B) = \nu(A)$. Sierpinski \autocite{sierpinski} showed that this allows us to find subsets of arbitrary measure within any set of larger measure.

We define $\mathfrak{R}$ to be the set of possible votes $r$ that voter $v$ can register. Each $r$ is a total ordering of $C$, and we will treat $r$ as a bijection $r: C \hookrightarrow \set{0, \dots, |C|-1}$, where $r^{-1}(0)$ is the lowest ranked candidate, and $r^{-1}(|C|-1)$ is the highest ranked. We will sometimes write $r$ simply as a list of candidates; for example, $c_1 c_2 c_3$ corresponds to a voter's ballot placing candidate $c_1$ first, followed by $c_2$ and then $c_3$. This corresponds to $r: c_i \mapsto 3-i$.

We define $\bar{E} \subset \mathfrak{R}^V$ to be the set of possible elections. For $|V|$ finite, this is simply $\bar{E} = \mathfrak{R}^V$; if $|V|$ is infinite then $\bar{E} = \set{e: V \rightarrow \mathfrak{R} | e \text{ measurable}}$. Note that $C$ is always finite, so $\mathfrak{R}$ is finite, and we endow it with the discrete $\sigma$-algebra. In other papers we have considered SWFs which restrict the set of legal elections to some $E \subset \bar{E}$; in this paper, we always have $E = \bar{E}$. This is called the ``unrestricted domain'', and a SWF with this property is said to fulfil condition U. In \autocite{paper1} we showed that this is an increasing, intermediate, separable ballot domain, so we will be able to make use of various lemmas here which apply to such domains.

An election result $\succ$ is a weak ordering of $C$ - in other words, an equivalence relation of ``tying'', together with a total ordering of ``winning'' between the equivalence classes. We call the set of all possible weak orderings $\mathfrak{R}_=$. In the most general sense, a SWF is any function $F: E \rightarrow \mathfrak{R}_=$.

For $r \in \mathfrak{R}$ we let $\pi_{i,j}(r) = r(c_i)-r(c_j)$, and we define

\begin{equation}
D_k = \pi_{i,j}(\mathfrak{R}) = \set{1-k, 2-k, \dots, -1, 1, 2, \dots, k-1}
\end{equation}

Then for an election $e \in E$ we let $\pi_{i,j}(e)(v) = \pi_{i,j}(e(v))$, so $\pi_{i,j}(e)$ is the ``relative election'' comparing the positions of candidates $c_i$ and $c_j$ for each voter. Then $\pi_{i,j}(E) = D_k^V$ for $V$ finite, and for $V$ infinite $\pi_{i,j}(E)$ is the set of measurable functions from $V$ to $D_k$. In either case we let $\pi_{i,j}(E) = A$. We define the partial order on $A$ given by $a_1 \geq a_2$ if and only if $a_1(v) \geq a_2(v)$ for all $v$.

Finally, for an election result $\succ$ we write $\pi_{i,j}(\succ) = W$ if $c_i \succ c_j$, $\pi_{i,j}(\succ)=L$ if $c_j \succ c_i$, and $\pi_{i,j}(\succ) = T$ otherwise. Here $W$ stands for ``win'', $T$ for ``tie'' and $L$ for ``loss''. For convenience we impose the natural total order on these three symbols: $W > T > L$.  Similarly, we will sometimes abuse notation by writing $W = -L$, $L = -W$ and $T=-T$.

\section{Conditions on SWFs}\label{section:conditions}

We now define some conditions on SWFs. The first is the critical Modified IIA condition introduced by Maskin in \autocite{maskin}, as an alternative to Arrow's \autocite{arrow} stricter IIA condition. This definition is simplified in the case of the unrestricted domain; for the more general definition see \autocite{paper1}.

\begin{defn}[Modified Irrelevance of Independent Alternatives - MIIA]
A SWF $F$ satisfies the MIIA condition if, for two candidates $c_i$ and $c_j$, there exists a ``relative SWF'': a function $f_{i,j}: A \rightarrow \set{W, T, L}$ such that for all $e \in E$ we have $f_{i,j}(\pi_{i,j}(e)) = \pi_{i,j}(F(e))$.
\end{defn}

Intuitively, if for two elections, each voter places the same order between candidates $c_1$ and $c_2$ with the same number of other candidates between them, then the results of those two elections must give the same ranking between $c_1$ and $c_2$.

The anonymity condition is that the SWF is unaffected by which voters submit which ballots, and instead is determined only by \emph{how many} voters (either by count or by measure) vote in a given way. In light of this, we define $\tau: E \rightarrow \mathbbm{R}_{\geq 0}^{\mathfrak{R}}$ by $\tau(e)(r) = \nu(e^{-1}(r))$ (or for $V$ finite $\tau(e)(r) = |e^{-1}(r)|$), and we let $E' = \Image(\tau)$.

\begin{defn}[Anonymity - A]
A SWF $F$ satisfies anonymity if there exists a function $F': E' \rightarrow \mathfrak{R}_=$ such that $F = F' \circ \tau$.
\end{defn}

We can think of $F'$ as the ``weight-based'' SWF. We proved in \autocite{paper1} that $F$ and $F'$ determine one another.

Similarly, we let $\tau_A: A \rightarrow \mathbbm{R}_{\geq 0}^{D_k}$ by $\tau_A(a)(i) = \nu(a^{-1}(i))$, or $|a^{-1}(i)|$ for $V$ finite, and let $A' = \Image(\tau_A)$. We also define $\varrho: A' \rightarrow A'$ by $\varrho(\vec{\alpha})_k = \alpha_{-k}$; it is immediate that $\varrho \circ \tau_A \circ \pi_{i,j} = \tau_A \circ \pi_{j,i}$. The following is a result from \autocite{paper1}:

\begin{lem}\label{amiia}
Let $F$ be a SWF on a ballot domain satisfying MIIA and A. Then for any two candidates $c_i$ and $c_j$ we can define a weight-based relative SWF $f'_{i,j}$ such that $f_{i,j} = f'_{i,j} \circ \tau_A$.
\end{lem}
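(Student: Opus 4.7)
The plan is to show that $f_{i,j}$ is constant on the fibres of $\tau_A$; since $\tau_A$ is surjective onto $A'$ by construction, this yields a well-defined $f'_{i,j}\colon A' \to \{W,T,L\}$ with $f_{i,j} = f'_{i,j}\circ\tau_A$.

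So suppose $a_1, a_2 \in A$ with $\tau_A(a_1) = \tau_A(a_2)$; the goal is $f_{i,j}(a_1) = f_{i,j}(a_2)$. I would do this by lifting $a_1$ and $a_2$ to elections $e_1, e_2 \in E$ which satisfy $\pi_{i,j}(e_l) = a_l$ and also $\tau(e_1) = \tau(e_2)$. Once such elections are available, anonymity gives $F(e_1) = F'(\tau(e_1)) = F'(\tau(e_2)) = F(e_2)$, and then applying $\pi_{i,j}$ and the MIIA-assumed $f_{i,j}$ to both sides yields $f_{i,j}(a_1) = f_{i,j}(a_2)$.

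The lift is straightforward. Since $\pi_{i,j}(\mathfrak{R}) = D_k$, for each $d \in D_k$ I can pick once and for all a ranking $r_d \in \mathfrak{R}$ with $r_d(c_i) - r_d(c_j) = d$. Then define $e_l(v) = r_{a_l(v)}$ for each $v \in V$ and $l \in \{1,2\}$. By construction $\pi_{i,j}(e_l) = a_l$. Moreover, for any $r \in \mathfrak{R}$,
\begin{equation}
\tau(e_l)(r) = \nu\bigl(e_l^{-1}(r)\bigr) = \sum_{d : r_d = r} \nu\bigl(a_l^{-1}(d)\bigr) = \sum_{d : r_d = r} \tau_A(a_l)(d),
\end{equation}
and since $\tau_A(a_1) = \tau_A(a_2)$ this depends only on the common value, giving $\tau(e_1) = \tau(e_2)$. (The finite case is identical with $|\cdot|$ in place of $\nu$.)

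The only non-routine point is confirming $e_l \in E$ in the infinite case, i.e.\ measurability. But $e_l$ is the composition of the measurable map $a_l\colon V \to D_k$ with the map $D_k \to \mathfrak{R}$, $d \mapsto r_d$, and both $D_k$ and $\mathfrak{R}$ carry the discrete $\sigma$-algebra, so measurability is automatic. So there is really no substantial obstacle here; the content of the lemma is the observation that the freedom in choosing a representative election allows us to make the $\tau$-profiles match whenever the $\tau_A$-profiles match, after which anonymity does the work.
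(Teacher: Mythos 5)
Your proof is correct. The paper itself does not prove this lemma --- it imports it from the earlier paper it cites --- but your argument (showing $f_{i,j}$ is constant on fibres of $\tau_A$ by choosing a canonical lift $d \mapsto r_d$ so that matching $\tau_A$-profiles yield matching $\tau$-profiles, then invoking anonymity) is exactly the natural one, and the measurability check is handled correctly. The only caveat is that the lemma is stated for an arbitrary ballot domain while your lift uses $\pi_{i,j}(\mathfrak{R}) = D_k$, i.e.\ the unrestricted domain; for a ballot domain with ballot set $\mathfrak{B} \subseteq \mathfrak{R}$ one should instead choose $r_d \in \mathfrak{B}$ for each $d \in \pi_{i,j}(\mathfrak{B})$, which works by the same reasoning and keeps the constructed elections inside $E$.
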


Again, $f_{i,j}$ and $f'_{i,j}$ uniquely determine one another for each $i \neq j$.

We define a partial order on $A'$ corresponding to the order we already have on $A$. We say that $\vec{\alpha} \geq \vec{\beta}$ if, for all $m \in D_k$, we have

\begin{equation}
\sum_{n\geq m} \alpha_n \geq \sum_{n\geq m} \beta_n
\end{equation}

In other words, $\vec{\alpha}$ majorises $\vec{\beta}$. Clearly if $a \geq b \in A$ then $\tau_A(a) \geq \tau_A(b)$.

For $|V|$ finite, we showed that anonymity is equivalent to the condition that any permutation of the voters fixes $F$. In \autocite{paper2} we introduced a weaker anonymity condition, called ``transitive anonymity'', requiring $F$ to be invariant under a group $G$ acting transitively on $V$, and studied SWFs with transitive anonymity and $k=3$. In a forthcoming paper we will return to this weaker version of anonymity in the case of $k>3$.

As well as symmetry between voters we expect symmetry between candidates. For a permutation $\rho: C \rightarrow C$ we can apply $\rho$ to $r \in \mathfrak{R}$ or to $\succ \in \mathfrak{R}_=$ in the natural way: we define $\rho(r) = r \circ \rho$ and $\rho(\succ)$ to be the unique weak ordering such that for all $c_i \neq c_j \in C$ with $\rho(c_i)=c_{i'}$ and $\rho(c_j)=c_{j'}$ we have $\pi_{i',j'}(\rho(\succ)) = \pi_{i,j}(\succ)$.

\begin{defn}[Neutrality - N]
A SWF satisfies neutrality if the candidates are treated symmetrically. That is to say, for any permutation $\rho: C \rightarrow C$ and for all $e$ such that $\rho \circ e \in E$ we have $F(\rho \circ e) = \rho(F(e))$.
\end{defn}

We proved in \autocite{paper1} that for a SWF satisfying MIIA, A and N, all the functions $f'_{i,j}$ are identical. There, the statement applied to SWFs on any separable domain; here we restate the Lemma for the specific case of the unrestricted domain.

\begin{lem}\label{namiia}
Let $F$ be a SWF satisfying U, MIIA, A and N. Then for any candidates $c_i \neq c_j$ and $c_{i'} \neq c_{j'}$, $f'_{i,j} = f'_{i',j'}$.
\end{lem}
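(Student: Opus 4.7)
My plan is to show $f'_{i,j}(\vec\alpha) = f'_{i',j'}(\vec\alpha)$ for every $\vec\alpha \in A'$, by exhibiting two elections whose relative profiles realise $\vec\alpha$ along the respective coordinate pairs and which are related by a candidate-permutation, so that neutrality equates their $F$-values along the relevant pair.

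Since $c_i \neq c_j$ and $c_{i'} \neq c_{j'}$, the partial map $c_i \mapsto c_{i'},\ c_j \mapsto c_{j'}$ is injective and therefore extends to a permutation $\rho: C \to C$. Given $\vec\alpha \in A'$, condition U supplies some $e \in E$ with $\tau_A(\pi_{i,j}(e)) = \vec\alpha$, so Lemma \ref{amiia} gives $f'_{i,j}(\vec\alpha) = \pi_{i,j}(F(e))$. Applying $\rho$ ballot-by-ballot to $e$ produces a relabelled election $e'$; the way $\rho$ was chosen forces $\pi_{i',j'}(e')$ to coincide pointwise with $\pi_{i,j}(e)$, whence $\tau_A(\pi_{i',j'}(e')) = \vec\alpha$ and $f'_{i',j'}(\vec\alpha) = \pi_{i',j'}(F(e'))$.

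Neutrality now gives $F(e') = \rho(F(e))$, and the defining property $\pi_{i',j'}(\rho(\succ)) = \pi_{i,j}(\succ)$ of the action on weak orderings (which holds precisely when $\rho(c_i) = c_{i'}$ and $\rho(c_j) = c_{j'}$) yields $\pi_{i',j'}(\rho(F(e))) = \pi_{i,j}(F(e))$. Chaining the three equalities gives $f'_{i',j'}(\vec\alpha) = \pi_{i,j}(F(e)) = f'_{i,j}(\vec\alpha)$, as required.

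The main obstacle is reconciling the two conventions for the action of $\rho$: on ballots by pre-composition ($\rho(r) = r \circ \rho$) and on weak orderings by the relabelling in Section \ref{section:conditions}. For $\rho$ an involution this is automatic, which handles the case where $\{c_i, c_j\}$ and $\{c_{i'}, c_{j'}\}$ are disjoint (take $\rho = (c_i\,c_{i'})(c_j\,c_{j'})$) or equal as unordered pairs; the remaining configurations, where the two unordered pairs overlap in exactly one element (e.g.\ $c_{i'} = c_j$), are reduced to the involution case by chaining through an intermediate pair $(c_{i''}, c_{j''})$ disjoint from both, which exists because $k > 3$.
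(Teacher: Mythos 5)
The paper does not actually prove this lemma here; it is imported verbatim from \autocite{paper1}, so your argument has to stand on its own. Its core is sound and is the natural one: realise $\vec{\alpha}$ along $(c_i,c_j)$ using U, transport the election by a candidate permutation, and use N together with the definition of $\rho(\succ)$ to equate the two pairwise outcomes. Your handling of the convention clash (ballots transform by pre-composition $\rho(r)=r\circ\rho$, orderings by push-forward) via involutions is the right fix: when $\rho^2=\mathrm{id}$ and $\rho(c_i)=c_{i'}$, $\rho(c_j)=c_{j'}$, one gets both $\pi_{i',j'}(\rho\circ e)=\pi_{i,j}(e)$ pointwise and $\pi_{i',j'}(\rho(F(e)))=\pi_{i,j}(F(e))$, which is exactly what the chain of equalities needs.

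The gap is in the final reduction. First, you under-count the involution cases: if the overlap is in the same coordinate position (say $c_{i'}=c_i$, $c_{j'}\neq c_j$), the single transposition $\rho=(c_j\,c_{j'})$ already satisfies $\rho(c_i)=c_{i'}$ and $\rho(c_j)=c_{j'}$ and is an involution, so no chaining is needed. The genuinely non-involutive case is the cross overlap, e.g.\ $c_{i'}=c_j$ with $c_{j'}\neq c_i$, and there your proposed intermediate pair ``disjoint from both'' does not exist when $k=4$: the two pairs span three candidates, leaving only one candidate outside them, so no disjoint pair can be formed; worse, on four candidates the only pair disjoint from $\{c_1,c_2\}$ is $\{c_3,c_4\}$ and vice versa, so no chain of mutually disjoint steps ever connects $\{c_1,c_2\}$ to $\{c_2,c_3\}$. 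Since $k=4$ is squarely in scope ($k>3$), this step fails as written. The repair is easy using the observation above: chain through same-position overlaps, e.g.\ $f'_{1,2}=f'_{4,2}$ via $(c_1\,c_4)$, then $f'_{4,2}=f'_{4,3}$ via $(c_2\,c_3)$, then $f'_{4,3}=f'_{2,3}$ via $(c_2\,c_4)$, each step being an involution. With that substitution (and it works for all $k\geq 4$, indeed for $k\geq 3$), your proof is complete.
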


We call this shared relative SWF $f'$.

Two more conditions on SWFs, Positive Responsiveness and the Pareto condition, describe senses in which the SWF should be ``increasing'' -- for each candidate, doing well in a voter's ballot should only benefit them in the final ranking.

\begin{defn}[Pareto - P]
A SWF satisfying MIIA also satisfies the Pareto condition if, whenever all voters prefer candidate $c_i$ to $c_j$, the same must be the case in the final result. Formally, for $a \in A_{i,j}$ with $a(v) > 0$ for all $v$, we have $f_{i,j}(a)=W$.
\end{defn}

\begin{defn}[Positive Responsiveness]
A SWF satisfying MIIA also satisfies the PR condition if, for any two candidates $c_i$ and $c_j$, and for any two relative elections $a_1, a_2 \in A_{i,j}$ where $a_1 \geq a_2$, we have $f_{i,j}(a_1) \geq f_{i,j}(a_2)$.
\end{defn}

We ignore the distinction between conditions PR and PRm from \autocite{paper1} because we always impose the U condition, so our domain is always increasing.

Finally, we recall the definition of consistency in the context of $3$-tuples of results from relative SWFs:

\begin{defn}[Consistent multisets]
A multiset of cardinality three, with elements drawn from $\set{W,T,L}$, is consistent if it is one of the following:
\begin{enumerate}
\item $\set{W,W,L}$
\item $\set{W,T,L}$
\item $\set{W,L,L}$
\item $\set{T,T,T}$
\end{enumerate}

Otherwise the multiset is ``inconsistent''.
\end{defn}

We extend the definition of consistency to functions $f': A' \rightarrow \set{W,T,L}$ as follows:

\begin{defn}[Consistent function]
A function $f': A' \rightarrow  \set{W,T,L}$ is consistent if for any weight-based election $\varepsilon \in E'$:
\begin{enumerate}
\item\label{consistent:twoway} for all $c_i \neq c_j$, $f' \circ \pi_{i,j}(\varepsilon) = -f'\circ \pi_{j,i}(\varepsilon)$
\item\label{consistent:threeway} for all $c_i \neq c_j \neq c_k$, $\set{f' \circ \pi_{i,j}(\varepsilon), f' \circ \pi_{j,k}(\varepsilon), f' \circ \pi_{k,i}(\varepsilon)}$ is a consistent multiset
\end{enumerate}
\end{defn}

This allows us to simplify the task of determining SWFs $F$ by searching instead for the corresponding weight-based relative SWFs, according to the following Lemma:

\begin{lem}\label{Ftofprime}
Given a SWF $F$ satisfying U, N, A and MIIA, the function $f'$ given by Lemma \ref{namiia} is consistent.
\end{lem}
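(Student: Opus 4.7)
The strategy is to chain the definitions of MIIA, A and N so that $f' \circ \pi_{i,j}(\varepsilon)$ is identified with $\pi_{i,j}(F(e))$ for any lift $e \in \tau^{-1}(\varepsilon)$; both consistency conditions then reduce to intrinsic properties of the weak ordering $F(e)$.

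Fix $\varepsilon \in E'$ and pick $e \in E$ with $\tau(e) = \varepsilon$ (such $e$ exists since $E' = \Image(\tau)$). Applying Lemmas \ref{amiia} and \ref{namiia} in succession gives
\[
f' \circ \pi_{i,j}(\varepsilon) \;=\; f'_{i,j}\bigl(\tau_A(\pi_{i,j}(e))\bigr) \;=\; f_{i,j}(\pi_{i,j}(e)) \;=\; \pi_{i,j}(F(e)).
\]
Writing $\succ := F(e)$, condition \ref{consistent:twoway} becomes the identity $\pi_{j,i}(\succ) = -\pi_{i,j}(\succ)$, which is immediate from the conventions $W = -L$ and $T = -T$ built into the action of $\pi_{i,j}$ on $\mathfrak{R}_=$.

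For condition \ref{consistent:threeway}, I would split into cases according to the restriction of $\succ$ to $\set{c_i, c_j, c_k}$, itself a weak ordering of a three-element set. There are three shapes: all three tied gives the triple $\set{T,T,T}$; exactly two tied gives $\set{W,T,L}$ regardless of which pair is tied and which side of the tie the third candidate lies on; and a strict ordering gives either $\set{W,W,L}$ or $\set{W,L,L}$, by a direct check on the six permutations. These exhaust the four consistent multisets in the definition, with the inconsistent ones such as $\set{W,W,W}$ or $\set{W,W,T}$ ruled out by the transitivity inherent in $\succ$.

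There is no substantive obstacle — the argument is essentially notational unwinding plus a small finite case check on weak orderings of three elements. The only subtlety worth noting is that $\pi_{i,j}$ was defined on $E$ rather than $E'$, so $\pi_{i,j}(\varepsilon)$ must be interpreted via the chosen lift $e$; well-definedness follows from the chain above, which expresses the result purely in terms of $F(e)$ and hence, by anonymity, only in terms of $\tau(e) = \varepsilon$.
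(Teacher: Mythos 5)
Your proof is correct and follows essentially the same route as the paper's: identify $f'\circ\pi_{i,j}(\varepsilon)$ with $\pi_{i,j}(F(e))$ for a lift $e$ of $\varepsilon$, then read off both consistency conditions from the fact that $F(e)$ is a weak ordering (the paper defers the three-element case check to Lemma 2.22 of the earlier paper, which you instead sketch directly). No gaps.
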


Note that the converse of this statement is immediate: given a consistent function $f'$ we can construct $F$ simply by defining $F(e)$ to be the unique ranking in $\mathfrak{R}_=$ to accord with all pairwise rankings given by $f'(\tau_A(\pi_{i,j}(e)))$.

\begin{proof}
Take any $\varepsilon \in E'$, and take any $e \in E$ with $\tau(e)=\varepsilon$. Then for all $c_i$ and $c_j$ we have

\begin{equation}
\begin{aligned}
f' \circ \pi_{i,j}(\varepsilon) &= f' \circ \pi_{i,j} \circ \tau(e) \\
&= f' \circ \tau_A \circ \pi_{i,j}(e) \\
&= f \circ \pi_{i,j}(e) \\
&= \pi_{i,j} \circ F(e) 
\end{aligned}
\end{equation}

Therefore, the conditions for $f'$ being consistent are as follows. Condition \ref{consistent:twoway} states that for $c_i \neq c_j$ we have $\pi_{i,j}(\succ) = - \pi_{j,i}(\succ)$, which is trivially true; condition \ref{consistent:threeway} states that for any $c_i \neq c_j \neq c_k$ we have $\set{\pi_{i,j}(\succ), \pi_{j,k}(\succ), \pi_{k,i}(\succ)}$ a consistent multiset; and again this follows immediately from the definition of a weak total order (for a careful examination of the cases, see the proof of Lemma 2.22 in \autocite{paper1}).
\end{proof}

Finally we recall the Borda rules. These are SWFs in which each voter awards points to candidates equal to a scalar multiple of their placement in that voter's ranking. In \autocite{paper1} and \autocite{paper2} we addressed weighted Borda rules, in which different voters had different corresponding scalars. In this paper, all of our results will establish that under certain conditions, the only legal SWFs are the three unweighted Borda rules:

\begin{defn}[(Unweighted) Borda rule]
For $w=\pm1$ or $w=0$, the unweighted Borda rule $B_w$ is defined as follows. We define

\begin{equation}
\begin{aligned}
b_w(c_i,e) &= w\int_V e(v)(c_i)\diff \nu(v) \quad & |V| = \infty \\
b_w(c_i,e) &= w\sum_{v \in V} e(v)(c_i) \quad & |V| < \infty
\end{aligned}
\end{equation}

Now we let $B_w(e)$ be the unique weak total ordering corresponding to the ordering of the values $b_w(c_i,e)$; in other words, for all $i$ and $j$ we have $c_i \succ c_j$ if and only if $b_w(c_i,e) > b_w(c_j,e)$.
\end{defn}

We call $B_1$ the positive unweighted Borda rule; $B_{-1}$ is the negative unweighted Borda rule, and $B_0$ is the tie rule (because $b_0(c_i,e)=0$ for all $c_i$ and $e$, so $B_0$ always gives a $k$-way tie between all candidates).

We also define $d_w(c_i,c_j,e) = b_w(c_i,e)-b_w(c_j,e)$, and note that

\begin{equation}\label{eq:relativeborda}
\begin{aligned}
d_w(c_i,c_j,e) &= b_w(c_i,e)-b_w(c_j,e) \\
&= w\int_V e(v)(c_i)-e(v)(c_j) \diff \nu(v) \\
&= w\int_V \pi_{i,j}(e)(v) \diff \nu(v) \quad |V| = \infty
\end{aligned}
\end{equation}

The equivalent for $|V|$ finite is

\begin{equation}
d_w(c_i,c_j,e) = w\sum_{v \in V} \pi_{i,j}(e)(v)
\end{equation}

Thus $d_w$ is a function of $\pi_{i,j}(e)$, and we can write $d_w(c_i,c_j,e) = d_w(\pi_{i,j}(e))$. In fact, writing $\alpha = \tau_A(a)$, then

\begin{equation}\label{eq:relativeweightbasedborda}
\begin{aligned}
d_w(a) &= w\int_V a(v) \diff \nu(v) \\
&= w\sum_{d \in D_k} \int_{v \in a^{-1}(d)} d \diff \nu(v) \\
&= w\sum_{d \in D_k} d \alpha_d
\end{aligned}
\end{equation}

Consequently we can treat $d_w$ as a function on values $\alpha$ in $A'$. The results in this paper establish when a SWF $F$ must be unweighted Borda rules; we do this by demonstrating that the corresponding function $f'$ can be factored as $f' = g \circ d_w$, and then that the sign of $d_w$ is enough to determine $f'$.

\begin{defn}[Borda-determined]
We say that a SWF $F$ satisfying U, A, N and MIIA is Borda-determined if, for $f'$ corresponding to $F$ as defined in Lemma \ref{namiia}, we can write $f' = g \circ d_1$.
\end{defn}

We let $\varphi: \mathbbm{R} \rightarrow \set{W,T,L}$ be defined by

\begin{equation}
\varphi(d) = \begin{cases}
W & d>0 \\
T & d=0 \\
L & d<0
\end{cases}
\end{equation}

Then $B_1$ is Borda-determined with $g=\varphi$. Similarly $B_{-1}$ is Borda-determined with $g = -\varphi$, and $B_0$ is Borda-determined with $g\equiv T$. Clearly each $g$ uniquely determines a Borda-determined SWF $F$.

\section{Tools for evaluating relative weight-based SWFs}\label{section:tools}

Repeatedly in this paper, we will be presented with a relative weight-based SWF $f'$, and we will want to evaluate it for as many vectors $\vec{\alpha} \in A'$ as possible. In order to do so efficiently, we will define a structure called a transitive election table, which will allow us to exploit the consistency condition for $f'$ given in Lemma \ref{Ftofprime} to find vectors $\vec{\alpha}$ for which $f'(\vec{\alpha})$ must evaluate to $T$, or given a vector with $f'(\vec{\alpha})=X$, to find a related vector $\vec{\alpha}_1$ for which $f'(\vec{\alpha})=X$ as well. All of these results rely on the fact that we are in the unrestricted domain, but they apply in both the case of $V$ finite and of $V$ infinite. For brevity we define the measure $\nu$ on $V$ finite to be the counting measure (note that this means $\nu(V)=1$ in the infinite case but $\nu(V)=n$ in the case of $n$ voters).

We begin with the following intuitive lemma:

\begin{lem}\label{constructingthreewayvotes}
For $d_1, d_2 \in D_k$ with $d_1+d_2 \in D_k$, there exists $r=r(d_1,d_2) \in \mathfrak{R}$ such that $\pi_{1,2}(r) = d_1$ and $\pi_{2,3}(r) = d_2$ (and therefore $\pi_{1,3}(r) = d_1+d_2$).
\end{lem}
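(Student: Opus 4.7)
The plan is to build $r$ explicitly: first choose an integer $m = r(c_2)$, then force $r(c_1) = m + d_1$ and $r(c_3) = m - d_2$, and finally fill in the remaining $k-3$ values of $\{0,1,\dots,k-1\}$ arbitrarily on $c_4,\dots,c_k$. The two things I must check are that (i) the three forced values $m+d_1$, $m$, $m-d_2$ actually lie in $\{0,1,\dots,k-1\}$, and (ii) they are pairwise distinct (so that, together with the remaining $k-3$ values, they form a bijection).

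Condition (ii) is immediate from the hypotheses: the three differences among $r(c_1), r(c_2), r(c_3)$ are $d_1, d_2, d_1+d_2$, each of which is assumed to lie in $D_k$, and hence is nonzero.

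Condition (i) reduces to finding an integer $m$ satisfying all three of $0\le m\le k-1$, $0\le m+d_1\le k-1$, and $0\le m-d_2\le k-1$. Equivalently, I need
\begin{equation}
\max(0,\, -d_1,\, d_2) \;\le\; m \;\le\; \min(k-1,\, k-1-d_1,\, k-1+d_2).
\end{equation}
The existence of such an integer $m$ is a finite check of nine inequalities between the three lower bounds and the three upper bounds. The non-trivial ones all follow directly from the three membership assumptions $d_1, d_2, d_1+d_2 \in D_k = \{-(k-1),\dots,-1,1,\dots,k-1\}$: for instance, $d_2 \le k-1-d_1$ is $d_1+d_2 \le k-1$, and $-d_1 \le k-1+d_2$ is $-(d_1+d_2) \le k-1$. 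The remaining inequalities are either trivial or are of the form $d_i \le k-1$ or $-d_i \le k-1$.

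There is no real obstacle here; the only thing to take care with is the bookkeeping of the nine inequalities. Once $m$ is chosen in the non-empty range, the assignment $r(c_2) = m$, $r(c_1) = m+d_1$, $r(c_3) = m - d_2$ uses three distinct values in $\{0,\dots,k-1\}$, and $r$ is extended to a bijection by assigning the remaining $k-3$ values to $c_4,\dots,c_k$ in any order. By construction $\pi_{1,2}(r) = d_1$ and $\pi_{2,3}(r) = d_2$, and $\pi_{1,3}(r)=d_1+d_2$ follows automatically.
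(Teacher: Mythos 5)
Your proposal is correct and is essentially the paper's own argument: both construct the three values at relative offsets $d_1$ and $-d_2$ from $r(c_2)$, use the nonvanishing of $d_1,d_2,d_1+d_2$ for injectivity, and use $|d_1|,|d_2|,|d_1+d_2|\le k-1$ to shift the triple into $\{0,\dots,k-1\}$ (the paper fixes the shift by making the maximum equal $k-1$, whereas you verify the interval of admissible shifts is nonempty, which is the same computation). Your version is slightly more explicit about filling in the remaining $k-3$ candidates, which the paper leaves implicit.
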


\begin{proof}
We define a function $g$ taking $1$ to $0$, $2$ to $d_1$ and $3$ to $d_1+d_2$. $g$ is injective as $d_1, d_2, d_1+d_2 \neq 0$. Now let the maximum value taken by this function be $M \in D_k \cup \set{0}$. We now define $g' = g + k-1 - M$. Clearly for all $x$ we have $g'(x) \leq k-1$. Moreover, the differences between pairs of values of $g$ are $|d_1|$, $|d_2|$ and $|d_1+d_2|$ which are all in $D_k$ so less than $k-1$; thus $g'(x) >0$ for all $x$. Therefore we can construct $r \in \mathfrak{R}$ with $r(c_i) = g'(i)$ for $i=1,2,3$, and this satisfies the equations required.
\end{proof}

This lemma describes how two pairwise results can be chained together to give a third, in the context of one voter. Next, we extend this by considering under which circumstances two full relative elections can be chained together to form a third.

\begin{lem}\label{constructingthreewayelections}
For vectors $\vec{\alpha}_1, \vec{\alpha}_2 \in A'$, if there exist $\vec{a}_1, \vec{a}_2 \in A$ such that $\tau_A(\vec{a}_i)=\vec{\alpha}_i$ respectively, and $\vec{a}_1 + \vec{a}_2 \in A$, then we can construct a weight-based election $\vec{\varepsilon}\in E'$ for which $\pi_{i,i+1}(\varepsilon) = \vec{\alpha}_i$ for $i=1,2$.
\end{lem}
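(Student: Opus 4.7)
The plan is to lift the pointwise construction of Lemma \ref{constructingthreewayvotes} to the whole electorate. By hypothesis we are given specific $\vec{a}_1, \vec{a}_2 \in A$ witnessing $\vec{\alpha}_1, \vec{\alpha}_2$, with $\vec{a}_1 + \vec{a}_2 \in A$. In particular, for each voter $v$, the three values $a_1(v)$, $a_2(v)$, and $a_1(v)+a_2(v)$ all lie in $D_k$ (in particular all are nonzero, since $0 \notin D_k$), so the hypotheses of Lemma \ref{constructingthreewayvotes} are met at every $v$.

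First I would fix, once and for all, a canonical choice function: for each admissible pair $(d_1,d_2) \in D_k \times D_k$ with $d_1+d_2 \in D_k$, pick a single $r(d_1,d_2) \in \mathfrak{R}$ produced by Lemma \ref{constructingthreewayvotes}, extended to a full ballot on $c_4,\dots,c_k$ in any deterministic way (for instance by placing those candidates into the unused positions in increasing order of index). There are only finitely many admissible pairs, so this is a finite list of choices. Then I would define $e: V \to \mathfrak{R}$ by $e(v) = r(a_1(v), a_2(v))$, and set $\vec{\varepsilon} = \tau(e)$.

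The two things to verify are measurability of $e$ and the equations $\pi_{i,i+1}(\vec{\varepsilon})=\vec{\alpha}_i$. Measurability is the only place any care is required, but it is easy: since $\mathfrak{R}$ carries the discrete $\sigma$-algebra, we need each $e^{-1}(r)$ to be measurable, and
\begin{equation}
e^{-1}(r) = \bigcup_{\substack{(d_1,d_2) \\ r(d_1,d_2)=r}} a_1^{-1}(d_1) \cap a_2^{-1}(d_2),
\end{equation}
which is a finite union of intersections of measurable sets since $\vec{a}_1, \vec{a}_2 \in A$. For the equations, by construction $\pi_{1,2}(e(v)) = a_1(v)$ and $\pi_{2,3}(e(v)) = a_2(v)$, so $\pi_{1,2}(e) = \vec{a}_1$ and $\pi_{2,3}(e) = \vec{a}_2$ pointwise. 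Applying $\tau_A$ gives $\tau_A(\pi_{i,i+1}(e)) = \tau_A(\vec{a}_i) = \vec{\alpha}_i$, which under the natural identification $\pi_{i,j}(\vec{\varepsilon}) := \tau_A(\pi_{i,j}(e))$ (well-defined by Lemma \ref{amiia}) gives exactly what is claimed.

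I do not expect any genuine obstacle: the finite case is almost immediate, and the only subtle point in the infinite case is that the lifting $a \mapsto r(a_1(v),a_2(v))$ must be done in a uniform way across $V$ so that measurability is preserved, which is handled by fixing the canonical extension once per admissible pair rather than per voter.
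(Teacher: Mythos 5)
Your proposal is correct and takes essentially the same route as the paper: both assign to the ballot $r(d_1,d_2)$ from Lemma \ref{constructingthreewayvotes} the measure of the set $a_1^{-1}(d_1)\cap a_2^{-1}(d_2)$, the only difference being that you construct the underlying election $e\in E$ and verify its measurability explicitly before applying $\tau$, whereas the paper defines the weight vector $\vec{\varepsilon}$ directly. This is a harmless (indeed slightly more careful) elaboration of the same argument.
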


\begin{proof}
For each $(d_1,d_2) \in D_k^2$ with $d_1+d_2 \in D_k$, we let

\begin{equation}
\begin{aligned}
\varepsilon_{r(d_1,d_2)} = \nu(\set{v | (d_1,d_2) = (\vec{a}_1(v),\vec{a}_2(v))})
\end{aligned}
\end{equation}

Then $\varepsilon \in \mathbb{R}^{\mathfrak{R}}$, and $\sum_r \varepsilon_r = 1$. By considering the set of $r \in \mathfrak{R}$ for which $\pi_{1,2}(r)=d_1$ we observe that $\pi_{1,2}(\varepsilon)(d_1) = \alpha_1(d_1)$, and similarly by considering the set of $r \in \mathfrak{R}$ for which $\pi_{2,3}(r)=d_2$ we observe that $\pi_{2,3}(\varepsilon)(d_2) = \alpha_2(d_2)$, as required.
\end{proof}

Applying $f'$ to these vectors $\vec{\alpha}_1$ and $\vec{\alpha}_2$ gives information about $f(\tau_A(\vec{a}_1+\vec{a}_2))$:

\begin{lem}\label{transitivethreeway}
Let $\vec{\alpha}_1$ and $\vec{\alpha}_2$ satisfy the condition of Lemma \ref{constructingthreewayelections}, and let $\vec{\alpha}_3 = \tau_A(\vec{a}_1+\vec{a}_2)$. Suppose that for some $X \in \set{W,T,L}$ we have $\set{X} \subset \set{f(\vec{\alpha}_1), f(\vec{\alpha}_2)} \subset \set{X,T}$. Then $f(\vec{\alpha}_3) = X$.
\end{lem}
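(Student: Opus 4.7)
The plan is to realise $\vec{\alpha}_1$, $\vec{\alpha}_2$, and $\vec{\alpha}_3$ simultaneously as the three pairwise projections of a single weight-based election, and then read off $f'(\vec{\alpha}_3)$ from the consistency of $f'$.

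First I would apply Lemma \ref{constructingthreewayelections} to the witnesses $\vec{a}_1, \vec{a}_2$ to produce $\vec{\varepsilon} \in E'$ with $\pi_{1,2}(\varepsilon) = \vec{\alpha}_1$ and $\pi_{2,3}(\varepsilon) = \vec{\alpha}_2$. The key additional observation is that the same construction also realises $\vec{\alpha}_3$ as $\pi_{1,3}(\varepsilon)$: the weight placed on the ballot $r(d_1,d_2)$ is $\nu(\set{v \mid \vec{a}_1(v)=d_1,\,\vec{a}_2(v)=d_2})$, Lemma \ref{constructingthreewayvotes} guarantees $\pi_{1,3}(r(d_1,d_2))=d_1+d_2$, and summing these weights over fibres of $d_1+d_2$ reconstructs $\tau_A(\vec{a}_1+\vec{a}_2)=\vec{\alpha}_3$.

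Next I would invoke Lemma \ref{Ftofprime}, so that $f'$ is consistent on $\vec{\varepsilon}$. The two-way part (condition \ref{consistent:twoway}) gives $f'(\pi_{3,1}(\varepsilon))=-f'(\vec{\alpha}_3)$, and the three-way part (condition \ref{consistent:threeway}) applied to the triple $(c_1,c_2,c_3)$ forces
\begin{equation*}
\set{f'(\vec{\alpha}_1),\ f'(\vec{\alpha}_2),\ -f'(\vec{\alpha}_3)}
\end{equation*}
to be one of the four consistent multisets $\set{W,W,L}$, $\set{W,T,L}$, $\set{W,L,L}$, $\set{T,T,T}$.

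The proof then finishes with a brief case check on $X$. When $X=T$ the hypothesis makes the first two entries both $T$, leaving only $\set{T,T,T}$ and hence $f'(\vec{\alpha}_3)=T$. When $X=W$ the first two entries lie in $\set{W,T}$ with at least one $W$, matching only $\set{W,W,L}$ or $\set{W,T,L}$, both of which force $-f'(\vec{\alpha}_3)=L$ and therefore $f'(\vec{\alpha}_3)=W$; the $X=L$ case is symmetric. I do not expect any genuine obstacle here: the only step where a little care is needed is the identification $\pi_{1,3}(\varepsilon)=\vec{\alpha}_3$, which is precisely where the chaining guaranteed by Lemma \ref{constructingthreewayvotes} is doing the work.
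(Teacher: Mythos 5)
Your proposal is correct and follows essentially the same route as the paper: construct the joint election via Lemma \ref{constructingthreewayelections}, note that its third pairwise projection realises $\vec{\alpha}_3$ (up to the reversal $\varrho$), and then apply the consistency of $f'$ from Lemma \ref{Ftofprime} with the same case check on the consistent multisets. Your explicit verification that the weights on the ballots $r(d_1,d_2)$ sum correctly over fibres of $d_1+d_2$ to give $\pi_{1,3}(\varepsilon)=\vec{\alpha}_3$ is a detail the paper leaves implicit, and is a worthwhile addition.
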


\begin{proof}
Lemma \ref{constructingthreewayelections} tells us that there exists an election $\vec{\varepsilon} \in E'$ for which $\pi_{1,2}(\varepsilon)$, $\pi_{2,3}(\varepsilon)$ and $\pi_{3,1}(\varepsilon)$ are $\vec{a}_1$, $\vec{a}_2$ and $-\vec{a}_3$ respectively. Then $f'(\vec{a}_1),f'(\vec{a}_2),f'(-\vec{a}_3)$ form a consistent multiset. If the first two values are $T$ then the third must be $T$; if the first two are a $W$ and a $T$ or two $W$s then then third must be $L$, and vice versa.
\end{proof}

Now we will define a structure allowing us to repeatedly apply Lemma \ref{transitivethreeway} to compute $f'$ on more vectors in $A'$. In preparation for this, we define $\vec{\alpha}(w,m)$ for $m \in D_k^t$ and $w \in \mathbbm{R}_{\geq 0}^t$ such that $\sum_i w_i = 1$ and, in the case of $V$ finite, $w_i \in \mathbbm{Z}$. Then $\vec{\alpha}(w,m) \in A'$ is given by

\begin{equation}
\begin{aligned}
\alpha_d = \sum_{j: m_j = d} w_j
\end{aligned}
\end{equation}

In other words, each pair $w_i$ and $m_i$ tell us that $w_i$ voters have cast the relative vote $m_i$. We are now ready to define a Transitive Election Table:

\begin{defn}[Transitive Election Tables]
An $X$-transitive election table for $X \in \set{W,T,L}$ is a triple $(w,P,M)$ where:
\begin{itemize}
\item $w$ is a vector of $t$ non-negative values $w_1, w_2, \dots w_t$ with $\sum_i w_i = \nu(V)$; and for $V$ finite, $w_i \in \mathbbm{Z}$
\item $P$ describes the order in which a sequence of $m$ atoms $\tuple{x_1, \dots, x_m}$ are combined using a binary operation; it is written using parentheses, where any lowest-level pair of parentheses contains two factors, and can inductively be replaced by a new atom. So, for example, $x_1(x_2x_3)$ and $(x_1x_2)x_3$ are the two possible $P$ for $m=3$.
\item $M$ is a matrix in $D_k^{m \times t}$
\end{itemize}
satisfying the following properties:
\begin{itemize}
\item for each row of $M$, labelled $M_i \in D_k^t$, we have $f'(\vec{\alpha}(w,M_i)) \in \set{X, T}$
\item for at least one row of $M$ we have $f(\vec{\alpha}(w,M_i)) = X$;
\item for any pair of matching parentheses in $P$, containing the factors $x_i$ through to $x_j$, we have
\begin{equation}\begin{aligned}
\sum_{s=i}^j M_s \in D_k^t
\end{aligned}\end{equation}
\item we also have
\begin{equation}\begin{aligned}
M_0 = \sum_{s=1}^m M_s \in D_k^n \cup \set{\vec{0}}
\end{aligned}\end{equation}
\end{itemize}
\end{defn}

The following corollaries of Lemma \ref{transitivethreeway} follow immediately from applying the Lemma iteratively to parenthesised sets of rows in a Transitive Election Table:

\begin{cor}\label{transitiveelectiontable}
For an $X$-transitive election table $(w,M,P)$, we have $f'(\vec{\alpha}(w, M_0))= X$ if $M_0$ is in $D_k^n$.
\end{cor}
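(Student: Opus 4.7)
The plan is to induct on the depth of the parenthesization $P$, proving the slightly stronger statement that for every matched pair of parentheses enclosing rows $i, \ldots, j$, writing $S = \sum_{s=i}^{j} M_s$ (which lies in $D_k^t$ by hypothesis), we have $f'(\vec{\alpha}(w, S)) \in \set{X, T}$, and moreover $f'(\vec{\alpha}(w, S)) = X$ whenever at least one of the constituent rows $M_s$ satisfies $f'(\vec{\alpha}(w, M_s)) = X$. Applied to the outermost expression this immediately yields the corollary, since the definition of an $X$-transitive election table requires at least one row to evaluate to $X$.

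For the base case a single row is covered directly by the defining conditions on $M$. For the inductive step the outermost parenthesis of the current subexpression splits into left and right factors with column-sums $L$ and $R$; by the parenthesization condition $L$, $R$ and $L+R = S$ all lie in $D_k^t$. To meet the hypothesis of Lemma \ref{constructingthreewayelections}, partition $V$ into pieces $V_1, \ldots, V_t$ of measures $w_1, \ldots, w_t$ (via Sierpinski \autocite{sierpinski} when $V$ is infinite, or by selecting $w_l$ distinct voters when $V$ is finite), and define $\vec{a}_L, \vec{a}_R \in A$ by $\vec{a}_L(v) = L_l$ and $\vec{a}_R(v) = R_l$ for $v \in V_l$. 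Then $\tau_A(\vec{a}_L) = \vec{\alpha}(w, L)$ and $\tau_A(\vec{a}_R) = \vec{\alpha}(w, R)$, and since $L_l + R_l \in D_k$ for every $l$ we also have $\vec{a}_L + \vec{a}_R \in A$ with $\tau_A(\vec{a}_L + \vec{a}_R) = \vec{\alpha}(w, S)$.

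By induction, $f'(\vec{\alpha}(w, L))$ and $f'(\vec{\alpha}(w, R))$ both lie in $\set{X, T}$. If at least one equals $X$, Lemma \ref{transitivethreeway} applied to the election produced by Lemma \ref{constructingthreewayelections} gives $f'(\vec{\alpha}(w, S)) = X$. If both equal $T$, the consistency condition \ref{consistent:threeway} forces the multiset $\set{T, T, f'(-\vec{\alpha}(w, S))}$ to be consistent; the only consistent multiset containing two $T$s is $\set{T,T,T}$, so $f'(\vec{\alpha}(w, S)) = T$. In this second case no constituent row contributes an $X$, so the strengthened inductive claim is preserved.

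The only delicate bookkeeping is verifying at every combination step that the columnwise sums stay inside $D_k$, which is exactly what the parenthesization conditions in the definition of a Transitive Election Table were written to guarantee; beyond that, the argument is a routine structural induction on $P$.
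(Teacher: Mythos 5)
Your proof is correct and follows exactly the route the paper intends: the paper asserts the corollary ``follows immediately from applying Lemma \ref{transitivethreeway} iteratively to parenthesised sets of rows,'' and your structural induction on $P$, with the strengthened invariant that each parenthesised sum evaluates to something in $\set{X,T}$ and to $X$ once a constituent row does, is precisely that iteration made explicit. The construction of the underlying relative elections via Sierpinski's theorem (or voter selection in the finite case) correctly discharges the hypothesis of Lemma \ref{constructingthreewayelections}, so nothing is missing.
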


\begin{cor}\label{tyingelectiontable}
For an $X$-transitive election table $(w,M,P)$ with $M_0 = \vec{0}$ we have $X=T$.
\end{cor}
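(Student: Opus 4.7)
The plan is to unpack what happens at the root of the parenthesisation $P$. Iteratively applying Lemma \ref{transitivethreeway} up each of the two subtrees of the root (which is legitimate because the parenthesisation condition forces every proper sub-parenthesised sum to land in $D_k^t$, hence in $A'$ via $\vec{\alpha}(w, \cdot)$) tells us that for each subtree of the root the value of $f'$ on the subtree's sum lies in $\set{X,T}$, and equals $X$ precisely when at least one leaf of the subtree attains $X$. At the root itself, however, the sum is $\vec{0}$, so Lemma \ref{transitivethreeway} no longer applies directly: the pointwise-sum relative election would take the value $0 \notin D_k$ on every voter, so one cannot construct the three-way election of Lemma \ref{constructingthreewayelections}. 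The trick will be to replace the Lemma at this final step with consistency condition \ref{consistent:twoway} applied to the two subtrees.

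Concretely, let $N_1, N_2 \in D_k^t$ be the sums over the two immediate children of the root. By the argument above, $f'(\vec{\alpha}(w, N_1)), f'(\vec{\alpha}(w, N_2)) \in \set{X, T}$. Since the definition of an $X$-transitive election table forces at least one row of $M$ to give value $X$, that row sits in one of the two subtrees; say (without loss of generality) in the first. Then $f'(\vec{\alpha}(w, N_1)) = X$.

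The last step is to observe that $N_1 + N_2 = M_0 = \vec{0}$ gives $N_2 = -N_1$, and therefore
\begin{equation}
\vec{\alpha}(w, N_2)_d = \sum_{j\,:\,N_{2,j}=d} w_j = \sum_{j\,:\,N_{1,j}=-d} w_j = \vec{\alpha}(w, N_1)_{-d} = \varrho\bigl(\vec{\alpha}(w, N_1)\bigr)_d.
\end{equation}
Consistency condition \ref{consistent:twoway} (via $\varrho \circ \tau_A \circ \pi_{i,j} = \tau_A \circ \pi_{j,i}$) then yields $f'(\vec{\alpha}(w,N_2)) = -f'(\vec{\alpha}(w,N_1)) = -X$. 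But we already know $f'(\vec{\alpha}(w,N_2)) \in \set{X,T}$, so $-X \in \set{X,T}$, which is only possible when $X = T$.

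The only real obstacle is conceptual: recognising that the Lemma \ref{transitivethreeway} machinery genuinely breaks down at the root when the totals cancel, and that this breakdown is precisely what consistency of $f'$ under $\varrho$ was designed to handle. Once that is seen, the calculation collapses into the three short steps above.
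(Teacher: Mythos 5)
Your proof is correct and follows the route the paper intends: the paper asserts this corollary follows ``immediately'' from iterating Lemma \ref{transitivethreeway} over the parenthesisation $P$, and your argument supplies exactly that iteration together with the one genuinely non-immediate step, namely replacing the failed application at the root (where the column sums cancel and so leave $D_k^t$) with the antisymmetry $f'(\varrho(\vec{\alpha})) = -f'(\vec{\alpha})$ coming from consistency condition \ref{consistent:twoway}. No gaps.
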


We call this a ``tying election table''

We will write $w$ and $M$ as one table, where the top row gives the values of $w$ in parentheses, and subsequent rows are the rows of $M$, with a horizontal line after the $w$ row. Sometimes several entries in $w$ will be equal, in which case we will tend to show them adjacent to one another and only write the value of $w$ once, with vertical lines demarcating the columns for which that value of $w$ applies. So for example, a vector corresponding to $(k,0, k, 0, 2k, 0) \in A_4$ might be written as

\begin{equation}\begin{aligned}
\left(\begin{array}{c c | c}
(k) & & (2k) \\ \hline
3 & 1 & -2
\end{array}\right)
\end{aligned}\end{equation}

Sometimes a transitive election table will have $w = \mathbbm{1}$, in which case we will omit it altogether. We will occasionally transpose the table for readability if $t$ is large. Finally, note that we need not define $P$ for a transitive election table with $m=2$ or for a tying election table with $m=3$.

\section{Summary of results}

In section \ref{InfUAk} we deal with the case of an infinite set of voters. In this context we prove Theorem \ref{IUAk}, which states that under the U, MIIA, A and N conditions, our SWF must be Borda-determined. This theorem, combined with a measurability condition, gives that $F$ is Borda; this is established in Theorem \ref{measurable}.

Measurability for an anonymous SWF is defined in \autocite{paper1} as follows:

\begin{defn}[Measurability for anonymous SWFs]\label{def:measurable}
An anonymous SWF $F = F' \circ \tau$ is said to be measurable if $F'$ is a measurable function from $\set{\vec{\varepsilon} \in \mathbbm{R}_{\geq 0}^{\mathfrak{R}} | \sum_r \varepsilon_r = 1}$, equipped with the Lebesgue measure, onto $\mathfrak{R}_=$ equipped with the discrete measure.
\end{defn}

Measurability is implied by either P or PR, so we get Corollary \ref{IUAkMaskin}, which states that requiring either ``increasingness'' condition leaves only the unweighted Borda rules (the tie rule in the case of PR, and the positive Borda rule in the case of either P or PR). On the other hand, without the measurability condition, pathological counterexamples exist; we give these in Theorem \ref{IUAkchoice}.

In section \ref{FinUAk} we address the case of a finite set of voters $V$ with $|V|=n$. Here, there are no complications around measurability and the axiom of choice, and we simply prove Theorem \ref{FUAk}, which states that given U, MIIA, A and N, the SWF must be Borda.

\section{The unrestricted domain for more than three candidates and infinite voters}\label{InfUAk}

After identifying non-Borda elections on the unrestricted domain for three candidates in \autocite{paper1}, we saw that restricting to the Condorcet Cycle domain forces the election to be Borda (subject to measurability conditions), as the elections on which the result disagrees with the Borda rule are excluded from the domain of the SWF. Another way in which we can attempt to force SWFs to be the Borda rule is by \emph{increasing} the domain of the election, such that non-Borda SWFs on the smaller domain cannot be extended to the larger domain without breaking other conditions such as MIIA. It turns out that this occurs as soon as we add any number of candidates to the election.

\begin{thm}\label{IUAk}
Let $F$ be a SWF with voters $V$ and candidates $C$, satisfying U, MIIA, A and N. If $V$ is an infinite set and $|C|>3$ then $F$ is Borda-determined.
\end{thm}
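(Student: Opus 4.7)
By Lemmas \ref{namiia} and \ref{Ftofprime}, the SWF $F$ is encoded by a single consistent weight-based relative SWF $f': A' \to \set{W, T, L}$. Showing $F$ is Borda-determined amounts to producing a function $g$ with $f' = g \circ d_1$; equivalently, $f'(\vec{\alpha}_1) = f'(\vec{\alpha}_2)$ whenever $d_1(\vec{\alpha}_1) = d_1(\vec{\alpha}_2)$. The plan is to establish this in two stages using the transitive election table machinery of Section \ref{section:tools}.

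Stage one shows $f'(\vec{\alpha}) = T$ whenever $d_1(\vec{\alpha}) = 0$. The base case is a symmetric $\vec{\alpha}$ with $\alpha_d = \alpha_{-d}$: then $\varrho(\vec{\alpha}) = \vec{\alpha}$, and the antisymmetry clause of the consistency of $f'$ gives $f'(\vec{\alpha}) = -f'(\vec{\alpha})$, forcing $T$. For a general $\vec{\alpha}$ with $d_1 = 0$, I would find an auxiliary symmetric $\vec{\beta}$ and a coupling of representatives so that the third pairwise sub-election $\vec{\gamma}$ (arising via Lemma \ref{constructingthreewayelections}) is also symmetric; then both $f'(\vec{\beta})$ and $f'(\vec{\gamma})$ equal $T$, and the three-way consistency of Lemma \ref{transitivethreeway} forces the multiset $\set{f'(\vec{\alpha}), T, T}$ to be consistent, hence $f'(\vec{\alpha}) = T$. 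Constructing such $\vec{\beta}$ uses the extra values in $D_k$ available for $k \geq 4$ to split voters carefully.

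Stage two equates $f'(\vec{\alpha}_1)$ and $f'(\vec{\alpha}_2)$ when $d_1(\vec{\alpha}_1) = d_1(\vec{\alpha}_2)$. Using atomlessness of $\nu$, I choose representatives $\vec{a}_1$ with $\tau_A(\vec{a}_1)=\vec{\alpha}_1$ and $\vec{a}_{-2}$ with $\tau_A(\vec{a}_{-2})=\varrho(\vec{\alpha}_2)$ so that the pointwise sum $\vec{a}_1 + \vec{a}_{-2}$ lies in $A$, i.e., takes values in $D_k$ everywhere. Setting $\vec{\alpha}_3 = \tau_A(\vec{a}_1 + \vec{a}_{-2})$, one has $d_1(\vec{\alpha}_3) = d_1(\vec{\alpha}_1) - d_1(\vec{\alpha}_2) = 0$, so Stage one gives $f'(\vec{\alpha}_3) = T$. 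Lemma \ref{transitivethreeway} then tells us the multiset $\set{f'(\vec{\alpha}_1), -f'(\vec{\alpha}_2), -f'(\vec{\alpha}_3)} = \set{f'(\vec{\alpha}_1), -f'(\vec{\alpha}_2), T}$ is consistent; the only consistent multisets containing $T$ are $\set{T, T, T}$ and $\set{W, T, L}$, both of which force $f'(\vec{\alpha}_1) = f'(\vec{\alpha}_2)$.

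The main obstacle is the combinatorial coupling step that appears in both stages: producing measurable representatives whose pointwise sums always lie in $D_k$, simultaneously avoiding $0$ and staying within $\pm(k-1)$. This is exactly where $k > 3$ matters: for $k = 3$, the set $D_3 = \set{\pm 1, \pm 2}$ is too cramped (many pairs $(d, d') \in D_3 \times D_3$ give forbidden sums in $\set{0, \pm 3, \pm 4}$), which is the same rigidity that permits the non-Borda examples of \autocite{paper1}. For $k \geq 4$, case analysis of admissible pairs in $D_k \times D_k$ combined with measurable partitioning of $V$ by atomlessness supplies enough flexibility to produce the required coupling in every case.
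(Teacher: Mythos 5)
Your overall skeleton (reduce to a consistent $f'$, show $d_1=0$ forces $T$, then show level sets of $d_1$ are level sets of $f'$) is the same as the paper's, and your consistency bookkeeping in stage two is sound. But both stages hinge on a single-shot coupling that does not exist in general, and this is precisely the difficulty the paper's proof is organised around. For stage one, take $k=4$ and $\vec{\alpha}$ with $\alpha_1 = 3/4$, $\alpha_{-3}=1/4$ (so $d_1(\vec{\alpha})=0$). If you try to write a representative $\vec{a}$ pointwise as $\vec{b}+\vec{c}$ with both $\tau_A(\vec{b})$ and $\tau_A(\vec{c})$ symmetric, the decompositions of $+1$ are $(\pm 1,\mp 2)$ up to order and of $-3$ are $(-1,-2)$ up to order; forcing $\beta_3=\beta_{-3}=\gamma_3=\gamma_{-3}$ and $\beta_1=\beta_{-1}$, $\gamma_1=\gamma_{-1}$ kills every admissible pair, so no such coupling exists. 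The paper therefore does \emph{not} decompose into two symmetric vectors in one step: it runs a chain of three lemmas (\ref{firstcomb}, \ref{secondcomb}, \ref{thirdcomb}), each decomposing a wider class of $d_1=0$ vectors into two copies of a vector already known to tie from the previous stage.

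Stage two has the same defect more starkly. Take $\vec{\alpha}_1$ with $\alpha_3=0.95$, $\alpha_1=0.05$ and $\vec{\alpha}_2$ with $\alpha_3=0.9$, $\alpha_2=0.1$; both have $d_1=2.9$. Any coupling of $\vec{\alpha}_1$ with $\varrho(\vec{\alpha}_2)$ must place measure at least $0.95+0.9-1=0.85$ on the pair $(3,-3)$, whose sum is $0\notin D_4$, so $\vec{a}_1+\vec{a}_{-2}\in A$ is impossible. The paper's remedy is the doubling construction (Lemma \ref{doubling} and its relatives \ref{firstgeq}--\ref{secondgeq}): it first proves $f'(\vec{\alpha})=f'(b)$ only in a small regime ($\alpha_{\pm 3}=0$, $0\le b\le 1/10$) where the couplings can be written down explicitly, and separately shows that any $\vec{\alpha}$ can be replaced by an $\vec{\alpha}'$ with the same $f'$-value, half the Borda score, and vanishing extreme coordinates; iterating contracts every vector into the good regime. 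Without something playing the role of this contraction step (and the iterated tie lemmas in stage one), your argument does not go through, so the proposal has a genuine gap rather than being a different complete proof. You would also still need the paper's induction on $k$ (Lemma \ref{layersk}) to pass from $k=4$ to general $k$, which your sketch leaves implicit.
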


\begin{proof}

Given the SWF $F$, we apply Lemma \ref{namiia} to derive a unique relative weight-based SWF $f'$. Lemma \ref{Ftofprime} tells us that this function is consistent. The proof proceeds by using transitive election tables to evaluate $f'$ on different parts of its domain, until we determine that it can indeed be written as $f' = g \circ d_1$ for some function $g: \mathbbm{R} \rightarrow \set{W,T,L}$.

We begin in the case of $k=4$. We denote by $A'_4$ the domain of relative weight-based SWFs on four candidates (which is $\set{\vec{\alpha} \in \mathbbm{R}_{\geq 0}^{D_4}| \sum_{i \in D_4} \alpha_i = 1}$).

\begin{lem}\label{2candsym}
For $f': A'_4 \rightarrow \set{W,T,L}$ consistent, and any $\alpha_3+\alpha_2+\alpha_1 = 1/2$ with $\alpha_i \geq 0$, we have $f'(\alpha_3, \alpha_2, \alpha_1, \alpha_1, \alpha_2, \alpha_3) = T$.
\end{lem}

\begin{proof}
Let $f'(\vec{\alpha}) = X$ and consider the following tying election table:

\begin{equation}\begin{aligned}
\left(\begin{array}{c c | c c | c c }
(\alpha_1) & & (\alpha_2) & & \alpha_3 & \\ \hline
1 & -1 & 2 & -2 & 3 & -3 \\
-1 & 1 & -2 & 2 & -3 & 3 
\end{array}\right)
\end{aligned}\end{equation}

Then by Lemma \ref{tyingelectiontable} we have $X=T$ as required.
\end{proof}

\begin{lem}\label{firstcomb}
Let $\vec{\alpha} \in A'_4$ have $d_1(\vec{\alpha}) = 0$ and satisfy the following inequalities:

\begin{equation}\label{eq:firstcombconditions}
\begin{matrix}
\alpha_3 & \leq & \alpha_{-2}+\alpha_{-3} & \leq & \alpha_1 + \alpha_2 + \alpha_3 \\
\alpha_{-3} & \leq & \alpha_2+\alpha_3 & \leq & \alpha_{-1} + \alpha_{-2} + \alpha_{-3}
\end{matrix}
\end{equation}

Then $f(\vec{\alpha}) = T$.
\end{lem}

\begin{proof}
Let $\vec{\alpha}$ be such a vector. We let $s = \max(0, \alpha_{-3}-\alpha_3)$ and $t = \max(0, \frac{\alpha_{-2}-\alpha_2 + \alpha_{-3}-\alpha_3}{2})$. Then we consider the following tying election table (which we transpose due to width constraints):

\begin{equation}\label{eq:firstcombTET}\begin{aligned}
\left(\begin{array}{c | c c }
(\frac{\alpha_2 - s}{2}) & -1 & 3 \\ 
& 3 & -1 \\ \hline
(t) & -2 & 3 \\
& 3 & -2 \\ \hline
(\frac{\alpha_3}{2}) & 1 & 2 \\
& 2 & 1 \\ \hline
(\frac{\alpha_1}{2} - t) & -1 & 2 \\
& 2 & -1 \\ \hline
(t + \frac{\alpha_2 - \alpha_{-2} + \alpha_3 - \alpha_{-3}}{2}) & -3 & 2 \\
& 2 & -3 \\ \hline
(s) & 1 & 1 \\ \hline
(\frac{\alpha_1 + \alpha_2 - \alpha_{-2}}{2} + \alpha_3 - \alpha_{-3} - t) & -2 & 1 \\
& 1 & -2 \\ \hline
(\frac{\alpha_{-2} + \alpha_{-3}-\alpha_3 - s}{2}) & -3 & 1 \\
& 1 & -3 \\ \hline
(s + \alpha_3 - \alpha_{-3}) & -1 & -1 \\ \hline
(\alpha_{-3}/2) & -2 & -1 \\
& -1 & -2
\end{array}\right)
\end{aligned}\end{equation}

The sum of all the $w_i$ values is $4\alpha_3 + 3\alpha_2 + 2\alpha_1 - \alpha_{-2} - 2 \alpha_{-3} = d_1(\vec{\alpha}) + \norm{\vec{\alpha}}_1 = 1$, so this is a valid election as long as all $w_i$ are non-negative. It suffices to check that

\begin{enumerate}
\item $t \leq \alpha_1 /2$,
\item $t \leq \frac{\alpha_1 + \alpha_2 - \alpha_{-2}}{2} + \alpha_3 - \alpha_{-3}$, and
\item $s \leq \alpha_{-2} + \alpha_{-3}-\alpha_3$
\end{enumerate}

All the other values are trivially non-negative.

For the first inequality, if $t=0$ then this is immediate. Otherwise $t = \frac{\alpha_{-2}-\alpha_2 + \alpha_{-3}-\alpha_3}{2}$ and the inequality becomes $\alpha_{-2} + \alpha_{-3} \leq \alpha_1+\alpha_2+\alpha_3$, which is given as an assumption in (\ref{eq:firstcombconditions}).

For the second inequality, if $t=0$ then this becomes $\alpha_{-2} + 2\alpha_{-3} \leq \alpha_1 + \alpha_2 + 2\alpha_3$. Now we are given in (\ref{eq:firstcombconditions}) that $\alpha_2 + \alpha_3 \leq \alpha_{-1}+\alpha_{-2}+\alpha_{-3}$; this rearranges to $\alpha_1 - \alpha_{-1} + \alpha_2 - \alpha_{-2} + \alpha_3 - \alpha_{-3} \leq \alpha_1$. Subtracting the (vanishing) expression for $d_1(\vec{\alpha})$ from the left hand side gives $\alpha_{-2} - \alpha_2 + 2(\alpha_{-3}-\alpha_3) \leq \alpha_1$, which rearranges to the required inequality.

On the other hand, if $t = \frac{\alpha_{-2}-\alpha_2 + \alpha_{-3}-\alpha_3}{2}$ then the inequality becomes $0 \leq \alpha_1/2 + \alpha_2 - \alpha_{-2} + 3(\alpha_3 - \alpha_{-3})/2$. Subtracting half of the expression for $d_1(\vec{\alpha})$ from the right hand side gives $0 \leq \alpha_{-1}/2$ which is immediate.

For the third inequality, we are given in (\ref{eq:firstcombconditions}) that $\alpha_3 \leq \alpha_{-2} + \alpha_{-3}$ so if $s=0$ the inequality holds. Otherwise $s = \alpha_{-3} - \alpha_3$ and the inequality becomes $\alpha_{-2} \geq 0$ which is immediate.

Therefore $w$ is a valid weight vector for the table, and $M_1$, $M_2$ and $M_1+M_2$ are all vectors in $D_4^t$. Moreover

\begin{equation}
\vec{\alpha}(w,M_1) = \vec{\alpha}(w,M_2) = \begin{pmatrix}
\frac{\alpha_2 - s}{2} + t \\
\frac{\alpha_2 - \alpha_{-2} + 2\alpha_3 - \alpha_{-3} + \alpha_1}{2} \\
\frac{\alpha_2 +s + \alpha_1 - \alpha_{-3}}{2} -t + \alpha_3 \\
\frac{\alpha_2 +s + \alpha_1 - \alpha_{-3}}{2} -t + \alpha_3 \\
\frac{\alpha_2 - \alpha_{-2} + 2\alpha_3 - \alpha_{-3} + \alpha_1}{2} \\
\frac{\alpha_2 - s}{2} + t \end{pmatrix}
\end{equation}

which fulfils the conditions for Lemma \ref{2candsym} to hold, so $f'(\vec{\alpha}(w,M_1)) = f'(\vec{\alpha}(w,M_2)) = T$. Therefore $(w,M)$ is a $T$-transitive election table, and $f'(\vec{\alpha}(w,M_1+M_2)) = T$. 

But computing $\vec{\alpha}(w,M_1+M_2)$ gives precisely $\vec{\alpha}$, so $f(\vec{\alpha}) = T$ as required.
\end{proof}

We apply the same method again to give a broader set of tying vectors.

\begin{lem} \label{secondcomb}
Let $\vec{\alpha} \in A'_4$ have $d_1(\vec{\alpha}) = 0$ and satisfy the following inequalities:

\begin{equation} \label{eq:secondcombconditions}
\begin{aligned}
\alpha_{-3} \leq 2\alpha_2 + 2\alpha_3 \\
\alpha_{3} \leq 2\alpha_{-2} + 2\alpha_{-3}
\end{aligned}
\end{equation}

Then $f(\vec{\alpha}) = T$.
\end{lem}

\begin{proof}
Let $\vec{\alpha}$ be such a vector. Then we consider the following tying election table:

\begin{equation} \label{eq:secondcombTET}\begin{aligned}
\left(\begin{array}{c | c c }
(\frac{\alpha_1}{2}) & -2 & 3 \\ 
& 3 & -2 \\ \hline
(\frac{\alpha_3}{2}) & 1 & 2 \\
& 2 & 1 \\ \hline
(\frac{\alpha_{-1}}{2}) & -3 & 2 \\
& 2 & -3 \\ \hline
(\alpha_2) & 1 & 1 \\ \hline
(\alpha_{-2}) & -1 & -1 \\ \hline
(\frac{\alpha_{-3}}{2}) & -2 & -1 \\
& -1 & -2 \\
\end{array}\right)
\end{aligned}\end{equation}

The $w_i$ values are all non-negative and sum to $1$. Therefore $w$ is a valid weight vector for the table, and $M_1$, $M_2$ and $M_1+M_2$ are all vectors in $D_4^t$. Moreover

\begin{equation}\label{eq:secondcombalpha}
\vec{\alpha}(w,M_1) =\vec{\alpha}(w,M_2) = \begin{pmatrix}
\frac{\alpha_1}{2} \\
\frac{\alpha_3 + \alpha_{-1}}{2} \\
\frac{\alpha_3}{2} + \alpha_2 \\
\frac{\alpha_{-3}}{2} + \alpha_{-2} \\
\frac{\alpha_1 + \alpha_{-3}}{2} \\
\frac{\alpha_{-1}}{2} \end{pmatrix}
\end{equation}

Now we will confirm that this vector satisfies the conditions in Lemma \ref{firstcomb}. Calculating $d_1$ on the vector gives $\frac{3\alpha_3 + 2\alpha_2 + \alpha_1 -\alpha_{-1} - \alpha_{-2} - 3\alpha_{-3}}{2}$ which is $d_1(\vec{\alpha})/2 = 0$ by assumption. As for the four inequalities given in (\ref{eq:firstcombconditions}), the left-hand inequalities are immediate, while the ones on the right become $\alpha_3 + \alpha_1 + \alpha_{-1} \leq \alpha_1 + \alpha_{-1} + 2\alpha_{-2} + 2\alpha_{-3}$ and $\alpha_{-3} + \alpha_{-1} + \alpha_1 \leq \alpha_{-1} + \alpha_1 + 2\alpha_2 + 2\alpha_3$.

These simplify to the two inequalities we took as assumptions in (\ref{eq:secondcombconditions}), so we can indeed apply Lemma \ref{firstcomb} and get that $f'(\vec{\alpha}(w,M_1))=f'(\vec{\alpha}(w,M_2))=T$. Therefore $(w,M)$ is a $T$-transitive election table, and $f'(\vec{\alpha}(w,M_1+M_2)) = T$. 

But computing $\vec{\alpha}(w,M_1+M_2)$ gives precisely $\vec{\alpha}$, so $f(\vec{\alpha}) = T$ as required.
\end{proof}

A third and final application of this method gives all $\vec{\alpha} \in A'_4$ for which the Borda rule gives a tie.

\begin{lem} \label{thirdcomb}
Let $\vec{\alpha} \in A'_4$ have $d_1(\vec{\alpha}) = 0$. Then $f(\vec{\alpha}) = T$.
\end{lem}

\begin{proof}
We use the same transitive election table as in (\ref{eq:secondcombTET}). This time we need to show that the value of $\vec{\alpha}(w,M_1)$ given in (\ref{eq:secondcombalpha}) satisfies the assumptions of (\ref{eq:secondcombconditions}). The two inequalities become

\begin{equation}
\begin{aligned}
\frac{\alpha_1}{2} &\leq \alpha_1 + \alpha_{-1} + \alpha_{-3} \\
\frac{\alpha_{-1}}{2} &\leq \alpha_3 + \alpha_{1} + \alpha_{-1}
\end{aligned}
\end{equation}

Both of these are immediate. Once again, this means that $(w,M)$ is a $T$-transitive election table, and $f'(\vec{\alpha}) = f'(\vec{\alpha}(w,M_1+M_2)) = T$, as required.
\end{proof}

This means that for any SWF $F$ on four candidates with conditions U, MIIA, A and N, the relative SWF $f'$ gives a tie whenever $\phi \circ d_1$ the Borda rule gives a tie. What can be said about all other $\vec{\alpha} \in A$?

For $b$ a real number in the interval $[0,1/10]$ we define $\vec{\alpha}_b = \left(0,1/4,1/4+b/2, 1/4 - b/2,1/4,0 \right)$. Note that $d_1(\vec{\alpha}_b)=b$. We define $f'(b) = f'(\vec{\alpha}_b)$.

\begin{lem}\label{firstgeq}
For $\vec{\alpha} \in A'_4$ with $\alpha_3 = \alpha_{-3} = 0$, $\alpha_1 + \alpha_{-1} \geq 1/2$ and $0 \leq b = d_1(\vec{\alpha}) \leq 1/10$, we have $f'(\vec{\alpha}) = f'(b)$.
\end{lem}

\begin{proof}

Let $\vec{\alpha}$ be such a vector. We let $c = \min(\alpha_1, 1/4 - b/2, \alpha_1 + \alpha_{-1}-1/2) \geq 0$ and we write out the following transitive election table:

\begin{equation} \label{eq:firstgeqTET}\begin{aligned}
\left(\begin{array}{c | c c }
(c) & -1 & 2 \\ \hline
(\alpha_1+\alpha_{-1} - 1/2 - c) & 1 & -2 \\ \hline
(\frac{\alpha_1-c}{2}) & 2 & -1 \\ 
& -2 & 3 \\ \hline
(\frac{1/2 - \alpha_1 + c}{2}) & 2 & -3 \\
& -2 & 1 \\ \hline
(\frac{\alpha_2}{\alpha_2 + \alpha_{-2}} \cdot (3/4 + b/2 - \alpha_1 - \alpha_{-1} +c)) & 1 & 1 \\ \hline
(\frac{\alpha_{-2}}{\alpha_2 + \alpha_{-2}} \cdot (3/4 + b/2 - \alpha_1 - \alpha_{-1} +c)) & 1 & -3 \\ \hline
(\frac{\alpha_2}{\alpha_2 + \alpha_{-2}} \cdot (1/4 - b/2 - c)) & -1 & 3 \\ \hline
(\frac{\alpha_{-2}}{\alpha_2 + \alpha_{-2}} \cdot (1/4 - b/2 - c)) & -1 & -1
\end{array}\right)
\end{aligned}\end{equation}

We check that the $1$-norm of $w$ is $1$. We also need to show that each $w_i$ is non-negative. Some cases are immediate and the others reduce to showing that $c \geq \alpha_1 - 1/2$ and $c \geq \alpha_1 + \alpha_{-1} - 3/4 - b/2$.

If $c < \alpha_1 - 1/2$ then $c = 1/4-b/2$ and we have $3/4 - b/2 < \alpha_1$. But then $b = d_1(\vec{\alpha}) > 3/4 - b/2 - 2(1/4+b/2)$ and $b > 1/10$, a contradiction.

If $c < \alpha_1 + \alpha_{-1} - 3/4 - b/2$ then either $3/4 + b/2 < \alpha_{-1}$ which immediately means that $d_1(\vec{\alpha})<0<b$, a contradiction, or $1 < \alpha_1 + \alpha_{-1}$, a contradiction. So indeed all of the inequalities hold and $w$ represents a real partition of the voters.

Now $\vec{\alpha}(w,M_1) = \vec{\alpha}_b$ and $\vec{\alpha}(w, M_1+M_2) = \vec{\alpha}$. On the other hand, $d_1(\vec{\alpha}(w, M_2))=0$ so by Lemma \ref{thirdcomb} we know that $f'(\vec{\alpha}(w,M_2))=T$, so for consistency we must have $f'(b) = f'(\vec{\alpha}(w,M_1)) = f'(\vec{\alpha}(w,M_1+M_2)) = f'(\vec{\alpha})$ as required.
\end{proof}

\begin{lem}\label{secondgeq}
For $\vec{\alpha} \in A'_4$ with $\alpha_3 = \alpha_{-3} = 0$ and $0 \leq b = d_1(\vec{\alpha}) \leq 1/10$, we have $f'(\vec{\alpha}) = f'(b)$.
\end{lem}

\begin{proof}
Let $\vec{\alpha}$ be a such a vector. If $\alpha_1+\alpha_{-1} \geq 1/2$ then we are done by Lemma \ref{firstgeq}. So instead suppose that $\alpha_1 + \alpha_{-1} < 1/2$. Set $\lambda = \frac{1}{2} + \frac{b}{2 + 2\alpha_1 + 2\alpha_{-1}}$. and note that $0 \leq \lambda \leq 1$. Now consider the following transitive election table:

\begin{equation} \label{eq:secondgeqTET}\begin{aligned}
\left(\begin{array}{c | c c }
(\lambda \alpha_1) & 2 & -1 \\ \hline
(\lambda \alpha_{-1}) & 2 & -3 \\ \hline
((1-\lambda) \alpha_1) & -2 & 3 \\ \hline
((1-\lambda) \alpha_{-1}) & -2 & 1 \\ \hline
(\lambda \alpha_2) & 1 & 1 \\ \hline
(\lambda \alpha_{-2}) & 1 & -3 \\ \hline
((1-\lambda) \alpha_2) & -1 & 3 \\ \hline
((1-\lambda) \alpha_{-2}) & -1 & -1
\end{array}\right)
\end{aligned}\end{equation}

Trivially these values of $w_i$ are non-negative and they add up to $1$, so this is a valid $w$ vector. We have $\vec{\alpha}(w, M_1) = (\lambda (\alpha_1+\alpha_{-1}), \lambda(\alpha_2+\alpha_{-2}), (1-\lambda)(\alpha_2+\alpha_{-2}), (1-\lambda)(\alpha_1+\alpha_{-1}))$ which satisfies the conditions of Lemma \ref{firstgeq}, so $f'(\vec{\alpha}(w, M_1)) = f'(b)$. On the other hand $d_1(\vec{\alpha}(w,M_2)) = 0$, so $f'(\vec{\alpha}(w,M_2)) = T$; therefore $\vec{\alpha}(w,M_1+M_2) = \vec{\alpha}$ has $f'(\vec{\alpha}) = f'(b)$ as required.
\end{proof}

We now know that within a small region of the space of pairwise comparisons between candidates, the pairwise ranking is determined only by the Borda score. We require one more step to extend this to the entire space.

\begin{lem}\label{doubling}
For any $b \in [-3,3]$ and for any $\vec{\alpha} \in A'_4$ with $d_1(\vec{\alpha}) = b$, there exists $\vec{\alpha}'$ with $d_1(\vec{\alpha}') = b/2$, $\alpha'_3 = \alpha'_{-3} = 0$ and $f(\vec{\alpha}') = f(\vec{\alpha})$.
\end{lem}

\begin{proof}
Given $\vec{\alpha}$ we consider the following transitive election table:

\begin{equation} \label{eq:doublingTET}\begin{aligned}
\left(\begin{array}{c | c c }
(\alpha_3/2) & 1 & 2 \\ 
& 2 & 1 \\ \hline
(\alpha_2) & 1 & 1 \\ \hline
(\alpha_1/2) & -1 & 2 \\
& 2 & -1 \\ \hline
(\alpha_{-1}/2) & -2 & 1 \\
& 1 & -2 \\ \hline
(\alpha_{-2}) & -1 & -1 \\ \hline
(\alpha_{-3}/2) & -1 & -2 \\
& -2 & -1
\end{array}\right)
\end{aligned}\end{equation}

Now $\vec{\alpha}(w,M_1+M_2) = \vec{\alpha}$ and

\begin{equation}
\begin{aligned}
\vec{\alpha}(w,M_1) =\vec{\alpha}(w,M_2) = (0, \frac{\alpha_3 + \alpha_1}{2}, \frac{\alpha_3 + 2\alpha_2 + \alpha_{-1}}{2}, \frac{\alpha_1 + 2\alpha_{-2} + \alpha_{-3}}{2}, \frac{\alpha_{-1} + \alpha_{-3}}{2}, 0)
\end{aligned}
\end{equation}

We call this vector $\vec{\alpha}'$. Now let $X = f'(\vec{\alpha}')$; then (\ref{eq:doublingTET}) is an $X$-transitive election table, so $f'(\vec{\alpha}) = X = f(\vec{\alpha}')$ as required.
\end{proof}

\begin{lem}\label{layers}
For any $\vec{\alpha} \in A'_4$, $f(\vec{\alpha})$ is determined by $d_1(\vec{\alpha})$.
\end{lem}

\begin{proof}
We begin by taking $\vec{\alpha}$ with $0 \leq b = d_1(\vec{\alpha}) \leq 1/5$. Now by Lemma \ref{doubling} we can construct $\vec{\alpha}'$ with $0 \leq b/2 = d_1(\vec{\alpha}') \leq 1/10$ such that $f'(\vec{\alpha}) = f'(\vec{\alpha}')$. But $f'(\vec{\alpha}') = f'(b/2)$ by Lemma \ref{secondgeq}, so $f'(\vec{\alpha}) = f'(b/2)$ is determined only by $b = d_1(\vec{\alpha})$.

Now for any $\vec{\alpha}$ with $b = d_1(\vec{\alpha})\geq 0$ we can define a chain of $\vec{\alpha} = \vec{\alpha}^{(0)}, \vec{\alpha}^{(1)}, \dots, \vec{\alpha}^{(4)}$ by the construction in (\ref{eq:doublingTET}). Again, $f'(\vec{\alpha}) = f'(\vec{\alpha}^{(4)})$, but $d_1(\vec{\alpha}^{(4)}) = d_1(\vec{\alpha})/16 \leq 1/5$, so $f'(\vec{\alpha}) = f'(b/32)$ and is determined only by its Borda score.

Finally, for $d_1(\vec{\alpha})<0$, we have $d_1(\varrho(\vec{\alpha})) > 0$ so $f'(\varrho(\vec{\alpha})) = f'(-b)$. But since $f'$ is consistent we have $f'(\varrho(\vec{\alpha})) = -f'(\vec{\alpha}) = -f'(-b)$, so once again $f'(\vec{\alpha})$ is determined only by its Borda score.
\end{proof}

We now generalise to the case of $k$ candidates, completing the proof of Theorem \ref{IUAk}.

\begin{lem}\label{layersk}
For any $k \geq 4$ and $\vec{\alpha} \in A'_k$, the value of $f(\vec{\alpha})$ is determined by $d_1(\vec{\alpha})$.
\end{lem}

\begin{proof}
We prove this inductively on $k$; we already have the case of $k=4$ from Lemma \ref{layers}. Now assuming that the statement hold for $4 \leq k \leq n$, consider the case of $k=n+1$. Note that if we restrict $E$ to the elections in which the candidate $c_{n+1}$ is ranked last by every voter, we must still have a consistent election between the candidates $(c_i)_{i=1}^n$ obeying all of the given conditions. Therefore we can apply the inductive hypothesis to see that for any $\vec{\alpha} \in A_n \subset A_{n+1}$, $f'(\vec{\alpha})$ is determined by $d_1(\vec{\alpha})$; we call this value $f'(b)$.

Now for any $\vec{\alpha}$, with $d_1(\vec{\alpha})=b$, consider the following transitive election table, generalising the one given in (\ref{eq:doublingTET}).

\begin{equation} \label{eq:layerskTET}\begin{aligned}
\left(\begin{array}{c | c c }
(\alpha_{2i})_{1 \leq i \leq \lfloor n/2\rfloor} & i & i \\ \hline
(\alpha_{2i+1}/2)_{1 \leq i \leq \lfloor (n-1)/2 \rfloor} & i & i+1 \\
& i+1 & i \\ \hline
(\alpha_1/2) & -1 & 2 \\
& 2 & -1 \\ \hline
(\alpha_{-1}/2) & -2 & 1 \\
& 1 & -2 \\ \hline
(\alpha_{-2i})_{1 \leq i \leq \lfloor n/2\rfloor} & -i & -i \\ \hline
(\alpha_{-2i-1}/2)_{1 \leq i \leq \lfloor (n-1)/2 \rfloor} & -i & -i-1 \\
& -i-1 & -i
\end{array}\right)
\end{aligned}\end{equation}

Now each $w_i \geq 0$ and $\sum_i w_i = 1$. Moreover, $\vec{\alpha}(w,M_1+M_2)= \vec{\alpha}$, whereas $\vec{\alpha}(w,M_1) = \vec{\alpha}(w,M_2) = \vec{\alpha}'$ is in $A_n$, with $d_1(\vec{\alpha}) = b/2$ and therefore $f'(\vec{\alpha}')=f'(b/2)$. Thus (\ref{eq:layerskTET}) is a $f'(b/2)$-transitive election table, and $f'(\vec{\alpha}) = f'(b/2)$. So $f'(\vec{\alpha})$ is indeed determined by $d_1(\vec{\alpha})$ for all of $A_{n+1}$, completing the inductive step. This completes the proof.
\end{proof}

Thus $f': A'_k \rightarrow \set{W,T,L}$ decomposes to $f' \circ d_1$, so $F$ is Borda-determined, proving Theorem \ref{IUAk}.
\end{proof}

We proceed to determine what kind of function $f': [1-k,k-1] \rightarrow \set{W,T,L}$ can be. We have already seen that $f'(b) = -f'(-b)$, corresponding to condition \ref{consistent:twoway} of the definition of consistency. The following is the corresponding result for condition \ref{consistent:threeway}.

\begin{lem}\label{consistentAk}
Let $F$ be a SWF with voters $V$ infinite and candidates $C$ with $|C|>3$, satisfying U, MIIA, A and N. Let $g: [1-k,k-1] \rightarrow \set{W,T,L}$ be such that $g \circ d_1$ is $F$'s pairwise weight-based SWF. Then for $b_1, b_2, b_3 \in [1-k,k-1]$, if $b_1+b_2+b_3 = 0$ then $\set{g(b_1),g(b_2),g(b_3)}$ is a consistent multiset.
\end{lem}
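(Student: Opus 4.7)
My plan is to realise the triple $(b_1, b_2, b_3)$ as the profile of pairwise Borda differences $(d_1(\pi_{1,2}(\varepsilon)), d_1(\pi_{2,3}(\varepsilon)), d_1(\pi_{3,1}(\varepsilon)))$ of a weight-based election $\varepsilon \in E'$ on the candidates $c_1, c_2, c_3$, and then invoke Lemma~\ref{Ftofprime}. Since $f' = g \circ d_1$ by Theorem~\ref{IUAk}, the conclusion that $\{f'(\pi_{1,2}(\varepsilon)), f'(\pi_{2,3}(\varepsilon)), f'(\pi_{3,1}(\varepsilon))\}$ is a consistent multiset will translate immediately into the desired consistency of $\{g(b_1), g(b_2), g(b_3)\}$.

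For a generic triple I would build $\varepsilon$ using Lemma~\ref{constructingthreewayvotes}: for each admissible $(d_1, d_2) \in D_k^2$ with $d_1 + d_2 \in D_k$ that lemma supplies a ballot $r(d_1, d_2)$ realising those pairwise differences, and I would pick non-negative weights $\varepsilon_{r(d_1, d_2)}$ summing to $1$ that solve $\sum \varepsilon\, d_1 = b_1$ and $\sum \varepsilon\, d_2 = b_2$ (the third equation $\sum \varepsilon\, (-d_1 - d_2) = b_3$ follows from $b_1 + b_2 + b_3 = 0$). If some $b_i = 0$, a short direct argument suffices: the two-way part of Lemma~\ref{Ftofprime} applied to self-dual $\vec{\alpha}$ with $d_1(\vec{\alpha}) = 0$ forces $g(0) = T$, and the same identity yields $g(-b) = -g(b)$ for every $b \in [1-k, k-1]$, so the multiset $\{T, g(b), -g(b)\}$ is always one of $\{W, T, L\}$, $\{T, T, T\}$, or $\{L, T, W\}$, all consistent. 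When $|b_i| = k-1$ while the other two coordinates lie in the middle band (for $b_1 = k-1$, this means $b_2 \in [-(k-2), -1]$), the construction still succeeds with every voter ranking $c_1$ first and $c_2$ last and the position of $c_3$ distributed across $\{1, \dots, k-2\}$ to hit the target average.

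The step I expect to be hardest is the small family of ``corner'' triples where no $b_i$ vanishes, one is at the extreme $\pm(k-1)$, and the others lie in a gap not reachable by any single election profile---for instance $b_1 = k-1$ with $b_2 \in (-1, 0)$ and so $b_3 \in (-(k-1), -(k-2))$. Such triples lie outside the convex hull of achievable single-ballot pairs, so the direct construction breaks down and the zero-coordinate argument does not apply. I would try to close the argument either by combining the achievable endpoint $(k-1, -1, -(k-2))$ with an interior triple inside a Transitive Election Table in the style of Section~\ref{section:tools}, or by exploiting the structural constraints that the already-established consistency on the achievable region imposes on $g$, in order to force the multiset into one of the four consistent shapes.
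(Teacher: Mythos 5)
Your overall strategy---realise $(b_1,b_2,b_3)$ as the pairwise Borda differences of a single weight-based election and then invoke the consistency from Lemma \ref{Ftofprime}---is exactly the paper's, and your analysis of where it breaks is accurate: the achievable pairs $(b_1,b_2)$ form the convex hull of $\set{(d_1,d_2) \in D_k^2 : d_1+d_2 \in D_k}$, and a triple such as $b_1 = k-1$, $b_2 \in (-1,0)$ lies outside it, since every single ballot with $\pi_{1,2}(r)=k-1$ forces $\pi_{2,3}(r) \in \set{-1,\dots,-(k-2)}$. But flagging the corner cases and gesturing at two possible repairs is not a proof, and neither of your suggested repairs is carried out; as written the argument has a genuine hole precisely at those triples.

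The missing idea is to not attack the corners at all: since Theorem \ref{IUAk} is already in hand, $g(b)=g(b/2)$ by Lemma \ref{doubling} (the doubling construction produces $\vec{\alpha}'$ with $d_1(\vec{\alpha}')=b/2$ and $f'(\vec{\alpha}')=f'(\vec{\alpha})$, and $f'$ depends only on $d_1$). Iterating, $g(b_i)=g(b_i/2^k)$ with $b_i/2^k \in [-1,1]$, and the rescaled triple still sums to zero. So it suffices to realise triples with every coordinate in $[-1,1]$, where $(b_1,b_2)$ is a convex combination of the six points $(\pm1,0),(0,\pm1),(1,-1),(-1,1)$, each of which is hit by an explicit ballot; the paper then writes down one transitive election table whose two rows have Borda scores $b_1$ and $b_2$ and whose sum has score $b_1+b_2$. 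This replaces your hardest step by a one-line reduction, and it is the step your proposal does not supply. Your treatment of the $b_i=0$ case is fine but becomes unnecessary once the realisation covers the whole small region.
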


\begin{proof}

By Lemma \ref{Ftofprime}, $g \circ d_1$ must be consistent, so it suffices to show that for $b_1,b_2,b_3 \in [-1,1]$ with $b_1 + b_2 + b_3 = 0$, there exists a weight-based election $\vec{\varepsilon} \in E'_k$ for which

\begin{equation}
\begin{aligned}
d_1(\pi_{1,2}(\vec{\varepsilon})) &= b_1 \\
d_1(\pi_{2,3}(\vec{\varepsilon})) &= b_2 \\
d_1(\pi_{3,1}(\vec{\varepsilon})) &= b_3
\end{aligned}
\end{equation}

Then, for $b_1,b_2,b_3 \in [1-k,k-1]$, we let $b'_i = b_i/2^k \in [-1,1]$. By Lemma \ref{doubling} we know that $g(b'_i)=g(b_i)$, so this will complete the proof.

Either two of the $g(b_i)$ are equal, in which case assume without loss of generality that these are $b_1$ and $b_2$, or one of them takes each of the values $W$, $T$ and $L$, in which case assume without loss of generality that $g(b_1)=T$ and set $X = g(b_2)$.

Now the possible values of $(b_1,b_2)$ are all convex combinations of

\begin{align*}
P = \set{(1,0) , (0,1) , (1,-1) , (-1,1) , (-1,0) , (0,-1)}
\end{align*}

Let $(b_1,b_2) = \sum_{(x,y) \in P} \lambda_{x,y} (x,y)$, with $\lambda_{x,y} \geq 0$ and $\sum_{(x,y) \in P} \lambda_{x,y} = 1$. Then the following is an $X$-transitive election table:

\begin{equation}
\begin{aligned}
\left(\begin{array}{c | c c }
(\lambda_{1,0}/2) & 1 & 2 \\
& 1 & -2 \\ \hline
(\lambda_{0,1}/2) & 2 & 1 \\
& -2 & 1 \\ \hline
(\lambda_{1,-1}/2) & 1 & 1 \\
& 1 & -3 \\ \hline
(\lambda_{-1,1}/2) & 1 & 1 \\
& -3 & 1 \\ \hline
(\lambda_{-1,0}/2) & -1 & 2 \\
& -1 & -2 \\ \hline
(\lambda_{0,-1}/2) & -1 & 2 \\
& -1 & -2
\end{array}\right)
\end{aligned}
\end{equation}

because $d_1(\vec{\alpha}(w,M_1)) = b_1$ and $d_1(\vec{\alpha}(w,M_2)) = b_2$. Therefore $g(b_1+b_2) = f'(\vec{\alpha}(w,M_1+M_2))=X$, so $g(b_3) = -X$. Therefore $\set{g(b_1),g(b_2),g(b_3)} = \set{X,X,-X}$ or $\set{T,X,-X}$ is consistent, as required.
\end{proof}

We now recall the following result from \autocite{paper1}:

\begin{lem}[Lemma 4.6 in \autocite{paper1}]\label{measurable}
Let $-a < 0 < b$ with $2a \geq b$ and $2b \geq a$, and let $f'$ be a measurable function from $[-a,b]$ to $\set{W,T,L}$. Suppose that for any $x_1,x_2,x_3 \in [-a,b]$ with $x_1+x_2+x_3=0$, the multiset $\set{f'(x_i)}_i$ is consistent. Then $f'(x) = \varphi(\kappa x)$ for some real number $\kappa$.
\end{lem}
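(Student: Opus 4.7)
The plan is to analyze the three level sets $S_X := (f')^{-1}(X)$ for $X \in \set{W, T, L}$ as partial additive subsemigroups of $[-a, b]$ and then combine this structure with measurability via a Steinhaus-style argument. The easy pointwise consequences of consistency come first: the triple $(0,0,0)$ forces $f'(0) = T$, and $(x, -x, 0)$ forces $f'(-x) = -f'(x)$, so $S_L = -S_W$ and $S_T = -S_T$. Applying consistency to a triple $(x, y, -(x+y))$ with $x, y$ in the same level set then gives the closure rules $S_T + S_T \subset S_T$, $S_W + S_W \subset S_W$, $S_L + S_L \subset S_L$ whenever the sum and its negation both lie in $[-a, b]$, together with the mixed rules $S_W + S_T \subset S_W$ and $S_L + S_T \subset S_L$; only $S_W + S_L$ is left unconstrained.

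If $|S_T| > 0$, Steinhaus applied to $S_T$ gives $S_T - S_T \supset (-\delta, \delta)$, and since $S_T = -S_T$ this equals $S_T + S_T$, so closure forces $S_T \supset (-\delta, \delta)$. Writing any target $x$ as $n$ equal steps of size $x/n$ propagates membership to $[-\min(a,b), \min(a,b)]$. The hypotheses $2a \geq b$ and $2b \geq a$ are used precisely to extend this to the whole of $[-a, b]$: when $a > b$ and $x \in [-a, -b)$, the triple $(-x/2, -x/2, x)$ is valid because $-x/2 \in (b/2, a/2] \subset (0, b]$ (using $a \leq 2b$), and $\set{T, T, f'(x)}$ being consistent forces $f'(x) = T$. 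Hence $f' \equiv T$, so $\kappa = 0$ works.

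If $|S_T| = 0$, then $(0, b]$ is partitioned (up to null) into the disjoint measurable subsemigroups $P^+ := S_W \cap (0, b]$ and $N^+ := S_L \cap (0, b]$. A standard fact about measurable subsemigroups of $(0, b]$ of positive measure — each contains a right-tail $(c, b]$, by a Lebesgue-density plus iterated-sumset argument — combined with disjointness, forces exactly one of $P^+, N^+$ to be null. WLOG $|N^+| = 0$, so $P^+$ has full measure in $(0, b]$ and, by oddness, $S_L$ has full measure on $[-a, 0)$. To upgrade to pointwise equality I would use two triples. For $x_0 \in (0, a]$, decompose $x_0 = y + z$ with $y, z > 0$, choose $y$ generic so that $f'(y) = f'(z) = W$, and read off consistency of $(y, z, -x_0)$ to deduce $f'(x_0) = W$. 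For $x_0 \in (a, b]$ — where $-x_0$ lies outside the domain — instead decompose $x_0 = y + z$ with $y, z \in [x_0 - a, a]$ (nonempty exactly because $b \leq 2a$), so that $(x_0, -y, -z)$ is a valid triple, and use the generic values $f'(-y) = f'(-z) = L$ together with consistency of $\set{f'(x_0), L, L}$ to force $f'(x_0) = W$. The symmetric argument on $[-a, 0)$ (using $a \leq 2b$ to handle the tail $[-a, -b)$) completes the pointwise conclusion, yielding $f' = \varphi(\kappa x)$ for any $\kappa > 0$; the mirror case with $N^+$ surviving gives $\kappa < 0$.

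The main obstacle is the subsemigroup step in the $|S_T| = 0$ branch: showing that the two disjoint measurable subsemigroups $P^+$ and $N^+$ cannot both have positive measure. This rests on the Lebesgue-density step showing that a measurable subsemigroup of positive measure in $(0, b]$ contains an open interval (so, after iterated addition, a right-tail), at which point disjointness finishes the job. Once that hurdle is cleared, the remaining work is routine triple-based case-work, with the constants $2a \geq b$ and $2b \geq a$ supplying just enough room to validate each required triple near the boundary of $[-a, b]$.
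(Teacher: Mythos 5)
The paper you are working from does not actually prove this lemma: it is imported verbatim as Lemma 4.6 of \autocite{paper1}, so there is no in-paper argument to compare yours against. Judged on its own terms, your outline is the natural proof and I believe it goes through. The skeleton --- $f'(0)=T$ from $(0,0,0)$, oddness from $(x,-x,0)$, the closure rules on level sets, Steinhaus on $S_T$ in the tying case, and the observation that two disjoint positive-measure subsemigroups would each have to contain a right-tail in the non-tying case --- is the standard route for statements of this type, and your use of $2a\geq b$ and $2b\geq a$ to validate the boundary triples $(x,-x/2,-x/2)$ and $(x_0,-y,-z)$ is exactly the right mechanism.

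Two points of care. First, the identities $S_L=-S_W$ and $S_T=-S_T$, and all of the closure rules, only hold where the relevant points \emph{and their negations} lie in $[-a,b]$, i.e.\ on $[-\min(a,b),\min(a,b)]$; you state them globally. This matters in the semigroup step you flag as the main obstacle: if $b>a$, the positive measure of $P^+$ could a priori be concentrated in $(a,b]$, where sums escape the domain and the closure rule is silent, so Steinhaus cannot be applied directly. The fix is the same halving triple you already use elsewhere --- $(x,-x/2,-x/2)$ shows that $x\in S_W\cap(a,b]$ forces $x/2\in S_W$ --- which transports positive measure down into $(0,\min(a,b)/2]$, after which density points have sums that stay in range, the iterated sumset yields an interval, and repeated halving plus translation yields the tail $(c,b]$ and eventually all of $(0,b]$. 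Second, the same transport remark is needed in the $|S_T|>0$ branch before invoking $S_T=-S_T$ and Steinhaus. Neither issue undermines the approach; they are bookkeeping you would need to write out, and with them included the argument is complete.
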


Lemma \ref{consistentAk} gives us precisely the conditions we need to apply Lemma \ref{measurable}, with $a = b = k-1$. This tells us that $f'$ is constant at some $X \in \set{W,T,L}$ for $d_1(\vec{\alpha})<0$, constant at $-X$ for $d_1(\vec{\alpha})>0$ and (as we already knew) $f'(0)=T$. Thus $f'$ exactly matches one of the three unweighted Borda rules, so $F$ is a Borda rule as required.

\begin{cor}\label{IUAkmeasurable}
Let $F$ be a SWF with voters $V$ and candidates $C$, satisfying U, MIIA, A and N, with $F'$ measurable. If $V$ is an infinite set and $|C|>3$, then $F$ is a Borda rule.
\end{cor}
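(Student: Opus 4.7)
The plan is to string together three already-established results with one small measurability verification. Theorem \ref{IUAk} applies directly under the hypotheses of this corollary and yields that $F$ is Borda-determined: there is a function $g:[1-k,k-1] \rightarrow \set{W,T,L}$ with $f' = g \circ d_1$. Lemma \ref{consistentAk} then tells us that whenever $b_1,b_2,b_3 \in [1-k,k-1]$ sum to zero the multiset $\set{g(b_1), g(b_2), g(b_3)}$ is consistent -- precisely the zero-sum triple hypothesis required by Lemma \ref{measurable}. Taking $a = b = k-1$ in that Lemma (so $2a \geq b$ and $2b \geq a$ hold trivially) would force $g(x) = \varphi(\kappa x)$ for some real $\kappa$; the cases $\kappa > 0$, $\kappa = 0$ and $\kappa < 0$ correspond exactly to the unweighted Borda rules $B_1$, $B_0$ and $B_{-1}$.

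The one step not handed to us directly by those lemmas is verifying that $g$ is measurable as a function from $[1-k,k-1]$ into the discrete space $\set{W,T,L}$. I would establish this by exhibiting an explicit continuous section of the composition $d_1 \circ \pi_{1,2}: E' \rightarrow [1-k,k-1]$. Fix ballots $r_+$ and $r_-$ with $\pi_{1,2}(r_+) = k-1$ and $\pi_{1,2}(r_-) = 1-k$; for $\lambda(b) = (b+k-1)/(2(k-1)) \in [0,1]$, let $s(b) \in E'$ place weight $\lambda(b)$ on $r_+$ and weight $1-\lambda(b)$ on $r_-$. Then $d_1(\pi_{1,2}(s(b))) = b$ by a direct computation, and $s$ depends linearly, hence Lebesgue-measurably, on $b$. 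From the defining relation $f' \circ \pi_{i,j} = \pi_{i,j} \circ F'$ at the weight-based level, $g(b) = \pi_{1,2}(F'(s(b)))$, which is the composition of the hypothesised measurable $F'$ with the continuous map $s$ and the discrete projection $\pi_{1,2}: \mathfrak{R}_= \rightarrow \set{W,T,L}$; measurability of $g$ follows.

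The main obstacle, modest though it is, is this measurability transfer: Lemma \ref{measurable} genuinely requires a measurable input, and while it is intuitively clear that measurability of $F'$ should descend to the one-dimensional $g$, one cannot assert this without the section construction above (or something equivalent). Everything else is a direct invocation of Theorem \ref{IUAk}, Lemma \ref{consistentAk}, and Lemma \ref{measurable}, followed by reading off which of $B_1$, $B_0$, $B_{-1}$ is obtained from the sign of $\kappa$.
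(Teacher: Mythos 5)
Your overall route is exactly the paper's: Theorem \ref{IUAk} gives $f' = g \circ d_1$, Lemma \ref{consistentAk} supplies the zero-sum-triple consistency hypothesis, and Lemma \ref{measurable} with $a=b=k-1$ forces $g = \varphi(\kappa\,\cdot)$, whose three sign cases are the three unweighted Borda rules. You are also right to notice that Lemma \ref{measurable} needs the \emph{one-dimensional} function $g$ to be measurable, whereas Definition \ref{def:measurable} only gives measurability of $F'$ on the simplex; the paper passes over this transfer in silence, so isolating it is a genuine contribution.

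However, the section argument you give for that transfer does not work. Your map $s$ embeds $[1-k,k-1]$ as a line segment $L$ inside the simplex $\set{\vec{\varepsilon} \mid \sum_r \varepsilon_r = 1}$, and $L$ is a Lebesgue-null subset of that simplex (its dimension is $|\mathfrak{R}|-1 = k!-1 \geq 2$). A Lebesgue-measurable subset of the simplex may meet a null segment in an arbitrary, hence possibly non-measurable, subset of that segment: take any non-measurable $V' \subset [1-k,k-1]$; then $s(V')$ is null, so it can be adjoined to or removed from $(F')^{-1}(\pi_{1,2}^{-1}(\set{W}))$ without disturbing measurability of $F'$, yet $s^{-1}$ of the result is $V'$. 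So ``measurable $F'$ composed with continuous $s$'' does not yield a measurable $g$; precomposition by a continuous map preserves \emph{Borel} measurability, not Lebesgue measurability. The correct transfer goes through the full preimage rather than a section: by Theorem \ref{IUAk} the set $(F')^{-1}(\pi_{1,2}^{-1}(\set{W}))$ equals $(d_1 \circ \pi_{1,2} \circ \tau_A^{-1}\dots)^{-1}(g^{-1}(\set{W}))$, i.e.\ it is a union of entire fibres of the affine surjection $\vec{\varepsilon} \mapsto d_1(\pi_{1,2}(\vec{\varepsilon}))$; after an affine change of coordinates this is a product set $g^{-1}(\set{W}) \times (\text{fibre})$ with positive-dimensional fibre, and Fubini for the completed measure shows such a product is Lebesgue measurable only if $g^{-1}(\set{W})$ is. With that replacement (and likewise for $T$ and $L$) your proof is complete and coincides with the paper's.
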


Positive Responsiveness is enough to guarantee measurability, as we observed in the proof of Corollary 4.8 in \autocite{paper1}. Unlike in that case, the Pareto principle is also sufficient to guarantee the positive unweighted Borda rule. For any $b \in [1,2]$ we consider $\vec{\alpha}^{(b)}$ with $\alpha_2 = b-1$, $\alpha_1 = 2-b$ and $\alpha_i = 0$ elsewhere. Then $\vec{\alpha}^{(b)}$ satisfies the criterion of the Pareto condition, so $f'(b) = f'(\vec{\alpha}^{(b)}) = W$ for $b \in [1,2]$. But for any $0<b$ there is some $m \in \mathbbm{Z}$ for which $1 \leq 2^m b \leq 2$, and $f'(b) = f'(2^m b) =W$, and $f' = \phi \circ d_1$ as required. This completes the proof of Maskin's Theorem for $k>3$ candidates.

\begin{cor}[Maskin's Theorem on $k>3$ candidates]\label{IUAkMaskin}
Let $F$ be a SWF with voters $V$ and candidates $C$, satisfying U, MIIA, A and N. If $V$ is an infinite set and $|C|>3$, then:
\begin{enumerate}
\item if $F$ satisfies PR then it is the positive unweighted Borda rule or the tie rule.
\item if $F$ satisfies P then it is the positive unweighted Borda rule.
\end{enumerate}
\end{cor}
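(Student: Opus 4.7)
The plan is to lean on Theorem \ref{IUAk} and Corollary \ref{IUAkmeasurable}. The former gives $f' = g \circ d_1$ for some $g: [1-k, k-1] \to \set{W, T, L}$, and the latter upgrades $F$ to one of the unweighted Borda rules $B_1, B_{-1}, B_0$ as soon as $F'$ is measurable. So for part (1) the two subtasks are verifying measurability under PR and then eliminating the wrong Borda rules; for part (2), by contrast, the Pareto condition will pin down $g$ directly without needing the measurability route.

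For part (1), measurability under PR follows as in the proof of Corollary 4.8 of \autocite{paper1}: PR forces $g$ to be monotone in its single real argument, and any monotone function into the three-element set $\set{W,T,L}$ is a step function and hence Borel measurable. Corollary \ref{IUAkmeasurable} then leaves $F \in \set{B_1, B_{-1}, B_0}$. To rule out $B_{-1}$ I would compare the unanimous relative elections $a_1 \equiv 1$ and $a_2 \equiv -1$ in $A_{i,j}$: since $a_1 \geq a_2$, PR requires $f_{i,j}(a_1) \geq f_{i,j}(a_2)$, whereas under $B_{-1}$ these return $L$ and $W$ respectively — contradiction. The tie rule $B_0$ and the positive Borda rule $B_1$ both trivially satisfy PR, giving the first conclusion.

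For part (2), I would identify $g$ at one positive point and then propagate. For each $b \in [1, 2]$ define $\vec{\alpha}^{(b)} \in A'_k$ by $\alpha_2 = b - 1$, $\alpha_1 = 2 - b$, and $\alpha_d = 0$ otherwise; then $d_1(\vec{\alpha}^{(b)}) = b$ and every voter strictly prefers $c_i$ to $c_j$, so P forces $g(b) = f'(\vec{\alpha}^{(b)}) = W$. To extend this to every $b' > 0$ I would iterate the halving identity $g(b) = g(b/2)$ of Lemma \ref{doubling} (and its general-$k$ analogue embedded in the proof of Lemma \ref{layersk}) in either direction: for any $b' \in (0, k-1]$ there is an integer $m$ with $2^m b' \in [1, 2]$, so $g(b') = g(2^m b') = W$. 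Consistency (Lemma \ref{consistentAk}) then forces $g(-b') = L$ and $g(0) = T$, so $g = \varphi$ and $F = B_1$.

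The only delicate point I anticipate is checking that the doubling/halving chain stays inside $[1-k, k-1]$ at every step: halving is unconditionally safe, and doubling from $b' < 1$ up into the range $[1, 2]$ never exceeds $2 < k-1$ since $k > 3$, so no boundary issue arises. With this in hand, a single Pareto-forced value combined with the doubling invariance and consistency rigidly fixes $g = \varphi$, completing the proof.
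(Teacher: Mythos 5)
Your proposal is correct and follows essentially the same route as the paper: PR gives measurability so that Corollary \ref{IUAkmeasurable} reduces $F$ to one of the three unweighted Borda rules (with $B_{-1}$ excluded by a monotonicity violation), while for P the paper uses exactly your family $\vec{\alpha}^{(b)}$ with $\alpha_2 = b-1$, $\alpha_1 = 2-b$ on $[1,2]$ followed by the doubling identity to force $g = \varphi$. The only difference is that you spell out details (the monotone step-function argument for measurability and the explicit elimination of $B_{-1}$) that the paper delegates to a citation or leaves implicit.
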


On the other hand, when we drop the measurability condition and assume the axiom of choice we find pathological alternatives to the Borda rule. In some sense, without measurability there is no way for an election rule to ``know'' whether it is partitioning Borda scores based on whether they are positive or negative - consistency only requires that the $W$ and $L$ sets are complementary (aside from $0$), closed under addition and map to one another under $x \mapsto -x$, and we have constructed such sets in \autocite{paper1}:

\begin{lem}[Lemma 4.10 in \autocite{paper1}]\label{choiceconstruction}
There exists a subset $M \subset \mathbbm{R}^*$ of the non-zero reals such that
\begin{enumerate}
\item \label{nonborda} $1,-\sqrt{2} \in M$
\item \label{rationaldilation} If $t \in M$ then $qt \in M$ for all positive rational numbers $q$
\item \label{closed} If $s,t \in M$ then $s+t \in M$
\item \label{partition} $M$ and $-M$ partition the set $\mathbbm{R}^*$
\end{enumerate}
\end{lem}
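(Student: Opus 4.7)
The plan is to realise $M$ as the positive cone of a $\mathbbm{Q}$-linear total ordering on $\mathbbm{R}$, viewed as a vector space over $\mathbbm{Q}$: conditions (\ref{rationaldilation})--(\ref{partition}) are precisely the axioms of such a positive cone, and (\ref{nonborda}) only pins down the orientation at two prescribed elements. Since $\sqrt{2}$ is irrational, $\set{1,-\sqrt{2}}$ is $\mathbbm{Q}$-linearly independent, so by Zorn's Lemma I extend it to a Hamel basis $B$ of $\mathbbm{R}$ over $\mathbbm{Q}$, and fix an arbitrary well-ordering $<_B$ on $B$.

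For each $x\in\mathbbm{R}^*$, write $x=\sum_{b\in B} q_b(x)\,b$ uniquely, with finitely many $q_b(x)\in\mathbbm{Q}^*$, and let $b^*(x)$ denote the $<_B$-maximum of the support. Set $c(x)=q_{b^*(x)}(x)\in\mathbbm{Q}^*$ and define
\begin{equation*}
M=\set{x\in\mathbbm{R}^*\,:\,c(x)>0}.
\end{equation*}
Properties (\ref{nonborda}), (\ref{rationaldilation}) and (\ref{partition}) are then immediate: $c(1)=c(-\sqrt{2})=1>0$, so both lie in $M$; scaling by a positive rational $q$ preserves the support and multiplies each $q_b$ by $q$, so $c(qt)=q\,c(t)$ has the sign of $c(t)$; and since $c(-x)=-c(x)$ takes values in $\mathbbm{Q}^*$, the sets $M$ and $-M$ partition $\mathbbm{R}^*$.

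The only step requiring real work is (\ref{closed}). For $s,t\in M$, I compare $b^*(s)$ and $b^*(t)$. If $b^*(s)\neq b^*(t)$, the $<_B$-larger of the two (say $b^*(s)$) dominates the support of $s+t$, and its coefficient there is $c(s)>0$. If instead $b^*(s)=b^*(t)=b^*$, then the coefficient of $b^*$ in $s+t$ is $c(s)+c(t)$, a sum of two positive rationals, hence strictly positive, and $b^*$ remains the top of the support of $s+t$. In either case $s+t\neq 0$ and $c(s+t)>0$, so $s+t\in M$. The principal obstacle is conceptual rather than computational: recognising that the four conditions are exactly those of an ordered $\mathbbm{Q}$-vector-space structure and that lexicographic orderings relative to a Hamel basis provide them -- the use of the axiom of choice (in the guise of Zorn's Lemma) is essential, since no such $M$ can be measurable.
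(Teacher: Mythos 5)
Your construction is correct: the lexicographic positive cone with respect to a well-ordered Hamel basis containing $1$ and $-\sqrt{2}$ verifies all four conditions, and your case analysis for closure under addition is sound (in particular, $b^*(s+t)$ is correctly identified in both the equal and unequal cases, and $s+t\neq 0$ follows since its leading coefficient is nonzero). Note that the present paper does not reprove this lemma but imports it from the earlier work; your argument is the standard construction for such a non-measurable additively closed half of $\mathbbm{R}^*$, and the only cosmetic remark is that a well-ordering of $B$ is more than you need -- any total order suffices, since supports are finite.
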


We are now ready to construct non-Borda SWFs:

\begin{thm}\label{IUAkchoice}
Assuming the axiom of choice, there exists a strongly non-Borda SWF on $V$ infinite and $|C| > 3$ satisfying U, MIIA, A and N.
\end{thm}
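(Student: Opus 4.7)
The plan is to build the non-Borda SWF directly from the pathological set $M$ given by Lemma \ref{choiceconstruction}. Define $g: \mathbb{R} \to \set{W,T,L}$ by setting $g(x) = W$ for $x \in M$, $g(0) = T$, and $g(x) = L$ for $x \in -M$. Let $f' = g \circ d_1$, and let $F$ be the SWF constructed from $f'$ via the converse of Lemma \ref{Ftofprime}, namely $F(e)$ is the unique weak ordering with $\pi_{i,j}(F(e)) = f'(\tau_A \circ \pi_{i,j}(e))$ for all $i \neq j$. My task is to check that this is well-defined (i.e. $f'$ is consistent) and to check that $F$ is strongly non-Borda.

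For condition \ref{consistent:twoway}, note that $d_1(\varrho(\vec{\alpha})) = -d_1(\vec{\alpha})$, and that $g(-x) = -g(x)$ by property \ref{partition} of $M$; together these give $f'(\tau_A \pi_{j,i}(e)) = -f'(\tau_A \pi_{i,j}(e))$. For condition \ref{consistent:threeway}, for any weight-based election $\vec{\varepsilon}$ and any three distinct indices, set $b_\ell = d_1(\pi_{\cdot,\cdot}(\vec{\varepsilon}))$ for the three pairs cyclically; linearity of $d_1$ in $\vec{\alpha}$ together with the cyclic cancellation $\pi_{i,j} + \pi_{j,k} + \pi_{k,i} = 0$ gives $b_1+b_2+b_3 = 0$. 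I then need to verify that $\set{g(b_1),g(b_2),g(b_3)}$ is consistent whenever $b_1+b_2+b_3=0$. If any $b_\ell = 0$, the remaining two are negatives of each other, giving $\set{W,T,L}$ or $\set{T,T,T}$. Otherwise, closure of $M$ under addition and property \ref{partition} prevent all three from lying in $M$ (their sum would be in $M$, not $0$) or all in $-M$; hence exactly one or two lie in $M$, producing $\set{W,W,L}$ or $\set{W,L,L}$, both consistent.

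Having established that $f'$ is consistent, the converse of Lemma \ref{Ftofprime} yields a well-defined SWF $F$. Condition U holds by construction ($f'$ is defined on all of $A'_k$), and MIIA holds because $f'$ factors through $\pi_{i,j}$ in each coordinate. Anonymity A is immediate since $f'$ only depends on $\tau_A \circ \pi_{i,j}(e)$, which in turn depends only on $\tau(e)$. Neutrality N follows from the fact that the same function $g \circ d_1$ is used for every pair $(c_i,c_j)$; a permutation of the candidates merely relabels which pair is being evaluated without altering the function applied.

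Finally, I will show $F$ is strongly non-Borda. By property \ref{nonborda} of $M$, $1 \in M$ gives $g(1) = W$, while $-\sqrt{2} \in M$ gives $\sqrt{2} \in -M$ and hence $g(\sqrt{2}) = L$. Exhibiting explicit $\vec{\alpha},\vec{\alpha}'$ with $d_1(\vec{\alpha})=1$ and $d_1(\vec{\alpha}') = \sqrt{2}$ (for instance by placing appropriate weights on atoms with various $d$-values so as to hit any real Borda score in $[1-k,k-1]$) shows that $F$ does not agree with $\varphi \circ d_1$, $-\varphi \circ d_1$ or the constant $T$, so $F$ is none of $B_1, B_{-1}, B_0$; and because the Borda score ordering is completely incompatible with the scale-invariant partition $M, -M$, $F$ cannot be any weighted Borda rule either. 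The main obstacle in the argument is the consistency check for triples summing to zero, but this is precisely what properties \ref{closed} and \ref{partition} of $M$ were designed to deliver.
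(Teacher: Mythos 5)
Your construction is the same as the paper's: take $M$ from Lemma \ref{choiceconstruction}, set $f' = \varphi_M \circ d_1$ with $\varphi_M$ equal to your $g$, and verify consistency by noting that the three pairwise Borda differences sum to zero and then using properties \ref{closed} and \ref{partition} of $M$ to rule out inconsistent multisets. That part of your argument is correct and matches the paper line for line, and the explicit witnesses you defer are trivial to supply ($\alpha_1 = 1$ gives $d_1 = 1$; $\alpha_1 = 2-\sqrt{2}$, $\alpha_2 = \sqrt{2}-1$ gives $d_1 = \sqrt{2}$, both legitimate since $k \geq 4$).

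The genuine gap is your final sentence ruling out weighted Borda rules. ``Strongly non-Borda'' requires showing $F$ is not weakly Borda with respect to \emph{any} weighted Borda rule $B_w$, where $w$ is a voter-dependent weight function; such a rule orders candidates by a functional $d_w$ that is not a function of the unweighted score $d_1$ at all, so the assertion that ``the Borda score ordering is completely incompatible with the scale-invariant partition $M, -M$'' does not engage with the actual quantifier. Your two counterexamples only exclude $B_1$, $B_{-1}$ and $B_0$ (and weak-Borda agreement with those three); they say nothing a priori about an exotic $w$. The paper closes this gap by invoking Lemma \ref{weightinglemma} (Lemma 2.25 of \autocite{paper1}), which uses anonymity on a ballot domain to show that any SWF satisfying A that is weakly Borda with respect to some weighted Borda rule is already weakly Borda with respect to an unweighted one, reducing the weighted case to the two counterexamples you did exhibit. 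Without that reduction, or a direct argument against arbitrary weight functions, your proof establishes only that $F$ differs from the three unweighted Borda rules, which is weaker than the stated theorem.
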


\begin{proof}
We take the construction of $M$ given in Lemma \ref{choiceconstruction}. We define $\varphi_M: [1-k,k-1] \rightarrow \set{W,T,L}$ as follows:

\begin{equation}
\varphi_M(d) = \begin{cases}
W & d \in M \\
T & d=0 \\
L & d \in -M
\end{cases}
\end{equation}

We let our SWF $F$ be Borda-determined with $f' = \varphi_M \circ d_1$. It remains to check that $f'$ is strongly non-Borda and consistent.

The relative election $\vec{\alpha}$ given by $\alpha_1 = 1$ and $\alpha_i=0$ elsewhere has $d_1(\vec{\alpha}) = 1 \in M$, so $f'(\vec{\alpha}) = W$. Therefore, for $\pi_{i,j}(\vec{\varepsilon}) = \vec{\alpha}$, $F$ ranks $c_i$ above $c_j$ but $B_{-1}$ ranks $c_j$ above $c_i$. So $F$ is not the negative unweighted Borda rule, or weakly Borda with respect to it, or the tie rule. On the other hand, the relative election $\vec{\alpha}$ given by $\alpha_1 = 2-\sqrt{2}$, $\alpha_2 = \sqrt{2}-1$ and $\alpha_i = 0$ elsewhere has $d_1(\vec{\alpha}) = \sqrt{2} \in -M$, so $f'(\vec{\alpha}) = L$.  Therefore, for $\pi_{i,j}(\vec{\varepsilon}) = \vec{\alpha}$, $F$ ranks $c_j$ above $c_i$ but $B_1$ ranks $c_i$ above $c_j$. So $F$ is not the positive unweighted Borda rule, or weakly Borda with respect to it. Finally, we recall the following result from \autocite{paper1}:

\begin{lem}[Lemma 2.25 in \autocite{paper1}]\label{weightinglemma}
For an election domain $E$ containing a ballot domain with ballot $\mathfrak{B}$, if a SWF $F$ fulfils condition A and is weakly Borda with respect to a Borda rule $B_w$ then it is weakly Borda with respect to an unweighted Borda rule $B$.
\end{lem}

The definition of a ballot domain can be found in \autocite{paper1}; all that we need to know here is that the unrestricted domain is a ballot domain. Lemma \ref{weightinglemma} allows us to conclude that $F$ is not weakly Borda with respect to any weighted Borda rule, so in fact it is strongly non-Borda.

For any election $e$ and for any three candidates $c_{i_1}, c_{i_2}, c_{i_3}$ we let $\tau_A \circ \pi_{i_j,i_{j+1}}(\vec{\varepsilon} = \vec{\alpha}_j$. Then $b_j = d_1(\vec{\alpha}_j) = b_1(c_{i_j},e) - b_1(c_{i_{j+1}},e)$, so $b_1+b_2+b_3 = 0$. Therefore $\set{f'(\vec{\alpha}_j)}_j = \set{\varphi_M(b_j)}_j$. This multiset can only be consistent. If any $b_j=0$ then $\varphi_M(b_j)=T$, and $b_{j+1} = -b_{j+2}$ so either $b_{j+1}=0$ and $\set{\varphi_M(b_j)}_j=\set{T,T,T}$, or $b_{j+1}$ and $b_{j+2}$ are in $M$ and $-M$ in some order and $\set{\varphi_M(b_j)}_j \set{W,T,L}$. On the other hand, if no $b_j=0$ then some $b_j$ and $b_{j+1}$ map to the same $X \in W,L$; in other words, both values are in $M$, or both are in $-M$. In either case, $b_j+b_{j+1}$ is also in that set by condition \ref{closed} from Lemma \ref{choiceconstruction}, so $b_{j+2}$ is in the opposite set, and we get either $\set{\varphi_M(b_j)}_j = \set{W,W,L}$ or $\set{\varphi_M(b_j)}_j = \set{W,L,L}$.

Hence $f'$ is consistent and we have constructed the required strongly non-Borda SWF.
\end{proof}

In this section, we have extended the domain enough to force all elections to be SWFs, but we have relied heavily on the infinite set of voters (insofar as we have subdivided sets of voters into sets of any measure of our choosing).

\section{The unrestricted domain for more than three candidates and finite voters}\label{FinUAk}

In the finite case, we are forced to make different arguments due to limits on the divisibility of $n$. For example, Lemma \ref{2candsym} is meaningless if $n$ is odd. Nonetheless, we continue to use the method of iteratively building up the set of tying elections to the full set of Borda ties, using transitive election tables. In this case, for each $n$ the sets $E'_k = E'_{k,n}$ and $A'_k = A'_{k,n}$ are slightly different depending on $n$, but we normally suppress this notation, as it will be clear from context.

We will also define $B'_k \subset A'_k$ to be the vectors $\vec{\alpha} \in A'_k$ for which $d_1(\vec{\alpha})=0$.

Again, we know that a SWF $F$ satisfying conditions A, MIIA and N corresponds to a consistent function $f': A'_k \rightarrow \set{W,T,L}$, and our task is to prove that such an $f'$ must be of the form

\begin{equation}
f'(\vec{\alpha}) = \begin{cases}
X & d_1(\vec{\alpha}) > 0 \\
T & d_1(\vec{\alpha}) = 0 \\
-X & d_1(\vec{\alpha}) < 0
\end{cases}
\end{equation}

As in section \ref{InfUAk}, we begin with the case of $k=4$, and we suppress the $k$ notation for now, writing $A' = A'_4$ and so on. We begin by noticing some subsets of $A'$ on which the behaviour of $f'$ is familiar.

For $k \leq n/3$ a non-negative integer such that $2 \mid n-k$, we let $\vec{\alpha}_{(t,k)} = (0, t, \frac{n-k}{2}-t, \frac{n+3k}{2}-t, t-k, 0)$ for $k \leq t \leq \frac{n-k}{2}$.

\begin{lem}\label{diagonals}
For $2 \mid n-k$ and $k \leq n/3$, and for $k \leq t \leq \frac{n-k}{2}$, the vector $\vec{\alpha}_{(t,k)}$ has:
\begin{enumerate}
\item \label{diagonalsX} $f'(\vec{\alpha}_{(t,k)}) = f'(\vec{\alpha}_{(k,k)})$ for $t < \frac{n+3k}{6}$;
\item \label{diagonalsrX} $f'(\vec{\alpha}_{(t,k)}) = -f'(\vec{\alpha}_{(k,k)})$ for $t > \frac{n+3k}{6}$; and
\item \label{diagonalsT} $f'(\vec{\alpha}_{(t,k)}) = T$ for $t = \frac{n+3k}{6}$
\end{enumerate}
\end{lem}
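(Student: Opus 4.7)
A direct computation gives $d_1(\vec{\alpha}_{(t,k)}) = 0$ for every admissible $(t,k)$, so the whole diagonal lies in the Borda-tie set $B'_4$. For part (\ref{diagonalsT}) I would exploit cyclic symmetry among the first three candidates: write $t^* = (n+3k)/6$ and construct the election $e$ on $n$ voters in which $(n+3k)/6$ voters each cast one of the three cyclic rotations $c_1c_2c_3c_4$, $c_2c_3c_1c_4$, $c_3c_1c_2c_4$, and $(n-3k)/6$ voters each cast one of the three reverse-cyclic rotations $c_2c_1c_3c_4$, $c_3c_2c_1c_4$, $c_1c_3c_2c_4$. A short count shows $\tau_A(\pi_{1,2}(e)) = \vec{\alpha}_{(t^*,k)}$, while the ballot multiset is preserved by the candidate permutation $\rho$ that cycles $c_1 \mapsto c_2 \mapsto c_3 \mapsto c_1$ and fixes $c_4$, because $\rho$ permutes the three cyclic ballots among themselves and likewise for the reverse-cyclic ballots. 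Applying A and N gives $F(e) = \rho(F(e))$, which forces the unique $\rho$-invariant weak ordering of $\{c_1,c_2,c_3\}$, namely $c_1 \sim c_2 \sim c_3$, and hence $f'(\vec{\alpha}_{(t^*,k)}) = T$.

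For parts (\ref{diagonalsX}) and (\ref{diagonalsrX}) I would propagate the value of $f'$ along the diagonal using transitive election tables. For each consecutive pair $t, t+1$ of integer parameters lying on the same side of $t^*$, the plan is to build a TET whose two rows yield $\vec{\alpha}(w,M_1) = \vec{\alpha}_{(t,k)}$ and $\vec{\alpha}(w,M_2) = \vec{\beta}$ for some smaller cyclically symmetric tying vector $\vec{\beta}$ (produced by the same three-candidate symmetric ballot trick used for part (\ref{diagonalsT})), with entrywise sum $\vec{\alpha}(w,M_1+M_2) = \vec{\alpha}_{(t+1,k)}$; Corollary \ref{transitiveelectiontable} then forces $f'(\vec{\alpha}_{(t+1,k)}) = f'(\vec{\alpha}_{(t,k)})$. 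Iterating establishes that $f'$ is constant on each integer side of $t^*$, and a final TET which combines a row on one side of $t^*$ with a row on the other whose entrywise sum is $\vec{\alpha}_{(t^*,k)}$ uses the known central value $T$ to force the two flanking values to be negatives of one another.

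The main obstacle will be the explicit choice of TET in the propagation step: all weights must be non-negative integers summing to $n$, and every entry of $M_1$, $M_2$, and $M_1+M_2$ must lie in $D_4 = \{-3,-2,-1,1,2,3\}$. The hypotheses $2 \mid n-k$ and $k \leq n/3$ are precisely the arithmetic conditions that make this integer bookkeeping go through, but separate sub-constructions are likely needed depending on whether $t$ is close to the endpoints $k$ or $(n-k)/2$ or to the centre $t^*$, since the room available to perturb a row by a cyclic tying vector varies across the range.
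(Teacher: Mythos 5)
Your symmetry argument for part (\ref{diagonalsT}) is correct and is genuinely different from (and arguably cleaner than) what the paper does for that case: the election with equal numbers of voters on each cyclic rotation of $c_1c_2c_3$ (and on each reverse rotation) is fixed by $\tau$ under the $3$-cycle, so A and N force a three-way tie; the only slip is that with your assignment of multiplicities the resulting relative election is $\varrho(\vec{\alpha}_{(t^*,k)})$ rather than $\vec{\alpha}_{(t^*,k)}$, which is harmless since $T=-T$. The gap is in the propagation plan for parts (\ref{diagonalsX}) and (\ref{diagonalsrX}). In a transitive election table the second row must itself aggregate to a vector of total weight $n$ with every entry in $D_4$, so your ``smaller cyclically symmetric tying vector'' $\vec{\beta}$ has to be a tying relative election of the \emph{entire} electorate whose value $T$ is already known. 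At this point in the development nothing is yet known about $f'$ on Borda ties (Lemma \ref{diagonals} is an ingredient in proving that), so the only tying vectors available to you are the symmetry-derived ones: the three-candidate trick partitions $V$ into six classes of sizes $p,p,p,q,q,q$ and hence requires $3 \mid n$, and palindromic vectors from the $c_1 \leftrightarrow c_2$ swap require $\alpha_i = \alpha_{-i}$, hence $2 \mid n$. For $n$ coprime to $6$ your supply of known tying rows is empty and the induction cannot start. Separately, your final linking step needs a row pair whose entrywise sum realises $\vec{\alpha}_{(t^*,k)}$, which only exists when $t^* = \frac{n+3k}{6}$ is an integer; when it is not (e.g.\ $n=10$, $k=0$), the two constant values on either side of $t^*$ are never related and the antisymmetry in part (\ref{diagonalsrX}) is unproven.

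The paper avoids both problems by not using any pre-known tying vector at all: it sets $g(s) = f'(\vec{\alpha}_{(s+k,k)})$ and, for every triple with $s_1+s_2+s_3 = \frac{n-3k}{2}$, exhibits a single two-row table whose rows aggregate to $\vec{\alpha}_{(s_1+k,k)}$ and $\vec{\alpha}_{(s_2+k,k)}$ and whose sum aggregates to $\varrho(\vec{\alpha}_{(s_3+k,k)})$. Lemma \ref{transitivethreeway} only needs one of the two rows to carry the value $X$ (the other may be $X$ or $T$), so this shows $\{g(s_1),g(s_2),g(s_3)\}$ is always a consistent multiset, and Lemma \ref{numberlineelection} then delivers the threshold structure in one stroke, with no divisibility hypotheses beyond those in the statement. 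If you want to salvage your route, you should replace the ``known tying row'' by a second diagonal vector in the same way, i.e.\ prove the three-term consistency relation directly rather than a two-term propagation.
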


To prove this, we will require the following result from \autocite{paper1}:

\begin{lem}\label{numberlineelection}
For $\ell \geq 0$ let $I = \set{0, 1, \dots, \ell}$ and suppose $g: I \rightarrow \set{W,T,L}$. Moreover, let $m \in \mathbbm{Z}$ be such that $\ell \leq m \leq 2\ell$, and suppose that for all $i, j, k$ with $i+j+k = m$, the multiset $\set{g(i), g(j), g(k)}$ is consistent. Then $g$ is given by $g(i) = \varphi(\kappa(i-m/3))$ for some $\kappa \in \mathbbm{R}$.
\end{lem}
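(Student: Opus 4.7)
The plan is to show that any such $g$ is either constant $T$ or $\pm\varphi$ of $(i - m/3)$ — essentially a discrete version of the proof of Lemma \ref{measurable}. Two preliminary observations pin down the skeleton. First, if $3 \mid m$ then $m/3 \in I$ (since $m/3 \leq 2\ell/3 \leq \ell$), and the triple $(m/3, m/3, m/3)$ sums to $m$; the only consistent multiset with three equal entries is $\{T, T, T\}$, forcing $g(m/3) = T$. Second, for any $i \in I$ with $m - 2i \in I$ (equivalently $(m-\ell)/2 \leq i \leq m/2$), consistency of $\{g(i), g(i), g(m - 2i)\}$ forces $g(m - 2i) = -g(i)$ under the convention $-T = T$. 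Geometrically, the involution $\sigma \colon i \mapsto m - 2i$ has $m/3$ as unique fixed point, and by this reflection identity it swaps $W$ with $L$ and fixes $T$.

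Next I would case-split. If $g \equiv T$, take $\kappa = 0$ and we are done. Otherwise, by replacing $g$ with $-g$ if necessary (still consistent, as the condition is symmetric under $W \leftrightarrow L$), I may assume there is $i^* \in I$ with $g(i^*) = W$; and by replacing $g$ with $i \mapsto g(\ell - i)$ if necessary (which preserves the hypothesis with $m$ replaced by $3\ell - m \in [\ell, 2\ell]$), I may further assume $i^* > m/3$. The goal then reduces to two propagation claims: (a) there are no off-centre $T$-points, and (b) there are no $W$-points below $m/3$. For (a), iterating $\sigma$ on a hypothetical $T$-point $i_0 \neq m/3$ yields a chain of $T$-points whose distances from $m/3$ double each step; combining these with $i^*$ via triples of type $(T, W, \cdot)$ and $(T, T, \cdot)$ eventually produces an inconsistent multiset such as $\{T, T, W\}$. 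For (b), a hypothetical $W$-point $i_W < m/3$ combined with $i^* > m/3$ via the bridging triple $(i_W, i^*, m - i_W - i^*)$ together with the reflection identity produces multiple $L$-points, and among these I can find three summing to $m$, yielding an inconsistent $\{L, L, L\}$; the small example $\ell = 5, m = 6, i_W = 0, i^* = 5$ illustrates the mechanism, forcing $g(1) = g(4) = L$ and then failing on the triple $(1, 1, 4)$.

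Once (a) and (b) are in place, every $i \in I$ below $m/3$ satisfies $g(i) = L$ (not $T$ by (a), not $W$ by (b)), and every $i$ above $m/3$ satisfies $g(i) = W$ (otherwise its $\sigma$-reflection would be a $W$-point below $m/3$, contradicting (b)); this yields $g(i) = \varphi(\kappa(i - m/3))$ with $\kappa$ of the sign determined by the WLOG reductions. The main obstacle is the propagation in claim (b): one must choose chains of consistency inferences so that every triple $(i, j, m - i - j)$ used stays in $I$, i.e.\ $m - \ell \leq i + j \leq m$. The hypothesis $\ell \leq m \leq 2\ell$ is precisely the strength needed to guarantee enough valid bridging triples, and the doubling behaviour of $\sigma$ keeps chain lengths bounded (at worst $O(\log \ell)$); but the combinatorial bookkeeping is the substantive work of the argument.
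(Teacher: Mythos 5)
First, note that this paper does not actually prove the lemma: it is quoted as a result from \autocite{paper1} and invoked in the proof of Lemma \ref{diagonals}, so there is no in-paper argument to compare yours against. Judged on its own terms, your skeleton is right and matches what a discrete analogue of Lemma \ref{measurable} should look like: the central value $g(m/3)=T$ when $3\mid m$, the reflection identity $g(m-2i)=-g(i)$, the two reductions ($W\leftrightarrow L$ and $i\mapsto \ell-i$, with $m\mapsto 3\ell-m$ staying in $[\ell,2\ell]$), and the final assembly from claims (a) and (b) are all correct, as is your worked instance of (b) with $\ell=5$, $m=6$.

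However, there is a genuine gap: claims (a) and (b) are the entire content of the lemma, and neither is proved. The mechanism you sketch for (a) does not work as described. The reflection $\sigma(i)=m-2i$ only yields a constraint when $m-2i\in\set{0,\dots,\ell}$, i.e.\ when $(m-\ell)/2\le i\le m/2$, and since each application doubles the distance from $m/3$, the ``chain of $T$-points'' typically has length one or two before leaving $I$ (e.g.\ $\ell=3$, $m=4$, $i_0=3$ gives $\sigma(i_0)=-2$ immediately). After that you need two $T$-points $t_1,t_2$ and a non-$T$ point $x$ with $t_1+t_2+x=m$ all inside $I$, and nothing in the sketch guarantees such a triple exists. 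For (b), your example leans on the central $T$-point at $m/3$, which exists only when $3\mid m$; the case $3\nmid m$ is not addressed. Finally, in the assembly step the deduction ``$g(i)=L$ for some $i>m/3$ would reflect to a $W$-point below $m/3$'' fails for $i>m/2$, where $\sigma(i)\notin I$; this is patchable (use a triple $(i,j,m-i-j)$ with $j$ and $m-i-j$ both at most $m/3$, which exists precisely because $i>m/3$ and $0\le m-i\le\ell$), but the patch is not in your write-up. In short, you have correctly identified what must be proved and the available tools, but the propagation arguments that make the lemma true --- and the verification that $\ell\le m\le 2\ell$ supplies enough in-range triples to run them --- are missing.
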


\begin{proof}[Proof of Lemma \ref{diagonals}]
For all $0 \leq s \leq \frac{n-3k}{2}$ we let $g(s) = f'(\vec{\alpha}_{(s+k,k)})$. We will show that whenever $s_1 + s_2 + s_3 = \frac{n-3k}{2}$ (note that this is always an integer), the multiset $\set{g(s_1),g(s_2),g(s_3)}$ is consistent. Suppose without loss of generality that either $g(s_1)=g(s_2)=X$ or that $g(s_1)=X$ and $g(s_2)=T$. Then the following is an $X$-transitive election table:

\begin{equation}\begin{aligned}
\left(\begin{array}{c c c c c c}
(s_1+k) & (s_2+k) & (s_3+k) & (s_1) & (s_2) & (s_3) \\
\hline
2 & -1 & -1 & -2 & 1 & 1 \\
-1 & 2 & -1 & 1 & -2 & 1
\end{array}
\right)
\end{aligned}\end{equation}

because $f'(\vec{\alpha}(w,M_1)) = g(s_1)$ and $f'(\vec{\alpha}(w,M_2)) = g(t_2)$. Therefore $f'(\vec{\alpha}(w,M_1+M_2)) = X$, so $g(t_3) = f'(\varrho(\vec{\alpha}(w,M_1+M_2))) = -X$, giving the consistent multiset required.

Therefore we can apply Lemma \ref{numberlineelection}, setting $\ell = m = \frac{n-3k}{2}$, and conclude that $g$ is given by $g(i) = \varphi(\kappa(i-m/3))$ for some $\kappa \in \mathbbm{R}$. Then the corresponding values of $f'$ on $\vec{\alpha}_{(t,k)}$ are as described above, and the proof is complete.
\end{proof}

We now need to show that $X=T$ for each of these families of vectors; for brevity we say that $X_a = f'(\vec{\alpha}_{(a,a)})$ (recall that this is defined for $2 \mid n-a$). Since $f'(\varrho(\vec{\alpha})) = -f'(\vec{\alpha})$, this will also cover the families of vectors of the form $\vec{\alpha}_{t,-k} = (0,t-k,\frac{n+3k}{2}-t,\frac{n-k}{2}-t,t,0)$.

\begin{lem}\label{twelvesteptyingloop}
$X_a = T$ for $2 \mid n-a$ and $a \leq n/5$.
\end{lem}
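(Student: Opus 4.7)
The plan is to construct a tying transitive election table whose twelve rows each evaluate to $X_a$ or $T$ under $f'$, and then apply Corollary \ref{tyingelectiontable} to force $X_a = T$. The rows will be chosen so that each induced vector $\vec{\alpha}(w, M_i)$ matches one of the vectors $\vec{\alpha}_{(t, a)}$ or $\vec{\alpha}_{(t, -a)}$ analysed in Lemma \ref{diagonals}, with the parameter $t$ placed carefully in each case so that Lemma \ref{diagonals} pins down the $f'$-value.

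To arrange that every row evaluates to $X_a$ or $T$, I would draw rows from two pools: vectors $\vec{\alpha}_{(t, a)}$ with $a \leq t \leq \frac{n+3a}{6}$, which yield $X_a$ or $T$ by parts \ref{diagonalsX} and \ref{diagonalsT} of Lemma \ref{diagonals}; and vectors $\vec{\alpha}_{(t, -a)} = \varrho(\vec{\alpha}_{(t, a)})$ with $\frac{n+3a}{6} \leq t \leq \frac{n-a}{2}$, which by the identity $f' \circ \varrho = -f'$ together with part \ref{diagonalsrX} of Lemma \ref{diagonals} also yield $X_a$ or $T$. A twelve-row arrangement drawn from these two pools then needs to be found whose row-wise sum $M_0 = \sum_{i=1}^{12} M_i$ vanishes identically, so that each voter's twelve contributions cancel out.

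The hypothesis $a \leq n/5$ appears to enter precisely to guarantee that both ranges $[a, \frac{n+3a}{6})$ and $(\frac{n+3a}{6}, \frac{n-a}{2}]$ contain enough integer $t$-values to host the twelve required rows, while simultaneously allowing the supporting weight vector $w$ to consist of non-negative integers summing to $n$. The additional slack compared with the $a \leq n/3$ hypothesis of Lemma \ref{diagonals} is what makes room for the symmetric loop.

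The main obstacle will be writing down the explicit $12 \times t$ matrix $M$ over $D_4$ together with its weight vector $w$, and then carrying out the four standard verifications: that the $w_j$ are non-negative integers summing to $n$; that each induced row $\vec{\alpha}(w, M_i)$ really is one of the $\vec{\alpha}_{(t_i, \pm a)}$ from Lemma \ref{diagonals} with the right $t_i$; that each parenthesised partial sum demanded by the parenthesisation $P$ stays within $D_4^t$; and that the full sum $M_0$ is the zero vector. Once those checks are in hand, Corollary \ref{tyingelectiontable} immediately yields $X_a = T$.
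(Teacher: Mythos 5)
Your overall strategy is the right one --- build a twelve-row tying election table whose rows all evaluate to $X_a$ (or $T$) and whose rows sum to $\vec{0}$, then invoke Corollary \ref{tyingelectiontable} --- but the proposal stops exactly where the proof begins. The entire mathematical content of this lemma is the explicit $12\times 7$ matrix $M$ over $D_4$, the weight vector $w=(a,a,a,a,a,\ell,\ell)$ with $\ell=\frac{n-5a}{2}$, and the parenthesisation $P$ guaranteeing that every intermediate partial sum stays inside $D_4^7$. You acknowledge this as ``the main obstacle'' and defer it; but the existence of such a table is not at all automatic, and without exhibiting one the argument proves nothing. Note also that the zero-sum condition together with the partial-sum condition is quite rigid: one must order and bracket the twelve rows so that no column's running total ever leaves $\{-3,\dots,-1,1,\dots,3\}\cup\{0\}$ at a bracket boundary, which is why the paper specifies a particular nested $P$.

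Two further points of divergence from the paper. First, the paper's construction is simpler than your two-pool plan: every one of the twelve rows induces the \emph{same} vector $\vec{\alpha}_{(a,a)}=(0,a,a+\ell,3a+\ell,0,0)$, so each row evaluates to $X_a$ by the very definition of $X_a$, and Lemma \ref{diagonals} is not needed to evaluate the rows at all --- the twelve rows are just twelve different ways of distributing the relative votes $\{-1,-1,-1,+1,+2\}$ (weighted by $a$) and $\{-1,+1\}$ (weighted by $\ell$) across seven columns so that the columns cancel. Second, your explanation of the hypothesis $a\le n/5$ is off: it is not about fitting twelve distinct $t$-values into two intervals, but simply about making $\ell=\frac{n-5a}{2}$ a non-negative (and, by the parity hypothesis $2\mid n-a$, integer) weight. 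If you want to complete your version, you must actually write down the matrix and carry out the four verifications you list; until then there is a genuine gap.
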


\begin{proof}
Let $\ell = \frac{n-5a}{2}$ and note that $\vec{\alpha}_{(a,a)} = (0,a,a+\ell,3a+\ell,0,0)$ has $f'(\vec{\alpha}_{(a,a)})=X_a$. We will construct an $X_a$-transitive election table $(w,M,P)$ with $m=12$ and $t=7$, and $\vec{\alpha}(w,M_r) = \vec{\alpha}_{(a,a)}$ for each row $M_r$. We set:

\begin{equation*}
\begin{aligned}
M &= \left(\begin{array}{c c c c c | c c }
(a) & & & & & (\ell) & \\
\hline
-1 & -1 & -1 & +1 & +2 & -1 & +1 \\
-1 & -1 & +2 & +1 & -1 & -1 & +1 \\
-1 & -1 & +2 & -1 & +1 & -1 & +1 \\
-1 & +1 & -1 & -1 & +2 & +1 & -1 \\
+2 & +1 & -1 & -1 & -1 & +1 & -1 \\
-1 & -1 & +1 & +2 & -1 & -1 & +1 \\
+2 & -1 & -1 & +1 & -1 & +1 & -1 \\
-1 & -1 & -1 & +2 & +1 & -1 & +1 \\
-1 & +2 & -1 & -1 & +1 & -1 & +1 \\
-1 & +2 & +1 & -1 & -1 & +1 & -1 \\
+2 & -1 & +1 & -1 & -1 & +1 & -1 \\
+2 & +1 & -1 & -1 & -1 & +1 & -1
\end{array}\right)
 \\ \\
P &= (((((((x_1 x_2) x_3) (x_4 x_5)) x_6) x_7) (x_8 x_9)) ((x_{10} x_{11}) x_{12}))
\end{aligned}
\end{equation*}

It is easy to check column-wise that the partial sums given by matching parentheses in $P$ always give values in $D_4$. On the other hand, $\sum_{k=1}^{12} M_k = \vec{0}$ so by Lemma \ref{tyingelectiontable} we have $X_a=T$ as required.
\end{proof}

In particular, $X_0 = T$ for $n$ even and $X_1=T$ for $n \geq 5$ odd.

\begin{lem}\label{kleqn4}
$X_a = T$ for $a \leq n/4$ and $5 \leq n$ or $n$ even.
\end{lem}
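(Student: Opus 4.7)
The plan is to extend Lemma \ref{twelvesteptyingloop} (which gave $X_a = T$ for $a \leq n/5$) to the larger range $a \leq n/4$. For $a \leq n/5$ nothing new is needed, so the genuinely new range is $n/5 < a \leq n/4$; there the quantity $\ell = (n-5a)/2$ used as a weight in the previous twelve-row table becomes negative, so a modified construction is required.

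The strategy is the same in outline: I would construct an $X_a$-transitive election table $(w, M, P)$ with $M_0 = \vec{0}$, so that Corollary \ref{tyingelectiontable} forces $X_a = T$. What makes the larger range accessible is the richer supply of known-tying vectors now available. Lemma \ref{twelvesteptyingloop} gives $X_b = T$ for every $b \leq n/5$, and then Lemma \ref{diagonals} promotes this to $f'(\vec{\alpha}_{(t,b)}) = T$ on the entire diagonal family for each such $b$. So rows of the new TET may be chosen to yield $\vec{\alpha}_{(a,a)}$ (the rows contributing the $X_a$ value), or any $\vec{\alpha}_{(t,b)}$ with $b \leq n/5$ (contributing $T$), together with symmetry-based ties of the type used already in Lemma \ref{twelvesteptyingloop}. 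Concretely, I would take a weight vector with four columns of weight $a$ instead of five and add further columns whose weights sum to $n - 4a \geq 0$, chosen integer under $2 \mid n-a$; the side condition $n \geq 5$ or $n$ even is what guarantees that the residual weights can be carved out sensibly. The rows would repeat the cyclic permutation patterns of Lemma \ref{twelvesteptyingloop} on the weight-$a$ columns, and fill the remaining columns so that each row lands in one of the two known-tying families.

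The main obstacle is not conceptual but combinatorial: the bookkeeping of checking that each row $\vec{\alpha}(w, M_i)$ lies in the intended family, that every parenthesised partial sum stays in $D_4^t$, that all weights are non-negative integers under the stated parity conditions, and that the total column sum is zero. This is an intricate generalisation of the verification at the end of Lemma \ref{twelvesteptyingloop}, but the underlying mechanism is identical, and the extra slack $n - 4a \geq 0$ (as opposed to $n - 5a \geq 0$) provides the flexibility that the additional $T$-giving rows require.
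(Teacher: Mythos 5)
Your proposal correctly identifies the available ingredients (Lemma \ref{twelvesteptyingloop} plus Lemma \ref{diagonals} give $f'(\vec{\alpha}_{(t,b)})=T$ for all $t$ whenever $b\leq n/5$, and in particular for $b\in\set{0,1}$), but it stops exactly where the lemma's content begins. The entire substance of this result is the explicit table: you assert that one can take four weight-$a$ columns, ``repeat the cyclic permutation patterns,'' and ``fill the remaining columns so that each row lands in one of the two known-tying families'' with total column sum zero, but no such table is exhibited and its existence is not obvious. Note also that Lemma \ref{twelvesteptyingloop} does not mix $X_a$-rows with $T$-rows or use any symmetry-based ties --- all twelve of its rows realise $\vec{\alpha}_{(a,a)}$ and the zero total does all the work --- so the mixed zero-sum table you describe is a genuinely new object whose feasibility (row multiplicities landing exactly on $\vec{\alpha}_{(a,a)}$ or on some $\vec{\alpha}_{(t,b)}$, integrality and non-negativity of the residual weights, partial sums staying in $D_4$) is precisely what needs to be checked and is not checked. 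As written, this is a plan for a proof, not a proof.

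For comparison, the paper avoids having to hit $\vec{0}$ at all. It chains two two-row tables: the first combines a row realising $\vec{\alpha}_{(a,a)}$ (value $X_a$) with a row realising the known tying vector $\vec{\beta}=\vec{\alpha}_{(\frac{n-a}{2},b)}$, producing a vector $\vec{\gamma}$ with $f'(\vec{\gamma})=X_a$; the second combines two copies of $\vec{\gamma}$ and lands on $\vec{\delta}=\vec{\alpha}_{(\frac{n-b}{2},-b)}=\varrho(\vec{\alpha}_{(\frac{n-b}{2},b)})$, whose value is $-T=T$, forcing $X_a=T$. Terminating on the reversal of a known tying vector rather than on $\vec{0}$ is what keeps the weight bookkeeping manageable in the range $n/5<a\leq n/4$; if you want to salvage your single-table approach you would need to produce and verify the table explicitly, which I expect to be at least as delicate as the twelve-row construction it generalises.
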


\begin{proof}
Let $b \in \set{0,1}$ be chosen such that $2 \mid n-b$. Define $\vec{\beta} = \vec{\alpha}_{(\frac{n-a}{2},b)}$. We know from Lemma \ref{twelvesteptyingloop} that $f'(\vec{\beta}) = T$. Therefore the following is an $X_a$-transitive election table:

\begin{equation}\begin{aligned}
\left(\begin{array}{c c c c c c}
(\frac{a-b}{2}) & (\frac{n-a}{2}) & (\frac{a+b}{2}) & (\frac{n-3a}{2}) & (b) & (a-b) \\
\hline
2 & -1 & 2 & 1 & -1 & -1 \\
1 & 2 & -1 & -2 & -1 & -2
\end{array}\right)
\end{aligned}\end{equation}

This is because $f(\vec{\alpha}(w, M_1)) = f'(\vec{\alpha}_{(a,a)}) = X_a$ and $f(\vec{\alpha}(w, M_2)) = f'(\vec{\beta}) = T$. The fact that each column has non-negative integer multiplicity follows from the fact that $n$, $a$ and $b$ have the same parity and $b \leq 1$. Thus Lemma \ref{transitiveelectiontable} gives that $\vec{\alpha}(w,M_1+M_2) = (\frac{a-b}{2}, 0, \frac{n+b}{2}, \frac{n-3a}{2}, b, a-b) = \vec{\gamma}$ has $f'(\vec{\gamma}) = X_a$.

Therefore the following is also an $X_a$-transitive election table:

\begin{equation}\begin{aligned}
\left(\begin{array}{c c | c c | c c | c | c}
(a-b) & & (b) & & (\frac{a-3b}{2}) & & (\frac{n-2a+3b}{2}) & (\frac{n-4a+3b}{2}) \\
\hline
-3 & 1 & -2 & 3 & 3 & -1 & 1 & -1 \\
1 & -1 & 3 & -2 & -1 & 3 & 1 & -1
\end{array}\right)
\end{aligned}\end{equation}

In this instance, $\vec{\alpha}(w, M_1)=\vec{\alpha}(w, M_2) = \vec{\gamma}$. The multiplicities of each column are clearly integers as $n$, $a$ and $b$ have the same parity; they are non-negative since $a \leq n/4$ and as long as $a \geq 3b$; this is immediate for $a \geq 3$ and for $a=2$ since then $b=0$; for $a=1$, we already know that $X_1 = T$ since $n \geq 5$ so we can apply Lemma \ref{twelvesteptyingloop}.

Thus Lemma \ref{transitiveelectiontable} gives that $\vec{\alpha}(w,M_1+M_2) = (0, \frac{n-3b}{2}, 2b, 0, \frac{n-b}{2}, 0) = \vec{\delta}$ has $f'(\vec{\delta})=X_a$. But $\vec{\delta} = \vec{\alpha}_{(\frac{n-b}{2},-b)}$, so $X_a = f'(\vec{\delta}) = f'(\vec{\alpha}_{(\frac{n-b}{2},-b)}) = \varrho(f'(\vec{\alpha}_{(\frac{n-b}{2},b)})) = -T=T$. Hence $X_a = T$ as required.
\end{proof}

Finally we extend this result so that $X_a=T$ everywhere.

\begin{lem}\label{kleqn3}
$X_a = T$ for $a \leq n/3$ and $5 \leq n$ or $n$ even.
\end{lem}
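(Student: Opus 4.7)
The plan is to extend the coverage of Lemma \ref{kleqn4} from $a \leq n/4$ to the full range $a \leq n/3$. The new resource unlocked by Lemma \ref{kleqn4} is that, combined with Lemma \ref{diagonals}, we now have $f'(\vec{\alpha}_{(t,b)}) = T$ for every non-negative integer $b \leq n/4$ with $2 \mid n - b$ and every valid $t$ in the corresponding diagonal. This is a substantial enlargement of the family of known tie vectors compared to the restricted choice $b \in \{0,1\}$ used in the proof of Lemma \ref{kleqn4}.

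For $a$ with $n/4 < a \leq n/3$ and $2 \mid n - a$, I would select an auxiliary parameter $b$ with $b \equiv n \pmod{2}$ in the interval $\left[(4a-n)/3,\ a/3\right]$. This interval is non-empty precisely when $a \leq n/3$, and its upper endpoint $a/3 \leq n/9 < n/4$ guarantees $X_b = T$ via Lemma \ref{kleqn4}. With $b$ fixed, I would then reuse verbatim the two-step construction from the proof of Lemma \ref{kleqn4}: the first table combines the row $\vec{\alpha}_{(a,a)}$ (with $f' = X_a$) with the known-tie row $\vec{\alpha}_{((n-a)/2,\,b)}$ to produce an intermediate vector $\vec{\gamma}$ with $f'(\vec{\gamma}) = X_a$, and the second table doubles $\vec{\gamma}$ to produce $\vec{\delta} = \vec{\alpha}_{((n-b)/2,\,-b)} = \varrho(\vec{\alpha}_{((n-b)/2,\,b)})$, which is a tie by Lemma \ref{diagonals}. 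Consistency then yields $X_a = f'(\vec{\delta}) = -T = T$.

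The main obstacle is verifying that every column weight in the second table remains a non-negative integer for $a$ in the enlarged range. The non-negativity constraints reduce to $(4a-n)/3 \leq b \leq a/3$, which are exactly the inequalities that forced $a \leq n/4$ in Lemma \ref{kleqn4} when $b$ was confined to $\{0,1\}$; with the enlarged range of admissible $b$, they now combine to precisely $a \leq n/3$. Integrality follows, as in Lemma \ref{kleqn4}, from the shared parity of $n$, $a$ and $b$. Extra care is needed in the borderline regime near $a = n/3$, where the permissible interval for $b$ may collapse to a single integer of the wrong parity; there, the now-richer supply of known tie vectors provided by Lemma \ref{diagonals} applied across all $b \leq n/4$ should give enough flexibility to construct an ad hoc tying election table to dispatch the exceptional cases.
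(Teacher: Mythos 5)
Your core idea -- rerunning the two-table construction from Lemma \ref{kleqn4} with a general auxiliary parameter $b$ rather than $b \in \{0,1\}$, so that the constraint $\frac{4a-n}{3} \leq b \leq \frac{a}{3}$ becomes satisfiable for $a$ up to $n/3$ -- is sound as far as it goes, and it is a genuinely different route from the paper. The paper instead keeps the tables parameter-free in this range: for $n/4 < a < n/3$ it builds two transitive election tables whose rows are copies of $\vec{\alpha}_{(a,a)}$ (and then of the resulting $\vec{\beta}_a$), establishing the recurrence $X_a = X_{4a-n}$, and then iterates $a \mapsto 4a-n$ downward until it lands in the range $a \leq n/4$ covered by Lemma \ref{kleqn4}; the endpoint $a = n/3$ is dispatched separately by an explicit three-row tying table for $(0,a,0,2a,0,0)$. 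The descent avoids ever needing an auxiliary integer of prescribed parity in a short interval.

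That is exactly where your argument has a genuine gap. You need an integer $b$ with $b \equiv n \pmod 2$ and $3b \in [4a-n,\, a]$, an interval of length $n-3a$. This is only guaranteed to contain an integer of prescribed parity when $n - 3a \geq 6$; since $n - 3a$ is even, the cases $n-3a \in \{0,2,4\}$ are all at risk, and they fail infinitely often. Concretely: for $n=14$, $a=4$ the interval forces $3b \in [2,4]$, i.e.\ $b=1$, which is odd while $n$ is even; for $n=15$, $a=5$ the interval is the single point $3b=5$, containing no integer at all; for $n=23$, $a=7$ the only candidate is $b=2$, of the wrong parity. These are not isolated degeneracies near $a=n/3$ that an unspecified ``ad hoc tying election table'' can be presumed to absorb -- they include the entire case $a = n/3$ whenever $9 \nmid n$, plus a positive proportion of the cases $n-3a \in \{2,4\}$. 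As written, the proof is incomplete for all of these; you would need either to supply the missing constructions explicitly (essentially re-deriving the paper's special table for $a=n/3$ and something new for $n-3a \in \{2,4\}$) or to switch to a mechanism, like the paper's $X_a = X_{4a-n}$ descent, that does not depend on finding such a $b$.
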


\begin{proof}
Firstly, if $a = n/3$ then $\vec{\alpha}_{(a,a)} = (0,a, 0, 2a, 0, 0)$, in which case the following is a tying election table:

\begin{equation}\label{eq:3an}\begin{aligned}
\left(\begin{array}{c c c}
(a) \\
\hline
2 & -1 & -1 \\
-1 & 2 & -1 \\
-1 & -1 & 2
\end{array}\right)
\end{aligned}\end{equation}

Each row has $\vec{\alpha}(w, M_i) = \vec{\alpha}_{(a,a)}$, so $X_a=T$.

On the other hand, if $n/4 < a < n/3$ then the following is an $X_a$-transitive election table:

\begin{equation}\begin{aligned}
\left(\begin{array}{c c | c c | c}
(\frac{n-3a}{2}) &  & (\frac{5a-n}{2}) & & (n-2a) \\
\hline
+2 & +1 & +2 & -1 & -1 \\
+1 & +2 & -1 & +2 & -1
\end{array}\right)
\end{aligned}\end{equation}

The multiplicity of each column is a non-negative integer since $n/4 \leq a <n/3$, and each row is a copy of $\vec{\alpha}_{(a,a)}$, so this is indeed an $X_a$-transitive election table. This gives that $\vec{\alpha}(w,M_1+M_2) = \vec{\beta}_a = (n-3a, 0, 5a-n, 0, n-2a, 0)$ has $f'(\vec{\beta}_a)=T$.

Now we construct a further $X_a$-transitive election table:

\begin{equation}\begin{aligned}
\left(\begin{array}{c c | c c | c}
(n-3a) & & (a) & & (4a-n) \\
\hline
+3 & -2 & +1 & -2 & +1 \\
-2 & +3 & -2 & +1 & +1
\end{array}\right)
\end{aligned}\end{equation}

Again, the multiplicities of each column are non-negative integers, and each row is a copy of $\vec{\beta}_a$, so this is an $X_a$-transitive table. This gives that $\vec{\alpha}(w,M_1+M_2) = \vec{\gamma}_a = (0, 4a-n, 2n-6a, 2a, 0, 0) = \vec{\alpha}_{(4a-n,4a-n)}$ has $f'(\vec{\gamma}_a) = X_a$; in other words, $X_a = X_{4a-n}$. Since $a < n/3$ we know that $4a-n < a$. Let $a_1 = 4a-n$ and $a_{i+1}=4a_i-n$ for as long as $a_i > n/4$; this is a decreasing sequence of integers so it must eventually terminate with $a_{i+1} \leq n/4$. Now $X_{a_{i+1}} = T$ by Lemma \ref{kleqn4}. But we have established that $X_{a_{i+1}}=X_{a_i}$ for all $i$, so $X_a = T$ as required. This completes the proof.
\end{proof}

We clean up some loose ends and restate Lemma \ref{kleqn3} in a more natural way:

\begin{cor}\label{b3}
If $\vec{\alpha} \in B'$ has $\alpha_3 = \alpha_{-3} = 0$ then $f'(\vec{\alpha})=T$.
\end{cor}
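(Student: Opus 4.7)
The plan is to identify each $\vec{\alpha} \in B'$ having $\alpha_{-3} = \alpha_3 = 0$ with one of the parameterised vectors $\vec{\alpha}_{(t,k)}$ already analysed in Lemmas \ref{diagonals} and \ref{kleqn3}. Concretely, given such $\vec{\alpha} = (0, \alpha_{-2}, \alpha_{-1}, \alpha_1, \alpha_2, 0)$, I set $k = \alpha_{-2} - \alpha_2$ and $t = \alpha_{-2}$. When $k \geq 0$ I will check that $\vec{\alpha} = \vec{\alpha}_{(t,k)}$ with $(t,k)$ in the admissible range of Lemma \ref{diagonals}; when $k < 0$ I will reduce to the previous case via $\varrho$.

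For the identification, the Borda-tie condition $d_1(\vec{\alpha}) = 0$ reads $\alpha_1 - \alpha_{-1} + 2(\alpha_2 - \alpha_{-2}) = 0$, which rearranges to $\alpha_1 - \alpha_{-1} = 2k$; combined with $\alpha_1 + \alpha_{-1} = n - 2t + k$ this forces $\alpha_{-1} = (n-k)/2 - t$ and $\alpha_1 = (n+3k)/2 - t$, matching the definition of $\vec{\alpha}_{(t,k)}$. The parity condition $2 \mid n - k$ required by Lemma \ref{diagonals} is automatic, since $\alpha_1 + \alpha_{-1}$ must share the parity of $\alpha_1 - \alpha_{-1} = 2k$; the bound $k \leq n/3$ follows from $n - 3k = 2(\alpha_{-1} + \alpha_2) \geq 0$; and $k \leq t \leq (n-k)/2$ reduce respectively to $\alpha_2 \geq 0$ and $\alpha_{-1} \geq 0$.

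Now Lemma \ref{diagonals} places $f'(\vec{\alpha})$ in $\set{X_k, T, -X_k}$, and Lemma \ref{kleqn3} gives $X_k = T$ whenever $k \leq n/3$ in the regimes it covers. I note in particular that the boundary case $k = n/3$ is handled in the proof of Lemma \ref{kleqn3} by the tying table (\ref{eq:3an}) with no restriction on $n$, which takes care of the only small-$n$ scenario (namely $n = 3$, where the only legal $k$ is $k = 1 = n/3$) not already covered by the ``$5 \leq n$ or $n$ even'' clause; the cases $n = 1, 2$ reduce trivially since $B' \cap \set{\alpha_{\pm 3} = 0}$ is either empty or consists only of vectors with $k = 0$ handled by Lemma \ref{twelvesteptyingloop}. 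Hence $f'(\vec{\alpha}) = T$ whenever $k \geq 0$. For $k < 0$, the vector $\varrho(\vec{\alpha})$ again lies in $B' \cap \set{\alpha_{\pm 3} = 0}$ and has $-k > 0$, so by the previous case $f'(\varrho(\vec{\alpha})) = T$; consistency then gives $f'(\vec{\alpha}) = -f'(\varrho(\vec{\alpha})) = -T = T$.

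The main obstacle is not computational but conceptual: recognising that the apparently restricted family $\set{\vec{\alpha}_{(t,k)}}$, together with its images under $\varrho$, already exhausts $B' \cap \set{\alpha_{\pm 3} = 0}$. Once that observation is made the remaining work is routine parity and non-negativity bookkeeping, plus a sanity check that the small-$n$ edge cases really are covered by the $a = n/3$ branch of Lemma \ref{kleqn3}.
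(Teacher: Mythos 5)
Your proposal is correct and is essentially the paper's own proof: both identify each such $\vec{\alpha}$ with $\vec{\alpha}_{(t,k)}$ or $\varrho(\vec{\alpha}_{(t,k)})$ via the two linear constraints $\sum_i \alpha_i = n$ and $d_1(\vec{\alpha})=0$, verify the parity and range conditions, invoke Lemmas \ref{diagonals} and \ref{kleqn3}, and dispose of $n=1$ ($B'$ empty with $\alpha_{\pm3}=0$) and $n=3$ (only the $a=n/3$ branch, covered by the tying table (\ref{eq:3an})) separately. The only cosmetic difference is the orientation of your parameterisation (your choice of $t,k$ lands on $\varrho(\vec{\alpha}_{(t,k)})$ rather than $\vec{\alpha}_{(t,k)}$ in the paper's component ordering), which is harmless since you reduce via $\varrho$ and $-T=T$.
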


\begin{proof}
For $n \geq 5$ or $n$ even it suffices to show that $\vec{\alpha} = \vec{\alpha}_{(p,q)}$ or $\varrho(\vec{\alpha}_{(p,q)})$ for some $p$ and $q$. For $\alpha_2 \geq \alpha_{-2}$, set $p = \alpha_2$ and $q = \alpha_2 - \alpha_{-2}$; then $\vec{\alpha}$ agrees with $\vec{\alpha}_{(p,q)}$ on $\alpha_2$ and $\alpha_{-2}$; $\alpha_1$ and $\alpha_{-1}$ are the determined uniquely from $\alpha_2$ and $\alpha_{-2}$ by $\sum_i \alpha_i = n$ and $\sum_i i\alpha_i = 0$, so the vectors are equal. Similarly for $\alpha_{-2} \geq \alpha_2$ set $p = \alpha_{-2}$ and $q = \alpha_{-2} - \alpha_2$; then $\vec{\alpha}$ agrees with $\varrho(\vec{\alpha}_{(p,q)}$ by the same argument.

It remains to show that $|\alpha_2 - \alpha_{-2}| \leq n/3$. But this is trivial as otherwise $\alpha_1+\alpha_{-1} < 2n/3$, so $|\alpha_1 - \alpha_{-1}| <2n/3$. Thus $2|\alpha_2 - \alpha_{-2}| < 2n/3$, contradicting our assumption that $|\alpha_2 - \alpha_{-2}| > n/3$.

Now if $n=3$ we must have $\alpha_2+\alpha_{-2}$ odd, but $\alpha_2+\alpha_{-2} \neq 3$ since $\sum_i i\alpha_i=0$. Thus either $\alpha_2 = 1$ and then $\alpha_{-1}=2$ or $\alpha_{-2}=1$ and $\alpha_1=2$; so we are in the case of $a=n/3$ or the reverse of such a vector; this is dealt with separately in Lemma \ref{kleqn3}, where we note that equation (\ref{eq:3an}) is a valid tying election table even for $a=1$.

Finally if $n=1$ then $B'$ is empty.
\end{proof}

This can now be extended to all of $B'$ in a few steps.

\begin{lem}\label{almostb4}
If $\vec{\alpha} \in B_n$ and $\alpha_3 + \alpha_{-3} \neq 1$ then $f'(\vec{\alpha})=T$.
\end{lem}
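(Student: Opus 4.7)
The plan is to set up a two-row $T$-transitive election table (in the sense of Lemma \ref{transitivethreeway}) whose combined aggregate $\vec{\alpha}(w, M_1 + M_2)$ equals $\vec{\alpha}$, with both individual aggregates $\vec{\alpha}(w, M_1)$ and $\vec{\alpha}(w, M_2)$ lying in $B'$ and having $\alpha_3 = \alpha_{-3} = 0$. Corollary \ref{b3} will then force $f'(\vec{\alpha}(w, M_i)) = T$ for $i = 1, 2$, and Lemma \ref{transitivethreeway} will yield $f'(\vec{\alpha}) = T$.

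To build the table, I would treat each voter's contribution to $\vec{\alpha}$ as a pairwise vote $d \in D_4$ assigned to the $\pi_{1,3}$ direction, and choose a decomposition $d = d^{(1)} + d^{(2)}$ with $d^{(1)}, d^{(2)} \in \{-2, -1, 1, 2\}$. Direct enumeration shows such a split exists for every $d \in D_4$: $(1,2)$ or $(2,1)$ for $d = 3$, the unique $(1,1)$ for $d = 2$, $(-1, 2)$ or $(2, -1)$ for $d = 1$, and the sign-reversed analogues for $d < 0$. Lemma \ref{constructingthreewayvotes} yields a concrete three-candidate ballot realising each split.

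Parameterise by $p_3, p_1, p_{-1}, p_{-3}$, counting how many voters of pairwise type $3, 1, -1, -3$ respectively are assigned to the first of the two available split options. The condition $d_1(\vec{\alpha}(w, M_1)) = 0$ simplifies to a single linear equation of the form
\begin{equation*}
p_3 + 3p_1 - 3p_{-1} - p_{-3} = R,
\end{equation*}
while the matching condition on $\vec{\alpha}(w, M_2)$ is automatic from $d_1(\vec{\alpha}) = 0$. Direct calculation using $d_1(\vec{\alpha}) = 0$ gives $R = -4\alpha_3 - 3\alpha_2 + 3\alpha_{-2} + 4\alpha_{-3}$, and adding $\pm\tfrac{3}{2} d_1(\vec{\alpha}) = 0$ to each of the differences $\alpha_3 + 3\alpha_1 - R$ and $R + \alpha_{-3} + 3\alpha_{-1}$ collapses both to the common non-negative expression $\tfrac{1}{2}\alpha_3 + \tfrac{3}{2}\alpha_1 + \tfrac{3}{2}\alpha_{-1} + \tfrac{1}{2}\alpha_{-3}$, so $R$ lies in the LHS range $[-\alpha_{-3} - 3\alpha_{-1}, \alpha_3 + 3\alpha_1]$.

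The main obstacle is showing the LHS actually attains every integer in this range, not just a sparse subset. The $3(p_1 - p_{-1})$ contribution only moves in multiples of three, so the LHS hits every integer in its range iff $p_3 - p_{-3}$ covers three consecutive integers, i.e., iff $\alpha_3 + \alpha_{-3} \geq 2$. This is exactly where the hypothesis $\alpha_3 + \alpha_{-3} \neq 1$ is the right one: the case $\alpha_3 + \alpha_{-3} = 0$ is already Corollary \ref{b3}, the case $\alpha_3 + \alpha_{-3} \geq 2$ is handled by the construction above, and the excluded value $\alpha_3 + \alpha_{-3} = 1$ genuinely defeats this approach (for example, $\alpha_3 = 1$, $\alpha_{-1} = 1$, $\alpha_{-2} = 1$ with $n = 3$ gives $R = -1$, which is missed because the LHS then covers only residues $\{0, 1\} \pmod 3$).
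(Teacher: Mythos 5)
Your construction is correct and is essentially the paper's own argument: the paper's table (\ref{eq:almostb4tet}), with its $\delta_{\pm2}$ parameters set to zero, is exactly your split of each pairwise vote $d$ into two components in $\{-2,-1,1,2\}$, your parameters $p_{\pm1},p_{\pm3}$ play the role of the paper's $\gamma_{\pm1},\gamma_{\pm3}$, and the paper resolves the same mod-$3$ obstruction using $\alpha_3+\alpha_{-3}\neq 1$ before invoking Corollary \ref{b3} on the two rows. (The only blemish is cosmetic: the multiple of $d_1(\vec{\alpha})$ you add should be $\tfrac12$ rather than $\tfrac32$, though since $d_1(\vec{\alpha})=0$ the collapsed expression and the conclusion are unaffected.)
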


\begin{proof}
We will construct a $T$-transitive election table as follows, with $\vec{\gamma}$ to be chosen later (this table is transposed due to its length):

\begin{equation}\label{eq:almostb4tet}
\begin{aligned}
\left(\begin{array}{c | c c }
(\gamma_3) & 2 & 1 \\
(\alpha_3 - \gamma_3) & 1 & 2 \\
(\alpha_2 - 2\delta_2) & 1 & 1 \\
(\delta_2) & 3 & -1 \\
(\delta_2) & -1 & 3 \\
(\gamma_1) & 2 & -1 \\
(\alpha_1 - \gamma_1) & -1 & 2 \\
(\gamma_{-1}) & 1 & -2 \\
(\alpha_{-1}-\gamma_{-1}) & -2 & 1 \\
(\alpha_{-2}-\delta_{-2}) & -1 & -1 \\
(\delta_{-2}) & -3 & 1 \\
(\delta_{-2}) & 1 & -3 \\
(\gamma_{-3}) -1 & -2 \\
(\alpha_{-3}-\gamma_{-3}) & -2 & -1
\end{array}\right)
\end{aligned}
\end{equation}

The only conditions on $\delta_2$ and $\delta_{-2}$ are that $2\delta_i \leq \alpha_i$, and this is always possible at the very least by setting $\delta_i=0$. We require $\gamma_i \leq \alpha_i$ for all $i$. Now

\begin{equation}\begin{aligned}
d_1(\vec{\alpha}(w, M_1)) &= (\alpha_3 + \alpha_2 - \alpha_1 -2\alpha_{-1} - \alpha_{-2} - 2\alpha_{-3}) \\
&+ (\gamma_3 + 3\gamma_1 + 3\gamma_{-1} + \gamma_{-3}) \\
&= (-\alpha_3 - 3\alpha_1 - 3\alpha_{-1} - \alpha_{-3})/2 \\
&+ (\gamma_3 + 3\gamma_1 + 3\gamma_{-1} + \gamma_{-3})
\end{aligned}\end{equation}

The second equality is due to $\sum_i \alpha_i = 0$. We need to pick $\gamma_i$ such that this value is zero; in other words, so that $\alpha_3 + 3 \alpha_1 + 3 \alpha_{-1} + \alpha_{-3} = 2\gamma_3 + 6\gamma_1 + 6\gamma_{-1} + 2\gamma_{-3}$. We do so by setting $\gamma_1=\gamma_{-1}=0$ and $\gamma_3 + \gamma_{-3} \in \set{0,1,2}$ such that $2(\gamma_3 + \gamma_{-3}) \equiv \alpha_3 + \alpha_{-3} \pmod 3$. Note that this is always possible as $\gamma_3 +\gamma_{-3}$ can take any value at most $\alpha_3+\alpha_{-3}$, so it can take any value modulo 3 unless $\alpha_3 + \alpha_{-3} <2$; but we have disallowed $\alpha_3 + \alpha_{-3} =1$, and if $\alpha_3+\alpha_{-3}=0$ then $\gamma_3 =\gamma_{-3}=0$ works.

Now $\alpha_3 + 3\alpha_1 + 3\alpha_{-1} + \alpha_{-3}$ is an even positive integer, so $\alpha_3 + 3\alpha_1 + 3\alpha_{-1} + \alpha_{-3} - 2(\gamma_3+3\gamma_{1} + 3\gamma_{-1} + \gamma_{-3})$ is a non-negative integer and a multiple of three. Decrease it in increments of three by adding one to either $\gamma_1$ or $\gamma_{-1}$, or adding three to $\gamma_3+\gamma_{-3}$. This is possible until $\gamma_1 = \alpha_1$, $\gamma_{-1} = \alpha_{-1}$ and $2(\gamma_{3}+\gamma_{-3}) \geq \alpha_3+\alpha_{-3}-4$; at this point, the value is a multiple of six and at most four, so it is a non-positive integer. Thus at some point in the process its value is zero as required.

Now $\vec{\alpha}(w,M_1)$ and $\vec{\alpha}(w,M_2)$ both have $\alpha_3 = \alpha_{-3} = 0$, so by Lemma \ref{b3} this is a $T$-transitive election table. Then $\vec{\alpha}(w,M_1+M_2)= \vec{\alpha}$ has $f'(\vec{\alpha})=T$ as required.
\end{proof}

To deal with the case of $\alpha_3 = 1$ and $\alpha_{-3}=0$ (which is identical to the case of $\alpha_3 = 0$ and $\alpha_{-3}=1$) we require additional work, in which we compose transitive election tables by concatenating smaller ones.

\begin{lem}\label{addingthrees}
For $\vec{\alpha} \in B'$ for $n \geq 3$ with $\alpha_3 = \alpha_{-3} = 0$, there exists a transitive election table $(w,M)$ with $m=2$, $\vec{\alpha}(w,M_1+M_2) =\vec{\alpha}$, and $\vec{\alpha}(w,M_i) = \vec{\beta}^{(i)}$ satisfying $\beta_3^{(i)}+\beta_{-3}^{(i)} \geq 1$.
\end{lem}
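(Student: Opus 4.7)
The plan is to decompose each voter's pairwise vote into two valid $D_4$-entries whose sum recovers the original, choosing the splits so that one designated voter contributes a $+3$ to row~$1$ and another contributes a $-3$ to row~$2$. The resulting column-by-column picture yields the desired transitive election table with $m=2$.

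The key preliminary observation is that, under the hypotheses, $\vec\alpha$ must contain at least one voter with vote in $\{1,2\}$ and at least one voter with vote in $\{-1,-2\}$. Indeed $\alpha_3=\alpha_{-3}=0$ together with $d_1(\vec\alpha)=0$ forces $2\alpha_2+\alpha_1 = \alpha_{-1}+2\alpha_{-2}$; if either side vanished then all $\alpha_i$ would be $0$, contradicting $\sum_i \alpha_i = n \geq 3$. So I would fix one positive voter with vote $d^+ \in \{1,2\}$ and one negative voter with vote $d^- \in \{-1,-2\}$.

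Next I would build $(w,M)$ with $t=n$ columns each of weight $1$, assigning one column per voter. To the positive voter's column assign $(M_1,M_2)=(3,\,d^+-3)$; since $d^+-3\in\{-2,-1\}$ both entries lie in $D_4$. To the negative voter's column assign $(M_1,M_2)=(d^-+3,\,-3)$; since $d^-+3\in\{1,2\}$ both entries lie in $D_4$. Every remaining voter with vote $d\in\{-2,-1,1,2\}$ receives any decomposition into two $D_4$-entries, for instance
\[
1 = (-1) + 2,\qquad -1 = 1 + (-2),\qquad 2 = 1+1,\qquad -2 = (-1)+(-1).
\]
With $P=(x_1 x_2)$ forced by $m=2$, the only combinatorial conditions to check are that $M_1,M_2,M_1+M_2 \in D_4^n$, and these hold by inspection: each column sum of $M_1+M_2$ is the original voter vote, so $\vec\alpha(w,M_1+M_2)=\vec\alpha$.

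Finally I would read off the conclusion: the positive voter's column contributes $+3$ to row~$1$, so $\beta^{(1)}_3\geq 1$, and the negative voter's column contributes $-3$ to row~$2$, so $\beta^{(2)}_{-3}\geq 1$, giving $\beta^{(i)}_3+\beta^{(i)}_{-3}\geq 1$ in both cases. The only step that is not pure bookkeeping of allowable splittings is guaranteeing the existence of the two distinguished voters, and that is precisely where the hypotheses $\alpha_3=\alpha_{-3}=0$, $d_1(\vec\alpha)=0$, and $n\geq 3$ are used.
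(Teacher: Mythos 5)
There is a genuine gap: the two rows you construct are not Borda ties, and they must be. Your splits send a positive entry to row $1$ from every voter except those voting $+1$ or $-2$, so row $1$ comes out heavily positive and row $2$ heavily negative. Concretely, for $\vec{\alpha}=(0,1,0,2,0,0)$ (one voter at $+2$, two at $-1$, $n=3$) your columns are $(3,-1)$, $(2,-3)$, $(1,-2)$, giving $d_1(\vec{\beta}^{(1)})=6$ and $d_1(\vec{\beta}^{(2)})=-6$; in general one computes $d_1(\vec{\beta}^{(1)})=c+3(\alpha_2-\alpha_{-2})$ with $c\in\set{3,4,5,6}$ depending on the choice of $d^{\pm}$, which is generically nonzero. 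This defeats the lemma in two ways. First, the definition of a transitive election table requires $f'(\vec{\alpha}(w,M_i))\in\set{X,T}$ for both rows with at least one equal to $X$; your two rows have opposite-sign Borda scores, so (for any non-tie rule) they evaluate to $W$ and $L$, and $\set{W,L}$ is contained in no $\set{X,T}$ --- indeed $\set{W,L,Y}$ is a consistent multiset for every $Y$, so the table carries no information about $f'(\vec{\alpha})$. Second, and decisively for how the lemma is consumed, Lemma \ref{alphaminus1alphaminus2} appends zero-row-sum columns carrying a second $\pm 3$ to each row and then invokes Lemma \ref{almostb4} on the augmented rows; that lemma applies only to vectors in $B'$, so the $\vec{\beta}^{(i)}$ produced here must already satisfy $d_1(\vec{\beta}^{(i)})=0$.

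The missing ingredient is exactly the row-balancing carried out in Lemma \ref{almostb4}: the paper reuses table (\ref{eq:almostb4tet}), whose $\gamma$-parameters are chosen so that each row individually has $d_1=0$, and injects the required $\pm 3$ entries by setting $\delta_{\pm 2}=1$ whenever some $\alpha_{\pm 2}\geq 2$ (the pair of columns $(3,-1)$, $(-1,3)$ contributes the same amount to each row as two copies of $(1,1)$, so the balance is undisturbed). The residual cases $\alpha_2,\alpha_{-2}\leq 1$ are then handled by peeling off small zero-score sub-vectors and appending bespoke zero-row-sum column blocks. Your preliminary observation that the distinguished voters exist is fine, but it is not where the real work lies; any repair of your argument has to choose the per-voter splits so that both row sums vanish, which is a nontrivial parity/balancing constraint rather than bookkeeping.
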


\begin{proof}
We reuse the transitive election table (\ref{eq:almostb4tet}), which allows us to construct such $\vec{\beta}^{(i)}$ with $\beta_3^{(i)}=\beta_{-3}^{(i)}=0$. Now if $\alpha_i$ for $i = \pm 2$ is two or more, we can set $\delta_i = 1 \leq \alpha_i/2$, in which case we meet the criteria above.

Suppose instead that $\alpha_2, \alpha_{-2} \leq 1$. Then there are three cases to consider without loss of generality.

If $\alpha_2 = 1$ and $\alpha_{-2}=0$ then we must have $\alpha_{-1} \geq 2$. Then we consider $\vec{\alpha}' = \vec{\alpha} - (0,1,0,2,0,0)$, construct a transitive election table using (\ref{eq:almostb4tet}), and append entries $(1,-3,2)$, $(1,2,-3)$ and $(1,1,1)$ to $M_1$, $M_2$ and $w$ respectively, to give the required vectors. A symmetrical argument applies if $\alpha_2 = 0$ and $\alpha_{-2}=1$.

On the other hand, if $\alpha_2 = \alpha_{-2}=1$, then since $n\geq 3$ we must also have $\alpha_1 = \alpha_{-1} \geq 1$; then we consider $\vec{\alpha}' = \vec{\alpha} - (0,1,1,1,1,0)$, construct a transitive election table using (\ref{eq:almostb4tet}), and append entries $(3,2,-2, -3)$, $(-1,-3,3,1)$ and $(1,1,1,1)$ to $M_1$, $M_2$ and $w$ respectively to give the required vectors.

Finally, if $\alpha_2 = \alpha_{-2}=0$, then since $n \geq 3$ we must also have $\alpha_1 = \alpha_{-1} \geq 2$; then we consider $\vec{\alpha}' = \vec{\alpha} - (0,0,2,2,0,0)$, construct a transitive election table using (\ref{eq:almostb4tet}), and add entries $(3,-3,2, -2)$, $(-2,2,-3,3)$ and $(1,1,1,1)$ to $M_1$, $M_2$ and $w$ respectively to give the required vectors.

This completes the proof.
\end{proof}

We can now use the constructions given by Lemmas \ref{almostb4} and \ref{addingthrees} to decompose vectors $\vec{\alpha} \in B'$ that were not covered by Corollary \ref{almostb4} into ones that were. There are a few separate cases of this.

\begin{lem}\label{alphaminus1alphaminus2}
If for $n\geq 6$, $\vec{\alpha} \in B'$ has:
\begin{itemize}
\item $\alpha_3 = 1$;
\item $\alpha_{-3}=0$; and
\item $\alpha_{-1}, \alpha_{-2}\geq 1$
\end{itemize}
then $f'(\vec{\alpha})=T$.
\end{lem}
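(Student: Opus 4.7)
The plan is to construct a two-row transitive election table $(w, M)$ with $\vec{\alpha}(w, M_1 + M_2) = \vec{\alpha}$ such that both $\vec{\alpha}(w, M_1)$ and $\vec{\alpha}(w, M_2)$ lie in $B'$ and satisfy $\alpha_3 + \alpha_{-3} \neq 1$, so that each is governed by Lemma \ref{almostb4} and evaluates to $T$ under $f'$. Lemma \ref{transitivethreeway} then gives $f'(\vec{\alpha}) = T$. The difficulty is that $\vec{\alpha}$ itself sits in the forbidden regime $\alpha_3 + \alpha_{-3} = 1$, so each split must carefully relocate $\pm 3$'s between the halves so that each half lands at $\alpha_3 + \alpha_{-3}$ equal to $0$ or at least $2$.

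The unique $\alpha_3 = 1$ voter admits only the splits $(2,1)$ and $(1,2)$, neither of which contributes to $\alpha_{\pm 3}$ in either half, so all $\pm 3$ entries in the halves must come from splits of $\alpha_{\pm 1}, \alpha_{\pm 2}$ voters. Using $\alpha_{-1} \geq 1$, we split one $\alpha_{-1}$ voter as $(-3, 2)$, contributing $1$ to $\alpha_{-3}^{(1)}$; using $\alpha_{-2} \geq 1$, we split one $\alpha_{-2}$ voter as $(1, -3)$, contributing $1$ to $\alpha_{-3}^{(2)}$. To push each half beyond the $\alpha_3 + \alpha_{-3} = 1$ threshold we need one further $\pm 3$ injected into each half. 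The hypothesis $n \geq 6$ combined with the Borda identity $3 + 2\alpha_2 + \alpha_1 = \alpha_{-1} + 2\alpha_{-2}$ forces $3\alpha_2 + 2\alpha_1 \geq 2 + \alpha_{-2}$, giving enough $\pm 3$-capable voters to do this: in the favourable sub-cases we can split a second $\alpha_1$ voter as $(-2, 3)$ or a second $\alpha_{-1}$ voter as $(2, -3)$ to inject a $+3$ into $M_2$ and an $\alpha_2$ voter as $(3,-1)$ to inject a $+3$ into $M_1$, achieving $\alpha_3^{(i)} + \alpha_{-3}^{(i)} = 2$ in both halves. In the tight corner sub-case $\alpha_1 = 0, \alpha_2 = 1, \alpha_{-2} = 1$ (where the Borda identity and $n \geq 6$ actually force $\alpha_{-1} \geq 3$), only one $+3$ injector is available, and we instead arrange one half to have $\alpha_3^{(i)} = \alpha_{-3}^{(i)} = 0$ (handled by Corollary \ref{b3}) and the other to absorb all the $\pm 3$'s.

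The remaining voters are split parametrically using Borda-opposite pairs: residual $\alpha_1$ voters are divided between $(-1, 2)$ and $(2, -1)$, and residual $\alpha_{-1}$ voters between $(1, -2)$ and $(-2, 1)$, so that each swap between the two orientations shifts $d_1(\vec{\alpha}(w, M_1))$ by $\pm 3$. Requiring $d_1(\vec{\alpha}(w, M_1)) = 0$ (which by the total Borda balance automatically yields $d_1(\vec{\alpha}(w, M_2)) = 0$) reduces to a single linear Diophantine equation of the form $p + q = \alpha_1 + \alpha_2 - \alpha_{-2} + c$, where $c$ is a small constant depending on the special splits. The main obstacle is to match the residue modulo $3$ of this equation and verify that an admissible non-negative integer solution lies in the allowed range; the residue is adjusted by the choice between $(2,1)$ and $(1,2)$ for the $\alpha_3$ split and among the orientations of the injectors, and the range bounds follow from the inequality $3\alpha_2 + 2\alpha_1 \geq 2 + \alpha_{-2}$ derived above. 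Case-by-case verification of these admissibility conditions, combined with invocation of Lemma \ref{almostb4} (or Corollary \ref{b3} in the degenerate sub-case) on each half, completes the proof.
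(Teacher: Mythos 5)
Your high-level strategy is the right one and matches the paper's in spirit: write $\vec{\alpha}$ as $\vec{\alpha}(w,M_1+M_2)$ for a two-row tying table in which each half lies in $B'$ and escapes the forbidden regime $\alpha_3+\alpha_{-3}=1$, then invoke Lemma \ref{almostb4} (or Corollary \ref{b3}) on each half. The paper gets this almost for free: it strips off three voters --- the $+3$, one $-1$ and one $-2$, which sum to zero, so $\vec{\alpha}-(1,0,0,1,1,0)\in B'_{n-3}$ with no $\pm3$ entries --- applies Lemma \ref{addingthrees} to split the remainder into two tying halves each already carrying a $\pm3$, and then appends the three fixed columns $(-3,2,1)$ and $(2,1,-3)$ with unit weights, which restores the removed voters and plants a second $\pm3$ in each half. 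Your proposal instead rebuilds the entire splitting by hand, which forces you to redo the work that Lemma \ref{addingthrees} already encapsulates.

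That is where the gap lies. The decisive step --- that the balancing condition $d_1(\vec{\alpha}(w,M_1))=0$ admits a non-negative integer solution of the right residue in every sub-case --- is asserted (``case-by-case verification \dots completes the proof'') rather than carried out, and the concrete recipes you do give do not cover all cases. For instance, your $M_1$-injector is ``an $\alpha_2$ voter as $(3,-1)$'', but $\alpha_2=0$ is entirely consistent with your derived inequality $3\alpha_2+2\alpha_1\ge 2+\alpha_{-2}$ (it then forces $\alpha_1\ge 2$), and in that case no such voter exists; you would need a different injector (say an $\alpha_1$ voter as $(3,-2)$ or an $\alpha_{-2}$ voter as $(-3,1)$), which changes the constant $c$ in your Diophantine equation and hence the whole residue and range analysis. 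Since the entire difficulty of the lemma is precisely this balancing argument, deferring it leaves the proof incomplete. Either finish the enumeration honestly or, better, take the paper's route: the reduction to $\vec{\alpha}'\in B'_{n-3}$ plus Lemma \ref{addingthrees} is exactly the device that makes this bookkeeping unnecessary, and is why the hypothesis $n\ge 6$ appears --- it guarantees $n-3\ge 3$ so that Lemma \ref{addingthrees} applies.
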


\begin{proof}
We consider $\vec{\alpha}' = \vec{\alpha} - (1,0,0,1,1,0)$. $\vec{\alpha}' \in B'_{n-3}$ and $n-3 \geq 3$, so we apply Lemma \ref{addingthrees} and get a transitive election table with rows $M_1$ and $M_2$. We add entries $(-3,2,1)$, $(2,1,-3)$ and $(1,1,1)$ to $M_1$, $M_2$ and $w$ respectively to give a new transitive election table with rows $M'_1$ and $M'_2$, with $\vec{\alpha}(w,M'_1+M'_2) = \vec{\alpha}' + (1,0,0,1,1,0) =\vec{\alpha}$. But each $\vec{\alpha}(w,M'_i)$ has at least two $\pm 3$ entries; one from the construction in Lemma \ref{addingthrees} and one that we just added to go from $M_i$ to $M'_i$. So by Corollary \ref{almostb4} we know that both $\vec{\alpha}(w,M'_i)$ are tying vectors under $f'$, so they are rows of a $T$-transitive election table, which completes the proof.
\end{proof}

\begin{lem}\label{alphaminus1}
If $\vec{\alpha} \in B'$ has:
\begin{itemize}
\item $\alpha_3 = 1$;
\item $\alpha_{-3}=0$; and
\item $\alpha_{-2}=0$
\end{itemize}
then $f'(\vec{\alpha})=T$.
\end{lem}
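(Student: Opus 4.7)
The plan is to build a two-row transitive election table whose first row is a tying vector (via Lemma \ref{almostb4}) and whose second row equals $\varrho(\vec{\alpha})$. Since twoway consistency gives $f'(\varrho(\vec{\alpha})) = -f'(\vec{\alpha})$, combining the rows via Corollary \ref{transitiveelectiontable} will force the equation $f'(\vec{\alpha}) = -f'(\vec{\alpha})$, which only holds when $f'(\vec{\alpha}) = T$. First unpack the hypotheses: from $d_1(\vec{\alpha}) = 0$ together with $\alpha_3 = 1$ and $\alpha_{-2} = \alpha_{-3} = 0$, we obtain $\alpha_{-1} = 3 + 2\alpha_2 + \alpha_1 \geq 3$, which leaves enough room in the $-1$ bucket for the decomposition to go through.

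Pick any integer $x$ with $\max(0, \alpha_2 - \alpha_1) \leq x \leq \alpha_2$ (such an $x$ always exists), and take the transitive election table whose eight voter-type columns correspond to the pairs $(\pi_{1,2}, \pi_{2,3}) \in \set{(2,1), (-2,1), (1,1), (3,-1), (2,-1), (1,-2), (3,-2), (2,-3)}$, with respective multiplicities $1$, $2 + 2\alpha_2 + \alpha_1 - x$, $x$, $\alpha_2 - x$, $\alpha_1 - \alpha_2 + x$, $x$, $\alpha_2 - x$, $1$. The choice of $x$ makes all multiplicities non-negative integers, and a bookkeeping check shows that the $\pi_{1,3}$-distribution of these voters equals $\vec{\alpha}$ (so $\vec{\alpha}(w, M_1 + M_2) = \vec{\alpha}$) while the $\pi_{2,3}$-distribution equals $\varrho(\vec{\alpha})$.

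The first-row distribution $\vec{\alpha}(w, M_1)$ satisfies $d_1(\vec{\alpha}(w, M_1)) = 0$, and its $\pm 3$ entries come out to $(2(\alpha_2 - x), 0)$, so $\beta_3 + \beta_{-3}$ is even and therefore cannot equal $1$; Lemma \ref{almostb4} then gives $f'(\vec{\alpha}(w, M_1)) = T$. By twoway consistency, $f'(\vec{\alpha}(w, M_2)) = f'(\varrho(\vec{\alpha})) = -f'(\vec{\alpha})$. Setting $X = -f'(\vec{\alpha})$, both rows lie in $\set{X, T}$, so $(w, M)$ is an $X$-transitive election table; Corollary \ref{transitiveelectiontable} yields $f'(\vec{\alpha}) = f'(\vec{\alpha}(w, M_1 + M_2)) = X = -f'(\vec{\alpha})$, forcing $f'(\vec{\alpha}) = T$.

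The main obstacle is arranging the first-row distribution to escape the excluded case $\beta_3 + \beta_{-3} = 1$ of Lemma \ref{almostb4}. The trick is to use the two voter types $(3,-1)$ and $(3,-2)$ with matched multiplicities $\alpha_2 - x$, which automatically makes $\beta_3$ even, while $\beta_{-3}$ vanishes because no voter type in the table has $\pi_{1,2} = -3$. Together with the constraint that exactly one voter must have $\pi_{1,3} = 3$ (to realise $\alpha_3 = 1$) and with the required second-row distribution $\varrho(\vec{\alpha})$, most of the column weights are then dictated, with $x$ serving as a free parameter that absorbs the remaining non-negativity constraints.
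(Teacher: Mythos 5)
Your construction is correct, and it takes a genuinely different route from the paper. The paper proves this lemma by a case split: for $\alpha_2=0$ it writes down an explicit four-row table, and for $\alpha_2\geq 1$ it peels off the template $(1,1,0,5,0,0)$, applies the decomposition of Lemma \ref{almostb4} to the remainder, and appends seven extra columns so that \emph{both} rows acquire at least two $\pm 3$ entries and are therefore tying by Lemma \ref{almostb4}; the table is then a $T$-transitive table outright. You instead need only \emph{one} row to be tying: you decompose $\vec{\alpha}$ as (tying vector) $+$ $\varrho(\vec{\alpha})$ and close the argument with the antisymmetry $f'(\varrho(\vec{\alpha}))=-f'(\vec{\alpha})$, so that Corollary \ref{transitiveelectiontable} forces $f'(\vec{\alpha})=-f'(\vec{\alpha})$ and hence $f'(\vec{\alpha})=T$. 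I checked your bookkeeping: the eight column weights sum to $n=4+3\alpha_2+2\alpha_1$, are non-negative integers for any $x$ with $\max(0,\alpha_2-\alpha_1)\leq x\leq\alpha_2$, the $\pi_{1,3}$- and $\pi_{2,3}$-marginals are $\vec{\alpha}$ and $\varrho(\vec{\alpha})$ respectively, and the $\pi_{1,2}$-marginal $(2(\alpha_2-x),\,2+\alpha_1-\alpha_2+x,\,2x,\,0,\,2+2\alpha_2+\alpha_1-x,\,0)$ has Borda score zero and an even number of $\pm 3$ entries, so Lemma \ref{almostb4} applies to it. Your approach buys a uniform treatment (no case split on $\alpha_2$) and a single self-contained table rather than a surgery on the table of Lemma \ref{almostb4}; the paper's approach has the virtue of reusing the same append-columns pattern across the neighbouring Lemmas \ref{alphaminus1alphaminus2} and \ref{alphaminus2}, whereas the $\varrho$-antisymmetry trick you use appears elsewhere in the paper (e.g.\ in Lemmas \ref{diagonals} and \ref{kleqn4}) but not here.
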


\begin{proof}
Note that $\vec{\alpha} = (1, \alpha_2, \alpha_1, 3+2\alpha_2+\alpha_1,0,0)$. If $\alpha_2 = 0$ then the following is an $f'(\vec{\alpha})$-transitive election table:

\begin{equation}\begin{aligned}
M &= \left(\begin{array}{c c c c | c c}
(1) & & & & (\alpha_1) \\
\hline
+3 & -1 & -1 & -1 & +1 & -1 \\
-1 & +3 & -1 & -1 & +1 & -1 \\
-1 & -1 & +3 & -1 & -1 & +1 \\
-1 & -1 & -1 & +3 & -1 & +1
\end{array}\right) \\ \\
P &= ((x_1x_2)(x_3x_4))
\end{aligned}\end{equation}

Thus $X=T$ as required. Otherwise $\alpha_2 \geq 1$ and we note that $\alpha_{-1} \geq 5$. Then we consider $\vec{\alpha}' = \vec{\alpha} - (1,1,0,5,0,0)$. We can construct a transitive election table $(w,M)$ as in Lemma \ref{almostb4}. We add entries $(2,-1,-3,-3,2,2,1)$, $(1,3,2,2,-3,-3,-2)$ and $(1,1,1,1,1,1,1)$ to $M_1$, $M_2$ and $w$ respectively to give $M'_1$ and $M'_2$ with $\vec{\alpha}(w, M'_1+M'_2) = \vec{\alpha}' + (1,1,0,5,0,0) =\vec{\alpha}$. But each $M'_i$ has at least two $\pm 3$ entries, both added among the additional entries used to go from $M_i$ to $M'_i$. So by Corollary \ref{almostb4} we know that both $\vec{\alpha}(w,M_i)$ are tying vectors under $f'$, so they are rows of a $T$-transitive election table, which completes the proof.
\end{proof}

\begin{lem}\label{alphaminus2}
If $\vec{\alpha} \in B'$ has:
\begin{itemize}
\item $\alpha_3 = 1$;
\item $\alpha_{-3}=0$; and
\item $\alpha_{-1}=0$
\end{itemize}
then $f'(\vec{\alpha})=T$.
\end{lem}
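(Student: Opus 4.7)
The vector takes the form $\vec{\alpha} = (1, \alpha_2, \alpha_1, 0, \alpha_{-2}, 0)$, and $d_1(\vec{\alpha}) = 0$ forces $\alpha_1 = 2\alpha_{-2} - 2\alpha_2 - 3$, so $\alpha_1$ is a positive odd integer and $\alpha_{-2} \geq \alpha_2 + 2 \geq 2$. Following the template of Lemmas \ref{alphaminus1alphaminus2} and \ref{alphaminus1}, my plan is to split $\vec{\alpha} = \vec{\alpha}' + \vec{v}$, where $\vec{v}$ is a small vector in $B'$ absorbing the $\alpha_3 = 1$ entry and $\vec{\alpha}'$ lies in $B'$ with $\alpha_3' = \alpha_{-3}' = 0$. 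I apply Lemma \ref{addingthrees} to $\vec{\alpha}'$ to obtain a transitive election table with rows $M_1, M_2$ whose intermediate vectors $\vec{\beta}^{(i)}$ satisfy $\beta_3^{(i)} + \beta_{-3}^{(i)} \geq 1$ and $d_1 = 0$; then I append columns for $\vec{v}$ chosen so that each appended contribution has $d_1 = 0$ and at least one $\pm 3$ entry. The combined $\vec{\beta}_i$ then satisfies $d_1 = 0$ and $\alpha_3 + \alpha_{-3} \geq 2$, so Lemma \ref{almostb4} yields $f'(\vec{\beta}_i) = T$ and the enlarged table is $T$-transitive.

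For the main regime, where $\vec{\alpha}'$ has at least three voters, I split on $\alpha_2$. When $\alpha_2 \geq 1$ (which forces $\alpha_{-2} \geq 3$) I take $\vec{v} = (1, 1, 1, 0, 3, 0)$ and append four columns with weights $(1, 1, 1, 3)$ using the split $M_1^{\mathrm{add}} = (2, 3, -2, -1)$, $M_2^{\mathrm{add}} = (1, -1, 3, -1)$. When $\alpha_2 = 0$ with $\alpha_{-2} \geq 3$ (so $\alpha_1 \geq 3$) I take $\vec{v} = (1, 0, 3, 0, 3, 0)$ and append seven unit-weight columns via $M_1^{\mathrm{add}} = (2, 2, -2, 3, -3, -1, -1)$, $M_2^{\mathrm{add}} = (1, -1, 3, -2, 1, -1, -1)$. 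In each case a direct check confirms $d_1(M_i^{\mathrm{add}}) = 0$, all entries lie in $D_4$, and each row carries a $\pm 3$ entry, so the combined vectors land in Lemma \ref{almostb4}.

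The main obstacle is the handful of small cases where $\vec{\alpha}'$ is too small for Lemma \ref{addingthrees}. The parity constraint on $\alpha_1$ excludes $n = 5$, and the residual problematic inputs are the single vectors $(1, 0, 1, 0, 2, 0)$ at $n = 4$, $(1, 1, 1, 0, 3, 0)$ at $n = 6$, $(1, 0, 3, 0, 3, 0)$ at $n = 7$, and $(1, 2, 1, 0, 4, 0)$ at $n = 8$. For each I construct an explicit $T$-transitive election table on $n$ unit-weight columns, checking that the two rows $\vec{\beta}_i$ land in a previously established tying case: Corollary \ref{b3} when $\alpha_3 = \alpha_{-3} = 0$, Lemma \ref{almostb4} when $\alpha_3 + \alpha_{-3} \geq 2$, or --- for the remaining row of the $n = 7$ construction --- Lemma \ref{alphaminus1alphaminus2}, whose $n \geq 6$ hypothesis is satisfied. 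A short combinatorial search produces suitable splits in each case; this bookkeeping is the bulk of the effort but involves no new ideas.
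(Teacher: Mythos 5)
Your proposal is correct in outline and reaches the result, but it takes a noticeably longer road than the paper. The paper's proof subtracts the four-voter vector $(1,0,1,0,2,0)$ (using exactly the facts you derived: $\alpha_1\geq 1$ odd and $\alpha_{-2}\geq 2$), applies the table of Lemma \ref{almostb4} directly to $\vec{\alpha}'$ (legitimate since $\alpha'_3+\alpha'_{-3}=0\neq 1$, even when $\vec{\alpha}'$ is the zero vector), and appends the four unit-weight columns $(1,3,-3,-1)$ and $(2,-2,1,-1)$. The point you missed is that the two appended rows need not \emph{both} acquire a $\pm 3$: the paper arranges for $M'_1$ to get two $\pm 3$ entries and $M'_2$ to get none, so both combined rows fall under Lemma \ref{almostb4}, with $\alpha_3+\alpha_{-3}$ equal to $2$ and to $0$ respectively. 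Because you instead insist that each appended row carry a $\pm 3$, you are forced to route the base vector through Lemma \ref{addingthrees}, which needs at least three voters; this is what generates your case split on $\alpha_2$, the larger subtracted vectors, and the four residual small cases. Those small cases are correctly enumerated and your targets for them are attainable (for instance, at $n=8$ the split with rows $(2,1,-1,-2,1,1,-1,-1)$ and $(1,1,3,3,-3,-3,-1,-1)$ lands in Corollary \ref{b3} and Lemma \ref{almostb4} as you claim), but as written you assert rather than exhibit these tables, so the bookkeeping you defer to ``a short combinatorial search'' would need to be written out. The paper's single uniform construction avoids all of this.
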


\begin{proof}
Note that $2\alpha_{-2} = 3 + 2\alpha_2+\alpha_1 \geq 3$, so $\alpha_{-2}\geq 2$. On the other hand, both sides of this equation are even so we find that $\alpha_1 \geq 1$. Thus we can consider $\vec{\alpha}' = \vec{\alpha}-(1,0,1,0,2,0)$. We construct a transitive election table $(w,M)$ as in Lemma \ref{almostb4}. We add entries $(1,3,-3,-1)$, $(2,-2,1,-1)$ and $(1,1,1,1)$ to $M_1$, $M_2$ and $w$ respectively to give $M'_1$ and $M'_2$ such that $\vec{\alpha}(w,M'_1+M'_2) = \vec{\alpha}' + (1,0,1,0,2,0) =\vec{\alpha}$. Now $M'_1$ has two $\pm 3$-entries and $M'_2$ has none, so by Lemma \ref{almostb4} we know that both $\vec{\alpha}(w,M_i)$ are tying vectors under $f'$, so they are rows of a $T$-transitive election table, which completes the proof.
\end{proof}

Only one small case remains:

\begin{lem}
If $n \leq 5$ and $\vec{\alpha} \in B_n$ has $\alpha_3 = 1$, $\alpha_{-3}=0$ and $\alpha_{-1}, \alpha_{-2} \geq 1$, then $f'(\vec{\alpha})=T$.
\end{lem}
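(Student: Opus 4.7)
The plan is to enumerate the finite set of vectors satisfying the hypotheses, then dispatch each one with an explicit small transitive election table. Applying $\sum_i \alpha_i = n$ and $d_1(\vec{\alpha}) = 3 + 2\alpha_2 + \alpha_1 - \alpha_{-1} - 2\alpha_{-2} = 0$, together with $\alpha_{-1}, \alpha_{-2} \geq 1$ and $n \leq 5$, a short case analysis on the integer solutions should leave exactly three vectors: $(0,1,1,0,0,1)$ at $n=3$, and $(0,1,2,1,0,1)$ and $(0,2,1,0,1,1)$ at $n=5$. The case $n=4$ should drop out because the remaining mass $\alpha_1+\alpha_2+\alpha_{-1}+\alpha_{-2}=3$ cannot be reconciled with $\alpha_{-1}+2\alpha_{-2} = 3+2\alpha_2+\alpha_1$ while keeping $\alpha_{-1},\alpha_{-2}\geq 1$.

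For the $n=3$ vector the natural move is a cyclic tying election table with $m=3$ rows that are cyclic shifts of the voter-wise relative votes $(3,-2,-1)$:
\[
\left(\begin{array}{ccc}
(1) & & \\ \hline
3 & -2 & -1 \\
-1 & 3 & -2 \\
-2 & -1 & 3
\end{array}\right).
\]
Each row individually realises $\vec{\alpha}(w, M_i) = (0,1,1,0,0,1)$, and the three rows sum column-wise to $\vec{0}$, so Corollary \ref{tyingelectiontable} immediately forces $f'(\vec{\alpha}) = T$.

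For each $n=5$ vector I would construct a $T$-transitive election table with $m=2$ rows $M_1, M_2$ whose voter-wise sum recovers the target multiset of relative votes, and whose two summand vectors $\vec{\alpha}(w,M_i)$ each already tie by an earlier result---either Corollary \ref{b3} (when $\alpha_3 = \alpha_{-3} = 0$) or Lemma \ref{almostb4} (when $\alpha_3 + \alpha_{-3} \geq 2$). A short search should produce such splits; for instance, for $(0,1,2,1,0,1)$ the pair $M_1 = (-3, 1, -3, 3, 2)$, $M_2 = (1, -2, 2, -2, 1)$ gives $\vec{\alpha}(w,M_1) = (2,0,0,1,1,1)$, which ties by Lemma \ref{almostb4}, and $\vec{\alpha}(w,M_2) = (0,2,0,2,1,0)$, which ties by Corollary \ref{b3}; an analogous split, for example $M_1 = (-3,-3,1,3,2)$, $M_2 = (1,1,-2,-1,1)$, should handle $(0,2,1,0,1,1)$.

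The main obstacle I expect is the tightness of the voter-wise decomposition: every pair $(M_{1,v}, M_{2,v})$ must lie in $D_4 \times D_4$ with no zero entries and must sum to the prescribed target; both $M_i$ must independently satisfy $d_1(M_i) = 0$; and, crucially, the forbidden regime $\alpha_3 + \alpha_{-3} = 1$---which is precisely the case not yet handled---must be avoided for \emph{both} summand vectors. Since the $+3$ voter in the $n=5$ targets can only split as $(1,2)$ or $(2,1)$, the $\pm 3$ entries in $M_1$ and $M_2$ must be supplied by the $-2$ and $-1$ voters, so a brief enumeration is needed to find a split where those entries are distributed in balanced multiples (either zero or at least two per summand), placing both summands inside the region already covered by Corollary \ref{b3} and Lemma \ref{almostb4}.
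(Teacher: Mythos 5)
Your proposal is correct and follows essentially the same route as the paper: the enumeration yields the same three vectors (you list them with the index order reversed relative to the paper's convention), the $n=3$ case is handled by the same cyclic tying table, and your explicit $n=5$ tables check out --- every column pair and column sum lies in $D_4$ with no zero entries, each summand has zero Borda score, and each summand has $\alpha_3+\alpha_{-3}$ equal to $0$ or $3$, so Corollary~\ref{b3} or Lemma~\ref{almostb4} applies without circularity. The only difference is the orientation of the $n=5$ tables: you decompose $\vec{\alpha}$ into two vectors already known to tie, whereas the paper sums two copies of $\vec{\alpha}$ into a vector known to tie; this is an inessential variation of the same machinery.
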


\begin{proof}
Consider $\vec{\alpha}' = \vec{\alpha} - (1, 0, 0, 1, 1, 0)$; this has $\sum_i \alpha'_i \leq 2$ and $\sum_i i \alpha'_i = 0$. Moreover $\alpha'_3 = \alpha'_{-3}=0$ so we must have one of the following:

\begin{enumerate}
\item $\vec{\alpha}' = (0,0,1,1,0,0)$;
\item $\vec{\alpha}' = (0,1,0,0,1,0)$; or
\item $\vec{\alpha}' = 0$
\end{enumerate}

In the first case, the following is an $X$-transitive election table where each row has $\vec{\alpha}(w, M_i)=\vec{\alpha}$:

\begin{equation}\begin{aligned}
\left(\begin{array}{c c c c c}
+3 & +1 & -1 & -1 & -2  \\
-2 & +1 & -1 & -1 & +3 
\end{array}\right)
\end{aligned}\end{equation}

But $\vec{\alpha}(w, M_1+M_2) = (0,1,2,0,2,0)$ which has $f'(\vec{\alpha}(w, M_1+M_2)) = T$ by Corollary \ref{b3}, so $f'(\vec{\alpha})=T$ as well.

But the following is an $T$-transitive election table where each row has $\vec{\alpha}(w, M_i)=\varrho(\vec{\alpha})$:

\begin{equation}\begin{aligned}
\left(\begin{array}{c c c c c}
+2 & +1 & -3 & +1 & -1  \\
+1 & +1 & +2 & -3 & -1 
\end{array}\right)
\end{aligned}\end{equation}

Thus $\vec{\alpha}(w, M_1+M_2) = (1,1,0,1,2,0)$ has $f'(\vec{\alpha}(w, M_1+M_2))=T$, which resolves the second case.

Finally, in the third case, the following is a tying election table:

\begin{equation}\begin{aligned}
\left(\begin{array}{c c c}
+3 & -1 & -2 \\
-1 & -2 & +3 \\
-2 & +3 & -1
\end{array}\right)
\end{aligned}\end{equation}

So $f'(\vec{\alpha}) = f'(\vec{\alpha}(w,M_i))=T$ by Corollary \ref{tyingelectiontable} as required.
\end{proof}

We have covered all cases of $\vec{\alpha} \in B'$:

\begin{cor}\label{b4}
If $\vec{\alpha} \in B'$ then $f'(\vec{\alpha})=T$.
\end{cor}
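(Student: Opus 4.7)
The plan is to assemble the previous lemmas into an exhaustive case analysis on $(\alpha_3, \alpha_{-3})$. Since $\vec{\alpha} \in B'$ just means $d_1(\vec{\alpha}) = 0$, and the preceding work has been organised precisely around which values of $\alpha_3 + \alpha_{-3}$ are allowed, the statement should follow immediately once we verify that the cases cover everything.

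First I would split on the value of $\alpha_3 + \alpha_{-3}$. If $\alpha_3 + \alpha_{-3} \neq 1$, Lemma \ref{almostb4} applies directly and gives $f'(\vec{\alpha}) = T$. Otherwise exactly one of $\alpha_3, \alpha_{-3}$ equals $1$ and the other is $0$. Using the consistency relation $f'(\varrho(\vec{\alpha})) = -f'(\vec{\alpha})$ from condition \ref{consistent:twoway}, together with the fact that $\varrho$ swaps the roles of $\alpha_3$ and $\alpha_{-3}$, I would reduce to the single case $\alpha_3 = 1$, $\alpha_{-3} = 0$; the conclusion transfers back to the reversed case because $-T = T$.

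Next I would split the remaining case further on $(\alpha_{-1}, \alpha_{-2})$. If $\alpha_{-2} = 0$, apply Lemma \ref{alphaminus1}; if $\alpha_{-1} = 0$, apply Lemma \ref{alphaminus2}; if both are at least $1$, apply Lemma \ref{alphaminus1alphaminus2} when $n \geq 6$ and the small-$n$ lemma immediately preceding this corollary when $n \leq 5$. These four subcases together exhaust the possibility $\alpha_3 = 1$, $\alpha_{-3} = 0$.

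I do not anticipate any real obstacle: all the combinatorial and transitive-election-table work has already been carried out in the lemmas above, and Corollary \ref{b4} is simply the statement that those lemmas together cover every $\vec{\alpha} \in B'$. The only thing worth double-checking is that the $\varrho$-reduction is legitimate, i.e.\ that $\varrho(\vec{\alpha}) \in B'$ whenever $\vec{\alpha} \in B'$, which is immediate from $d_1(\varrho(\vec{\alpha})) = -d_1(\vec{\alpha}) = 0$.
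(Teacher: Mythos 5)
Your proposal is correct and is essentially the paper's own argument: the corollary is stated there as the immediate conclusion of the exhaustive case split you describe (Lemma \ref{almostb4} for $\alpha_3+\alpha_{-3}\neq 1$, the $\varrho$-symmetry reducing $\alpha_{-3}=1$ to $\alpha_3=1$, and then Lemmas \ref{alphaminus1}, \ref{alphaminus2}, \ref{alphaminus1alphaminus2} and the small-$n$ lemma covering the remaining subcases). Your check that $\varrho$ preserves $B'$ and that $-T=T$ is the right thing to verify, and the four subcases on $(\alpha_{-1},\alpha_{-2})$ do exhaust the case $\alpha_3=1$, $\alpha_{-3}=0$.
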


Now we address all vectors in $A'$. For $n$ odd we define $(0,0,\frac{n-1}{2},\frac{n+1}{2},0,0) = \vec{\beta}$; for $n$ even we define $(0,0,\frac{n}{2},\frac{n}{2}-1,1,0) = \vec{\beta}$. We then define $f'(\vec{\beta})=X_-$.

\begin{lem}\label{firstm1iter}
For $n$ odd, if $\vec{\alpha} \in A'$ with
\begin{itemize}
\item $d_1(\vec{\alpha}) = -1$;
\item $\alpha_3+\alpha_{-1} \leq \frac{n-1}{2}$; and
\item $\alpha_1 + \alpha_{-3} \leq \frac{n+1}{2}$
\end{itemize}
then $f'(\vec{\alpha}) = X_-$.

Moreover, for $n$ even, if $\vec{\alpha} \in A'$ with
\begin{itemize}
\item $d_1(\vec{\alpha}) = -1$;
\item $\alpha_3+\alpha_{-1} \leq \frac{n}{2}$;
\item $\alpha_1 + \alpha_{-3} \leq \frac{n}{2}$; and
\item $\alpha_1 + \alpha_{-1} + \alpha_{-3} \geq 1$
\end{itemize}
then $f'(\vec{\alpha}) = X_-$.
\end{lem}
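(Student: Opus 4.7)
The goal is, for each $\vec{\alpha}$ satisfying the hypotheses, to construct a two-row $X_-$-transitive election table $(w,M)$ with $\vec{\alpha}(w,M_1) = \vec{\beta}$ and $\vec{\alpha}(w,M_2) \in B'$, and with $\vec{\alpha}(w,M_1+M_2) = \vec{\alpha}$. Since $f'(\vec{\beta}) = X_-$ by definition of $X_-$, and $f'(\vec{\alpha}(w,M_2)) = T$ by Corollary \ref{b4}, Lemma \ref{transitivethreeway} will then yield $f'(\vec{\alpha}) = X_-$.

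It is convenient to parametrise such a table by joint column weights $u(\delta,d') \geq 0$ for admissible pairs $(\delta,d') \in D_4 \times D_4$ with $\delta+d' \in D_4$; this amounts to classifying voters by both their $M_1$ and $M_2$ entries. The constraints are that the $\delta$-marginal of $u$ equals $\vec{\beta}$, the $(\delta+d')$-marginal equals $\vec{\alpha}$, and $d_1$ of the $d'$-marginal vanishes. For $n$ odd, $\vec{\beta}$ has only $\beta_{\pm 1}$ nonzero, so $\delta \in \{+1,-1\}$ gives eight admissible pairs. Six values are pinned down by the $\vec{\alpha}$-marginal (for instance $u(+1,2)=\alpha_3$, $u(-1,2)=\alpha_1$, $u(+1,-2)=\alpha_{-1}$, $u(-1,-2)=\alpha_{-3}$), leaving free parameters $a = u(+1,1)$ (with $u(-1,3)=\alpha_2 - a$) and $b = u(-1,-1)$ (with $u(+1,-3)=\alpha_{-2}-b$). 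A single linear constraint remains, which (modulo $\sum_d \alpha_d = n$) fixes $a - b = \alpha_2 + \alpha_1 + \alpha_{-3} - \frac{n+1}{2}$. The bounds $0 \leq a \leq \alpha_2$ and $0 \leq b \leq \alpha_{-2}$ translate, again via $\sum_d \alpha_d = n$, exactly into the two hypotheses $\alpha_1 + \alpha_{-3} \leq \frac{n+1}{2}$ and $\alpha_3 + \alpha_{-1} \leq \frac{n-1}{2}$; integrality of $a$ and $b$ is immediate since $n$ is odd.

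For $n$ even, $\vec{\beta}$ carries an extra column of weight $1$ with $\delta = -2$, and the requirement $\delta + d' \in D_4$ forces $d' \in \{-1,1,3\}$, contributing one unit to exactly one of $\alpha_{-3}$, $\alpha_{-1}$, or $\alpha_1$. The hypothesis $\alpha_1 + \alpha_{-1} + \alpha_{-3} \geq 1$ guarantees that at least one such placement is available: I would pick $d' = 3$ when $\alpha_1 \geq 1$, $d' = -1$ when $\alpha_{-3} \geq 1$, and otherwise $d' = 1$ (in which remaining case $\alpha_{-1} \geq 1$ must hold). After this placement the residual system is structurally identical to the odd case and yields $a - b = \alpha_2 + \alpha_1 + \alpha_{-3} - \frac{n}{2}$ in the first two subcases and $a - b = \alpha_2 + \alpha_1 + \alpha_{-3} - \frac{n}{2} + 1$ in the third; integrality follows from the observation that $\alpha_2 + \alpha_{-2}$ is odd, which in turn follows from $n$ being even and $d_1(\vec{\alpha}) = -1$ being odd.

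The main obstacle is verifying feasibility in the $d' = 1$ subcase, where the bounds on $a - b$ are tightened by one relative to the other two subcases. However, this subcase is only invoked when $\alpha_1 = \alpha_{-3} = 0$, in which event the tightened bounds collapse to $n \geq 2$ (trivial) and $\alpha_3 + \alpha_{-1} \leq \frac{n}{2} + 1$ (weaker than the hypothesis), so the construction still goes through and the proof is complete in all cases.
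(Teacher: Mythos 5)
Your construction is correct and is essentially the paper's own proof in different notation: the joint weights $u(\delta,d')$ with free parameters $a=u(+1,1)$ and $b=u(-1,-1)$ reproduce exactly the paper's explicit table (with $a=\gamma_2$ and $\alpha_{-2}-b=\gamma_{-2}$), the single constraint on $a-b$ matches the paper's choice of $\gamma_2+\gamma_{-2}$, and the even case's extra $\delta=-2$ column with $d'=j+2$ is the paper's handling verbatim, including the use of $\alpha_1+\alpha_{-1}+\alpha_{-3}\geq 1$ to place it. The only cosmetic point is that integrality of $a-b$ in the even case already follows from $\tfrac{n}{2}\in\mathbbm{Z}$; the parity of $\alpha_2+\alpha_{-2}$ is not needed.
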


\begin{proof}
In the odd case, note that $0 \leq \frac{n-1}{2} - \alpha_3 - \alpha_{-1}$ and that $0 \leq \frac{n+1}{2} - \alpha_1 - \alpha_{-3}$. Moreover note that these two values sum to $\alpha_2 + \alpha_{-2}$; so let $\gamma_2$ and $\gamma_{-2}$ be such that $\gamma_2+\gamma_{-2} = \frac{n-1}{2} - \alpha_3 - \alpha_{-1}$ and $0 \leq \gamma_i \leq \alpha_i$. Then the following is a transitive election table:

\begin{equation}\begin{aligned}
\left(\begin{array}{c c c c c c c c}
(\alpha_3) & (\alpha_1) & (\alpha_{-1}) & (\alpha_{-3}) & (\gamma_2) & (\gamma_{-2}) & (\alpha_2 - \gamma_2) & (\alpha_{-2} - \gamma_{-2}) \\
\hline
+1 & -1 & +1 & -1 & +1 & +1 & -1 & -1 \\
+2 & +2 & -2 & -2 & +1 & -3 & +3 & -1
\end{array}\right)
\end{aligned}\end{equation}

Here $\vec{\alpha}(w,M_1) = \vec{\beta}$ and $d_1 \circ \vec{\alpha}(w,M_2) = 0$, so $f'(\vec{\alpha}(w,M_1)) = X_-$ and $f'(\vec{\alpha}(w,M_2)) = T$, and we conclude that $f'(\vec{\alpha}) = f'(\vec{\alpha}(w,M_1+M_2) = X_-$ as required.

In the even case, if $\alpha_1 + \alpha_{-3} = 0$ then $\alpha_{-1} = \alpha_1 + \alpha_{-1} + \alpha_{-3} \geq 1$, and set $j=-1$. Otherwise select $j \in \set{1,-3}$ such that $\alpha_j \geq 1$. Then let $\alpha'_j = \alpha_j-1$ and $\alpha'_i = \alpha_i$ elsewhere. Additionally, note that $0 \leq \frac{n}{2} - \alpha_3 - \alpha_{-1}$ and $0 \leq \frac{n}{2} - \alpha_1 - \alpha_{-3}$, and that these two values sum to $\alpha_2 + \alpha_{-2}$; so let $\gamma_2$ and $\gamma_{-2}$ be such that $\gamma_2+\gamma_{-2} = \frac{n}{2} - \alpha_3 - \alpha_{-1}$ and $0 \leq \gamma_i \leq \alpha_i$.

Now if $j=-1$ then we know that $\alpha_1 + \alpha_{-3}=0$, so $\alpha_2 + \alpha_{-2} + \alpha_3 + \alpha_{-1} =n$, and $\alpha_2+\alpha_{-2} = \gamma_2 + \gamma_{-2} + n/2 \geq \gamma_2 + \gamma_{-2} + 1$. Therefore one of $\gamma_2$ and $\gamma_{-2}$ can be increased to $\gamma'_i = \gamma_i+1$ while maintaining $\gamma'_i \leq \alpha_i$, and we let $\gamma'_i = \gamma_i$ elsewhere. Now $\gamma'_2 + \gamma'_{-2} = \frac{n}{2} - \alpha_3 - \alpha'_{-1}$; the same is true if $j\neq -1$ as in this case $\alpha_1 = \alpha'_{-1}$. Then consider the following transitive election table:

\begin{equation}\begin{aligned}
\left(\begin{array}{c c c c c c c c c}
(\alpha_3) & (\alpha'_1) & (\alpha'_{-1}) & (\alpha'_{-3}) & (\gamma'_2) & (\gamma'_{-2}) & (\alpha_2 - \gamma'_2) & (\alpha_{-2} - \gamma'_{-2}) & (1) \\
\hline
+1 & -1 & +1 & -1 & +1 & +1 & -1 & -1 & -2 \\
+2 & +2 & -2 & -2 & +1 & -3 & +3 & -1 & j+2
\end{array}\right)
\end{aligned}\end{equation}

Here $\vec{\alpha}(w,M_1) = \vec{\beta}$ and $d_1 \circ \vec{\alpha}(w,M_2) = 0$, so $f'(\vec{\alpha}(w,M_1)) = X_-$ and $f'(\vec{\alpha}(w,M_2))=T$, and we conclude that $f'(\vec{\alpha}) = f'(\vec{\alpha}(w,M_1+M_2) = X_-$ as required.
\end{proof}

The following Lemma removes the final condition in the even case:

\begin{lem}\label{evenfix}
For $n$ even, if $\vec{\alpha} \in A'$ with
\begin{itemize}
\item $d_1(\vec{\alpha})=-1$; and
\item $\alpha_1 + \alpha_{-1}+\alpha_{-3} = 0$
\end{itemize}
then $f'(\vec{\alpha}) = X_-$.
\end{lem}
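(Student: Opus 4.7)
The plan is to reduce to Lemma \ref{firstm1iter} by exhibiting an $X_-$-transitive election table of depth two. First observe that $\alpha_1 = \alpha_{-1} = \alpha_{-3} = 0$ together with $d_1(\vec{\alpha}) = -1$ pins $\vec{\alpha} = (\alpha_3, \alpha_2, 0, 0, \alpha_{-2}, 0)$ down to $3\alpha_3 + 2\alpha_2 - 2\alpha_{-2} = -1$; combined with $n = \alpha_3 + \alpha_2 + \alpha_{-2}$ even, reducing modulo $4$ forces $\alpha_3 \equiv 3 \pmod{4}$, so in particular $\alpha_3 \geq 3$ (for smaller $n$ the statement is vacuous).

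I then construct the table with voter types $(m_1, m_2) \in D_4^2$ and weights as follows: $(+1, +2)$ with weight $(\alpha_3+1)/2$ and $(+2, +1)$ with weight $(\alpha_3-1)/2$ (together splitting the $\alpha_3$ voters whose combined vote is $+3$, with both weights integral since $\alpha_3$ is odd); $(+1, +1)$ with weight $\alpha_2$ (the $+2$ voters); and $(-1, -1)$ with weight $\alpha_{-2} - 2b$, together with $(-3, +1)$ and $(+1, -3)$ each with weight $b$ (splitting the $-2$ voters), for a non-negative integer $b$ to be chosen. Substituting $\alpha_{-2} = (3\alpha_3 + 2\alpha_2 + 1)/2$, a short calculation shows $d_1(\vec{\alpha}(w, M_1)) = -1$ and $d_1(\vec{\alpha}(w, M_2)) = 0$ identically in $b$, so the second row lies in $B'$ and therefore has $f'$-value $T$ by Corollary \ref{b4}. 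Each column's combined vote lies in $D_4$, and the sum of weights is $n$, so this is a valid table.

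The main obstacle is choosing $b$ so that $\vec{\alpha}(w, M_1)$ satisfies the remaining inequalities of Lemma \ref{firstm1iter}. The condition $\alpha_1 + \alpha_{-1} + \alpha_{-3} \geq 1$ on $\vec{\alpha}(w, M_1)$ simplifies to $(\alpha_3+1)/2 + \alpha_2 + \alpha_{-2} \geq 1$ and is immediate. Using $n = (5\alpha_3 + 4\alpha_2 + 1)/2$, the other two inequalities reduce to the single double inequality $(\alpha_3 + 1)/8 \leq b \leq (3\alpha_3 - 1)/8$, and this upper bound also subsumes the non-negativity constraint $b \leq \alpha_{-2}/2$. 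The interval contains an integer in every case: when $\alpha_3 = 3$ it is $[1/2, 1]$, so take $b = 1$; when $\alpha_3 \geq 7$ (necessarily $\equiv 3 \pmod 4$) it has length at least $3/2$ and automatically contains an integer. For such a $b$, Lemma \ref{firstm1iter} gives $f'(\vec{\alpha}(w, M_1)) = X_-$, and the two-row transitive election table concludes $f'(\vec{\alpha}) = f'(\vec{\alpha}(w, M_1 + M_2)) = X_-$.
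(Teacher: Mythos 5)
Your proof is correct and is essentially the paper's own argument: the paper uses the identical two-row transitive election table (with the rows in the opposite order and your free parameter $b$ instantiated as $\gamma = \lceil (n/2-\alpha_2-\alpha_3)/2\rceil$), reducing one row to Corollary \ref{b4} and the other to Lemma \ref{firstm1iter} exactly as you do. Your congruence argument giving $\alpha_3 \equiv 3 \pmod 4$, hence $\alpha_3 \geq 3$, is a slightly cleaner way of handling the edge case the paper disposes of by a parity contradiction when $\alpha_3=1$.
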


\begin{proof}
Note that $\alpha_3$ is odd. Let $\gamma = \lceil \frac{n/2 - \alpha_2 - \alpha_3}{2} \rceil$. Then $n - 2\gamma - \alpha_2 - \alpha_3 \geq \frac{n}{2} - 1 \geq 0$, so the following is a transitive election table:

\begin{equation}\begin{aligned}
\left(\begin{array}{c c c c c c}
(\frac{\alpha_3+1}{2}) & (\frac{\alpha_3-1}{2}) & (\alpha_2) & (n - 2\gamma - \alpha_2 - \alpha_3) & (\gamma) & (\gamma) \\
\hline
+2 & +1 & +1 & -1 & +1 & -3 \\
+1 & +2 & +1 & -1 & -3 & +1
\end{array}\right)
\end{aligned}\end{equation}

because $\sum_i w_i = n$ and all $w_i \geq 0$, and $d_1(\vec{\alpha}(w,M_1))=0$ so $f'(\vec{\alpha}(w,M_1))=T$ by Corollary \ref{b4}. On the other hand, letting $\vec{\alpha}(w,M_2)) = \vec{\beta}$ we have $d_1(\vec{\beta}) = -1$, so it suffices to check that $\vec{\beta}$ satisfies the conditions for Lemma \ref{firstm1iter} for $n$ even. We have $\beta_1+\beta_{-1} + \beta_{-3} \geq \beta_1 \geq 1/2>0$ as required, and we require
\begin{itemize}
\item $\alpha_2 + \frac{\alpha_3+1}{2} + 2\gamma = \beta_1 + \beta_{-3} \leq \frac{n}{2}$ and
\item $n-2\gamma - \alpha_2 - \alpha_3 \leq \frac{n}{2}$
\end{itemize}
The second inequality is immediately true by our choice of $\gamma$. But our choice of $\gamma$ also guaranteed that $2\gamma \leq n/2 - \alpha_2 - \alpha_3 + 2$. Then the first inequality holds as long as $\frac{1-\alpha_3}{2} + 2 \leq 0$, which is true for $\alpha_3 \geq 3$. Otherwise we have $\alpha_3=1$. This means that $-1 = d_1(\vec{\alpha}) = 3+2\alpha_2-2\alpha_{-2}$, so $\alpha_2 - \alpha_{-2} = -2$, and $n = 1 + 2\alpha_2 + 2$ which is odd, a contradiction. Thus both inequalities hold, and we can apply Lemma \ref{firstm1iter} to find that $f'(\vec{\beta}) = X_-$, so the above is an $X_-$-transitive election table and $f'(\vec{\alpha}) = f'(\vec{\alpha}(w,M_1+M_2)) = X_-$ as required.
\end{proof}

\begin{lem}\label{secondm1iter}
If $\vec{\alpha} \in A'$ with
\begin{itemize}
\item $d_1(\vec{\alpha}) = -1$;
\item $\alpha_3 + \alpha_{-1} \geq 1$;
\item $\alpha_1 + \alpha_{-3} \geq 1$; and
\item $\alpha_2 + \alpha_{-2} \geq 1$
\end{itemize}
then $f'(\vec{\alpha}) = X_-$.
\end{lem}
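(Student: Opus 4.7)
The plan is to reduce to Lemma \ref{firstm1iter} (or Lemma \ref{evenfix} for $n$ even) by constructing a two-row transitive election table $(w, M)$ with $\vec{\alpha}(w, M_1+M_2) = \vec{\alpha}$, chosen so that $\vec{\alpha}(w, M_1) \in B'$ and hence $f'(\vec{\alpha}(w, M_1)) = T$ by Corollary \ref{b4}, while $\vec{\alpha}(w, M_2)$ satisfies the upper-bound hypotheses of Lemma \ref{firstm1iter}, giving $f'(\vec{\alpha}(w, M_2)) = X_-$. Applying Lemma \ref{transitivethreeway} then yields $f'(\vec{\alpha}) = X_-$ as required.

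Each column of the table encodes a voter with ballot $d \in D_4$ split as $d = m_1 + m_2$ with $m_1, m_2 \in D_4$; the admissible splits are easily enumerated. For the bulk of the voters, I would use ``near-symmetric'' default splits (such as $3 = 1+2$, $2 = 1+1$, $-2 = (-1) + (-1)$, $-3 = (-1) + (-2)$), which contribute equally to $\vec{\gamma}_1 := \vec{\alpha}(w, M_1)$ and $\vec{\gamma}_2 := \vec{\alpha}(w, M_2)$ and preserve Borda balance between the rows. The three lower-bound hypotheses then guarantee at least one voter from each of the pairs $\{+3, -1\}$, $\{+1, -3\}$, $\{+2, -2\}$; I would earmark these three as a ``correction block'' whose splits are chosen asymmetrically to tip $d_1(\vec{\gamma}_2)$ to exactly $-1$ while forcing $d_1(\vec{\gamma}_1) = 0$, and to redistribute weight away from the coordinates $\{+3, -1\}$ and $\{+1, -3\}$ of $\vec{\gamma}_2$ so that the Lemma \ref{firstm1iter} upper bounds hold.

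The main obstacle is the case analysis. With eight sign patterns for the correction block and two parities of $n$, there are around a dozen separate configurations to verify; moreover, when $\vec{\alpha}$ is heavily concentrated on $\pm 3$ ballots, the default splits alone cannot bring $\vec{\gamma}_2$ into the Lemma \ref{firstm1iter} region, and some additional voters must be given ``off-default'' splits (for example $+3 = 3 + (-1) + (+1)$-type redistributions implemented through two columns at once) to shift mass into the $\{+2, -2\}$ coordinates. Checking, in each case, that the column weights are non-negative integers summing to $n$, that each column sum $m_1+m_2$ lies in $D_4$, and that $\vec{\gamma}_2$ genuinely meets the Lemma \ref{firstm1iter} inequalities -- or, when $n$ is even and $\vec{\gamma}_2$ happens to have $\gamma_{2,1}+\gamma_{2,-1}+\gamma_{2,-3}=0$, that Lemma \ref{evenfix} applies instead -- is the computational bulk of the argument.
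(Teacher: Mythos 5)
Your overall strategy is the paper's: split each voter's relative ballot $e_j = u_j + (e_j - u_j)$ with both parts in $D_4$, so that one row of the resulting two-row table lies in $B'$ (hence ties by Corollary \ref{b4}) and the other has $d_1 = -1$ and lands in the region covered by Lemma \ref{firstm1iter} (or Lemma \ref{evenfix}). The lower-bound hypotheses are indeed used exactly as you guess, to make small parity corrections to the target sum. So the architecture is right.

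However, the sketch defers precisely the steps where the paper's proof does its real work, and the devices you propose for them do not survive inspection. First, no ballot value $\pm 1$ admits a ``near-symmetric'' split (since $0 \notin D_4$, the only splits of $+1$ are $2+(-1)$ and $3+(-2)$ up to order), so the default-split scheme is undefined on a class of voters that can constitute most of the electorate; relatedly, a fixed split such as $3=1+2$ does not contribute equally to $d_1$ of the two rows, so to keep $d_1(\vec{\gamma}_1)$ near $0$ you must alternate splits voter by voter (as in Lemma \ref{doubling}), and the residual discrepancy you then need to absorb is not generally within reach of a three-voter correction block --- the paper instead interpolates every voter's split between explicit $u_{\max}$ and $u_{\min}$ tables and runs a discrete intermediate-value argument, with the $\{+1,-3\}$ and $\{+2,-2\}$ voters fixing the residue mod $2$ and mod $4$. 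Second, the genuinely hard case is not concentration on $\pm 3$ but $\alpha_3+\alpha_{-1}$ (or $\alpha_1+\alpha_{-3}$) exceeding roughly $n/2$, where the excess over the Lemma \ref{firstm1iter} bound is of order $n$ and cannot be repaired by finitely many off-default columns; the paper handles it by choosing the split tables so that \emph{every} voter in the over-represented pair contributes an even value to the non-tying row, making the bound automatic. Finally, a decomposition like ``$+3 = 3 + (-1) + (+1)$ implemented through two columns'' is not meaningful in a two-row table: each column is a fixed mass of voters with a single split $d = m_1 + m_2$, and one voter cannot be spread over two columns. As written, the proposal identifies the correct skeleton but leaves a genuine gap at the quantitative heart of the argument.
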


\begin{proof}
For $n$ odd, Lemma \ref{firstm1iter} covers all but the cases where either $\alpha_3+\alpha_{-1}>\frac{n+1}{2}$ (which we will call the positive case) or $\alpha_1+\alpha_{-3} > \frac{n+3}{2}$ (the negative case). For $n$ even, we apply either Lemma \ref{firstm1iter} or Lemma \ref{evenfix} unless we are in either the positive case, which becomes $\alpha_3 + \alpha_{-1} >\frac{n}{2}$, or the negative case, which becomes $\alpha_1 + \alpha_{-3} > \frac{n}{2}$. The arguments for the positive and negative cases are almost identical.

Let $\vec{e} \in A$ be such that $\tau_A(\vec{e})=\vec{\alpha}$, and for $i \in D_4$ let $A_i = \set{j: e_j=i}$. Now we define the functions $u_{\max}^+$, $u_{\max}^-$, $u_{\min}^+$ and $u_{\min}^-$ as follows:

\begin{equation}\begin{aligned}
\begin{matrix}
i & u_{\max}^+(i) & u_{\max}^-(i) & u_{\min}^+(i) & u_{\min}^-(i) \\
\hline
+3 & +2 & +2 & +2 & +1 \\
+2 & +3 & +3 & -1 & -1 \\
+1 & +3 & +2 & -2 & -2 \\
-1 & +2 & +2 & -2 & -3 \\
-2 & +1 & +1 & -3 & -3 \\
-3 & -1 & -2 & -2 & -2
\end{matrix}
\end{aligned}\end{equation}

Now we define $\vec{u}_{\max}$ and $\vec{u}_{\min}$ such that for $j \in A_i$ we have $u_{\max,j} = u_{\max}^+(i)$ in the positive case and $u_{\max}^-(i)$ in the negative case, and similarly for $\vec{u}_{\min}$. Note that $\vec{e}, \vec{u}_{\max}, \vec{u}_{\min}, \vec{e}+\vec{u}_{\max}, \vec{e}+\vec{u}_{\min} \in D_4^n$. We will define an intermediate $\vec{u} \in D_4^n$ for which $d_1 \circ \tau_A(\vec{u})=\sum_i u_i = -1$, by altering $\vec{u}_{\max}$ incrementally until it reached $\vec{u}_{\min}$, and halting somewhere in that process.

First note that in the positive case, $\sum_j u_{\max,j} \geq -1$ unless $\alpha_{-3} > \alpha_{-2}+\alpha_{-1}+\alpha_1+\alpha_2+\alpha_3$, which is not possible as in that case $d_1(\vec{\alpha})\leq -3 < -1$. Similarly in the negative case $\sum_j u_{\min,j} \leq -1$ unless $\alpha_3 \geq \alpha_2+\alpha_1+\alpha_{-1}+\alpha_{-2}+\alpha_{-3}$, which is not possible as in that case $d_1(\vec{\alpha})\geq 0>-1$.

Moreover, in the positive case, $\sum_j u_{\min,j} \leq -1$ unless $\alpha_{-3} > \alpha_3 + 3\alpha_2+3\alpha_1+\alpha_{-1}+\alpha_{-2}$, which is not possible as in that case $d_1(\vec{\alpha})\leq -3 < -1$. Similarly, in the negative case, $\sum_j u_{\max,j} \geq -1$ unless $\alpha_3 > \alpha_{-3}+3\alpha_{-2}+3\alpha_1+\alpha_{-1}+\alpha_{-2}$, which is not possible as in that case $d_1(\vec{\alpha})\geq 3>-1$.

Therefore in both cases, by varying $\vec{u}$ to be between $\vec{u}_{\max}$ and $\vec{u}_{\min}$ at each entry, we can vary $d_1 \circ \tau_A(\vec{u})$ over a range that includes $-1$. In order to check that we can indeed achieve $-1$, we need to consider the increments that can be added to or subtracted from $d_1 \circ \tau_A(\vec{u})$. We will proceed in the positive case; for the negative case, we do the negative version of everything (so we start with $\vec{u}_{\min}$ and incrementally increase $\vec{u}$). Suppose that we start at $\vec{u}_{\max}$ and the total decrease required is $\sum_i u_{\max,i} + 1 = d$.

We let $X = \set{1,-3}$ in the positive case (so $X=\set{3,-1}$ in the negative case). If $d$ is odd, then since $\sum_{x \in X} \alpha_x \geq 1$ we can pick $i_1 \in \cup_{x \in X} A_x$ and decrease $u_{i_1}$ by $1$. This updates the value of $d$ to $d-1$ even.

Now we pair up all $i \in \cup_{x \in X} A_x$ (excluding $i_1$ if it exists) into pairs $(j_1, j_2) \in P_1$ and promise that we will only reduce $u_{j_1}$ by $1$ if we are also reducing $u_{j_2}$ by $1$. With the possible exception of a single $i^{(1)}$ which cannot be paired, this means that all of these indices can now be treated as new indices $j \in P_1$ which permit a reduction of $\vec{u}$ by $2$.

Now if $d$ is even but not a multiple of $4$, then since $\alpha_2+\alpha_{-2} \geq 1$ we can pick $i_2 \in A_2 \cup A_{-2}$ and decrease $u_{i_2}$ by $2$ (from $3$ to $1$ if $i_2 \in A_2$ or from $-1$ to $-3$ if $i_2 \in A_{-2}$). This updates the value of $d$ to $d-2$ a multiple of $4$.

Now we pair up all $i \in P_1$, along with $i_2$ if it exists, into pairs $(j_1,j_2) \in P_2$ and promise that we will only reduce $u_{j_1}$ by $2$ if we are also reducing $u_{j_2}$ by $2$. With the possible exception of a single $i^{(2)}$ which cannot be paired, this means that all of these indices can now be treated as new indices $j \in P_2$ which permit a reduction of $\vec{u}$ by $4$.

Now $d$ can be reduced by $4$ by any of the following changes to $\vec{u}$:

\begin{itemize}
\item reducing $u_i$ from $2$ to $-2$ for $i \in A_{-1}$ in the positive case or $i \in A_1$ in the negative case;
\item reducing $u_i$ from $3$ to $-1$ for $i \in A_2$;
\item reducing $u_i$ from $3$ to $-1$ or from $2$ to $-2$ for $i \in A_1$ in the positive case, or from $2$ to $-2$ or from $1$ to $-3$ for $i \in A_{-1}$ in the negative case;
\item reducing $u_i$ from $1$ to $-3$ for $i \in A_{-2}$;
\item reducing $u_j$ by $4$ for $j \in P_2$.
\end{itemize}

After completing all of these reductions, and further reducing $u_{i^{(1)}}$ by $1$ if it exists and $u_{i^{(2)}}$ by $2$ if it exists, we must be in a case where $u_j = u_{\min}(i)$ for $j \in A_i$. At this point, $\sum_i u_i \leq -1$. Now if we had not carried out the reductions of $u_{i^{(1)}}$ and  $u_{i^{(2)}}$, we would have had a value at most three more than this; so in this case we would have $\sum_i u_i \leq 2$. On the other hand, we would never have deviated from $d$ being a multiple of $4$, so we would have $\sum_i u_i \equiv -1 \pmod 4$. So indeed $\sum_i u_i \leq -1$ and we must have had some stage in the reductions of $d$ where $d=0$; for this $\vec{u}$ we have $d_1(\vec{u})=-1$, $d_1(\vec{e})=-1$, and $\vec{u}-\vec{e} \in D_4^n$ with $d_1(\vec{e}-\vec{u})=0$. Thus by Corollary \ref{b4} we know that $f' \circ \tau_A(\vec{e}-\vec{u})=T$.

On the other hand, letting $\vec{\beta} = \tau_A(\vec{u})$ we have $\beta_3 + \beta_1 + \beta_{-1} + \beta_{-3} \leq n - \sum_{x \in \set{-3,-1,1,3} \setminus X} \alpha_x \leq \frac{n-1}{2}$, so $\beta_3 + \beta_{-1} \leq \frac{n-1}{2}$ and $\beta_1 + \beta_{-3} \leq \frac{n+1}{2}$. Therefore for $n$ odd Lemma \ref{firstm1iter} applies and $f'(\vec{\beta}) = X_-$; and therefore $f'(\vec{\alpha})=X_-$ as required. For $n$ even, Lemma \ref{firstm1iter} still applies unless $\beta_1 + \beta_{-1}+\beta_{-3}=0$, in which case we apply Lemma \ref{evenfix}.
\end{proof}

\begin{lem}\label{thirdm1iterA}
If $\vec{\alpha} \in A'$ with
\begin{itemize}
\item $d_1(\vec{\alpha}) = -1$;
\item $\alpha_2 + \alpha_{-2} \geq 1$; and
\item $\alpha_i + \alpha_{-3i} = 0$ for some $i=\pm 1$
\end{itemize}
then $f'(\vec{\alpha}) = X_-$.
\end{lem}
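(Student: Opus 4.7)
My plan is to reduce to Lemma \ref{secondm1iter} by constructing a two-row $X_-$-transitive election table. By symmetry in $i$, it suffices to treat the case $i=1$, so from now on assume $\alpha_1 = \alpha_{-3} = 0$. A useful preliminary: if also $\alpha_3 = \alpha_{-1} = 0$ then $d_1(\vec{\alpha}) = 2\alpha_2 - 2\alpha_{-2}$ would be even, contradicting $d_1(\vec{\alpha}) = -1$; hence $\alpha_3 + \alpha_{-1} \geq 1$ as well, so voters exist in both the ``outer'' pair of classes $A_2 \cup A_{-2}$ and the ``inner'' pair $A_3 \cup A_{-1}$ (where $A_j := \{v : e(v) = j\}$ for $\vec{e} \in A$ with $\tau_A(\vec{e}) = \vec{\alpha}$). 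Moreover the parity constraint rules out the degenerate case $\alpha_2 + \alpha_{-2} = \alpha_3 + \alpha_{-1} = 1$: that would force $n = 2$, and a direct check of the four subcases shows none gives $d_1 = -1$. Hence at least one of these pair-sums is at least $2$.

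The construction realises $\vec{\alpha} = \vec{\alpha}(w, M_1 + M_2)$ at the voter level by splitting each $e(v)$ as $a_1(v) + a_2(v)$ with both summands in $D_4$, producing $\vec{\beta} := \tau_A(\vec{a}_1)$ and $\vec{\gamma} := \tau_A(\vec{e} - \vec{a}_1)$. I require $\sum_v a_1(v) = -1$ (so $d_1(\vec{\gamma}) = 0$ and $f'(\vec{\gamma}) = T$ by Corollary \ref{b4}) together with the three mixed-pair conditions $\beta_2 + \beta_{-2} \geq 1$, $\beta_1 + \beta_{-3} \geq 1$, and $\beta_3 + \beta_{-1} \geq 1$ (so Lemma \ref{secondm1iter} then delivers $f'(\vec{\beta}) = X_-$). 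Each condition on $\vec{\beta}$ is arranged by a pivot voter. The crucial one is $\beta_3 + \beta_{-1} \geq 1$: it requires some $v$ with $a_1(v) \in \{3, -1\}$, and one checks that the only $e(v) \in D_4$ which admits such an $a_1$ (given $\alpha_1 = \alpha_{-3} = 0$) lies in $A_2 \cup A_{-2}$, split for instance as $2 = 3 + (-1)$ or $-2 = (-1) + (-1)$. The condition $\beta_2 + \beta_{-2} \geq 1$ is supplied by a voter in $A_3 \cup A_{-1}$ split as $3 = 2 + 1$ or $-1 = -2 + 1$. And $\beta_1 + \beta_{-3} \geq 1$ is met by a second outer pivot split as $2 = 1+1$ or $-2 = -3 + 1$ whenever $\alpha_2 + \alpha_{-2} \geq 2$, and otherwise by an additional inner voter split as $3 = 1 + 2$ or $-1 = 1 + (-2)$, which is available because the preliminary observation then forces $\alpha_3 + \alpha_{-1} \geq 2$.

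All remaining voters are split using the ``default'' decompositions $(a_1, a_2) \in \{(2,1), (1,2), (-2,1), (1,-2), (-1,-1), (3,-1), (-1,3), (-3,1), (1,-3)\}$; the freedom to toggle between these options permits $\sum_v a_1(v)$ to be shifted by integer amounts. The main obstacle is to verify that the target value $-1$ always lies in the attainable range given the fixed pivot choices, which reduces to a short case analysis distinguishing whether each of $\alpha_2, \alpha_{-2}, \alpha_3, \alpha_{-1}$ is zero, equals one, or exceeds one; in each subcase an explicit compatible splitting is exhibited (computing the minimum and maximum of $\sum_v a_1(v)$ across legal assignments shows generically that $-1$ is in the interior of the range, leaving only the handful of boundary subcases to be treated by hand). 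Once the splitting has been chosen, Lemma \ref{transitivethreeway} converts $\vec{a}_1$ and $\vec{a}_2$ into an $X_-$-transitive election table whose row vectors are $\vec{\beta}$ and $\vec{\gamma}$, from which we conclude $f'(\vec{\alpha}) = X_-$.
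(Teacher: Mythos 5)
Your overall architecture---split $\vec{e} = \vec{a}_1 + \vec{a}_2$ voter by voter so that one part is a Borda tie (Corollary \ref{b4}) and the other is a $d_1=-1$ vector already known to give $X_-$---is the same as the paper's, but you aim the $d_1=-1$ part at the \emph{hypotheses of Lemma \ref{secondm1iter}}, whereas the paper patches the \emph{proof} of Lemma \ref{secondm1iter} and aims at Lemma \ref{firstm1iter} (or Lemma \ref{evenfix}). That difference is fatal: the step you defer as ``a short case analysis'' is not merely unfinished, it is false. Take $i=1$ and $\alpha_3+\alpha_{-1}=1$, which your preliminary observations allow; a concrete instance is $\vec{\alpha}=(1,0,0,0,2,0)$ with $n=3$, so $d_1=3-4=-1$. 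Your Pivot B forces the unique inner voter to take $a_1\in\{2,-2\}$, an even value (and when $\alpha_{-1}=0$ this must be the class-$3$ voter taking $a_1=2$). Every voter in $A_2\cup A_{-2}$ contributes an odd value of $a_1$ under every legal split ($2=3+(-1)=1+1$, $-2=(-1)+(-1)=(-3)+1$), and the identity $3\alpha_3+2\alpha_2-\alpha_{-1}-2\alpha_{-2}=-1$ forces $\alpha_2+\alpha_{-2}$ to be even whenever $\alpha_3+\alpha_{-1}=1$. Hence $\sum_v a_1(v)$ is even and can never equal $-1$: no splitting compatible with your pivots exists, and this happens for the entire families $(\alpha_3,\alpha_2,\alpha_{-1},\alpha_{-2})=(1,q,0,q+2)$ and $(0,q,1,q)$. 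This also shows that your ``minimum and maximum of the range'' heuristic is unsound: attainability depends on the step sizes of the available toggles (here $\pm 2$, $\pm 3$, $\pm 4$ once the pivots are fixed), not just on the endpoints of the range.

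The lemma remains true in these cases only because such $\vec{\alpha}$ satisfy the hypotheses of Lemma \ref{firstm1iter} directly ($\alpha_3+\alpha_{-1}=1\le\frac{n-1}{2}$ and $\alpha_1+\alpha_{-3}=0$), which is exactly why the paper routes the $d_1=-1$ part through Lemma \ref{firstm1iter}: its conditions on $\vec{\beta}$ are upper bounds, not the lower bounds $\beta_1+\beta_{-3}\ge 1$, $\beta_2+\beta_{-2}\ge 1$, $\beta_3+\beta_{-1}\ge 1$ that over-constrain your splitting. To repair the argument you would need at minimum to (i) first dispose of all $\vec{\alpha}$ to which Lemma \ref{firstm1iter} or Lemma \ref{evenfix} applies directly, and (ii) actually carry out the attainability analysis for the remainder, tracking the parities and residues of the available adjustments rather than only the range they span.
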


\begin{proof}
In the argument above, in the case where $\alpha_{-i} + \alpha_{3i}$ is too large for Lemma \ref{firstm1iter} to apply, we only required that $\alpha_i + \alpha_{-3i}>0$ in order to adjust $d$ to be even. Suppose instead that $\alpha_i + \alpha_{-3i} = 0$. For any $j \in A_{-i} \cup A_{3i}$ we can reduce $u_j$ (or increase it in the negative case) by $1$ so that $d$ is even; then the proof proceeds normally. The only case in which this doesn't work is if the resulting $\vec{\beta}$ has $\beta_{-i}+\beta_{3i}$ too large to apply Lemma \ref{firstm1iter}. The cases where this could occurs are as follows:

\begin{itemize}
\item $\alpha_1 + \alpha_{-3}=0$, $n$ odd, $\beta_1 + \beta_{-3} \geq \frac{n+3}{2}$
\item $\alpha_3 + \alpha_{-1}=0$, $n$ odd, $\beta_3 + \beta_{-1} \geq \frac{n+1}{2}$
\item $\alpha_1 + \alpha_{-3}=0$, $n$ even, $\beta_1 + \beta_{-3} \geq \frac{n}{2}+1$
\item $\alpha_3 + \alpha_{-1}=0$, $n$ even, $\beta_3 + \beta_{-1} \geq \frac{n}{2}+1$
\end{itemize}

In the first case, $\beta_1 + \beta_{-3} \leq \frac{n-1}{2}+1$, a contradiction.
In the second case, $\beta_3 + \beta_{-1} \leq \frac{n-3}{2}+1$, a contradiction.
In the third and fourth case, $\beta_1 + \beta_{-3} \leq \frac{n}{2}$, a contradiction.
So this alternative $\vec{u}$ always works, and the lemma holds.
\end{proof}

If $\alpha_2+\alpha_{-2} = n$ then $d_1(\vec{\alpha})$ is even, a contradiction; so we have covered all cases in which $\alpha_2+\alpha_{-2} >0$. Next we deal with cases where $\alpha_2+\alpha_{-2} =0$.

\begin{lem}\label{thirdm1iter}
If $\vec{\alpha} \in A'$ with
\begin{itemize}
\item $d_1(\vec{\alpha}) = -1$;
\item $\alpha_1 + \alpha_{-3} > 0$; and
\item $\alpha_2 + \alpha_{-2} = 0$
\end{itemize}
then $f'(\vec{\alpha}) = X_-$.
\end{lem}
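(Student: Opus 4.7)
The plan is to build a two-row $X_-$-transitive election table $(w, M)$ with $w = \mathbbm{1}$, in which the rows $M_1, M_2$ realise a voter-by-voter splitting of some $\vec{e} \in D_4^n$ with $\tau_A(\vec{e}) = \vec{\alpha}$. The goal is to arrange $\vec{\alpha}(w, M_1)$ to have Borda score zero, so that $f'(\vec{\alpha}(w, M_1)) = T$ by Corollary \ref{b4}, and $\vec{\alpha}(w, M_2)$ to satisfy the hypotheses of Lemma \ref{thirdm1iterA}, so that $f'(\vec{\alpha}(w, M_2)) = X_-$. Transitivity through $\vec{\alpha}(w, M_1 + M_2) = \vec{\alpha}$ will then give $f'(\vec{\alpha}) = X_-$ as required.

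The hypothesis $\alpha_2 + \alpha_{-2} = 0$ forces each $e_j \in \{\pm 1, \pm 3\}$, and $d_1(\vec{\alpha}) = -1$ being odd forces $n$ odd. For each voter I pick a split $e_j = u_{1, j} + u_{2, j}$ with both parts in $D_4$. I split into the two cases $\alpha_1 \geq 1$ and $\alpha_{-3} \geq 1$, targeting case $i = 1$ (respectively $i = -1$) of Lemma \ref{thirdm1iterA}. In the first case I forbid every $u_{2, j} \in \{1, -3\}$, which restricts $e = 3$ to the unique split $(1, 2)$, $e = -3$ to two splits yielding $u_1 \in \{-1, -2\}$, $e = 1$ to four splits yielding $u_1 \in \{3, 2, -1, -2\}$, and $e = -1$ to two splits yielding $u_1 \in \{-3, 1\}$. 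I reserve a distinguished voter of type $e = 1$ as a ``free'' voter whose split I tune last to make $\sum_j u_{1, j} = 0$. The supplementary requirement that $\vec{\alpha}(w, M_2)$ has some $u_{2, j} = \pm 2$ is easily satisfied, since splits from every voter type have this value among their admissible outputs.

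The hard part will be verifying integer solvability of $\sum_j u_{1, j} = 0$ across all configurations of $\vec{\alpha}$. The parity of $n$ is aligned with the parity of the constraints, so no global obstruction arises; the four-way flexibility of the distinguished $e = 1$ voter combined with the binary choice available to each $e = -1$ voter (the two options differing by $4$ in $u_1$-contribution) should span enough residues to reach zero. I expect this to reduce to a short finite case split on the support of $\vec{\alpha}$; a few extreme configurations (such as $\alpha_3 = 0$ with small $\alpha_{-1}$) may require explicit small transitive election tables in the style of Lemmas \ref{alphaminus1} and \ref{alphaminus2}.
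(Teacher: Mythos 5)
Your overall strategy---split a realisation $\vec{e}$ of $\vec{\alpha}$ voter-by-voter into a Borda-zero row (handled by Corollary \ref{b4}) and a row landing in the scope of Lemma \ref{thirdm1iterA}---is sound in outline and genuinely different from the paper's proof, which instead subtracts a single fixed template $\vec{b}$ realising $\vec{\beta}=(0,\frac{n-1}{2},0,1,\frac{n-1}{2},0)$ (handled by Lemma \ref{firstm1iter}) from a suitably ordered realisation of $\vec{\alpha}$, checking entrywise that the difference stays in $D_4^n$. Your enumeration of the admissible splits is correct, and so is the observation that $n$ must be odd. But the step you defer---``verifying integer solvability of $\sum_j u_{1,j}=0$''---is exactly where the content of the lemma lies, and the heuristic offered (``no global obstruction arises'') is not enough: the contribution sets have genuine gaps (an $e=1$ voter in your first branch contributes $u_1\in\{-2,-1,2,3\}$, missing $0$ and $1$; an $e=-1$ voter contributes values spaced $4$ apart), so reaching $0$ requires a real argument, for instance a mod-$4$ analysis of which residues the union of attainable intervals covers.

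Worse, the second branch of your case split fails on concrete instances. Take $n=3$ and $\vec{\alpha}=(1,0,0,1,0,1)$, i.e.\ $\alpha_3=\alpha_{-1}=\alpha_{-3}=1$: this has $d_1(\vec{\alpha})=-1$, $\alpha_1+\alpha_{-3}=1>0$ and $\alpha_2+\alpha_{-2}=0$. Since $\alpha_1=0$ you must target $i=-1$ and forbid $u_{2,j}\in\{-1,3\}$; then the $e=3$ voter gives $u_1\in\{1,2\}$, the $e=-1$ voter gives $u_1\in\{-3,-2,1,2\}$, and the $e=-3$ voter is forced to $u_1=-1$, so the attainable values of $\sum_j u_{1,j}$ are $\{-3,-2,-1,1,2,3\}$ and $0$ is unreachable. (Note also that in this branch the $e=-3$ voter has no freedom at all, so the ``free distinguished voter'' device does not transfer symmetrically.) The construction can be repaired: the $i=1$ branch in fact covers every $\vec{\alpha}$ satisfying the hypotheses, since the attainable set there is a union of intervals $[m_{\min}+4h,\,m_{\min}+4h+\alpha_1+\alpha_{-3}]$ over admissible $h$, and the relation $d_1(\vec{\alpha})=-1$ produces an admissible $h$ with $m_{\min}+4h=1-\alpha_1-\alpha_{-3}\leq 0$. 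But as written the proposal has a genuine gap, and the claim that producing a $\pm2$ entry in $M_2$ is ``easily satisfied'' also needs checking against the splits actually chosen, in the corner case $\alpha_3=\alpha_{-1}=0$.
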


\begin{proof}
First note that in this case we must have $n$ odd. We let $\vec{\beta} = (0,\frac{n-1}{2},0,1,\frac{n-1}{2},0)$. Moreover let $\vec{b} \in D_4^n$ be defined as follows: $b_i = 2$ for $i \leq \frac{n-1}{2}$, $b_i = -2$ for $\frac{n-1}{2} < i \leq n-1$, and $b_n = -1$. Clearly $\tau_A(\vec{b})=\vec{\beta}$.

Furthermore, define $\vec{a}$ by

\begin{equation}\begin{aligned}
a_i = \begin{cases}
+3 & i \leq \alpha_3 \\
-1 & \alpha_3 < i \leq \alpha_3 + \alpha_{-1} \\
+1 & \alpha_3 + \alpha_{-1} < i \leq \alpha_3 + \alpha_{-1} + \alpha_1 \\
-3 & \alpha_3 + \alpha_{-1} + \alpha_1 < i
\end{cases}
\end{aligned}\end{equation}

Now $f' \circ \tau_A(\vec{b}) = X_-$ by Lemma \ref{firstm1iter}; moreover, $d_1 \circ \tau_A(\vec{a}) = d_1 \circ \tau_A(\vec{b}) = -1$, so if $\vec{a}-\vec{b} \in D_4^n$ then $d_1 \circ \tau_A(\vec{a}-\vec{b}) = 0$ and $f' \circ \tau_A(\vec{a}-\vec{b}) = T$ by Corollary \ref{b4}; this would imply that $f' \circ \tau_A(\vec{a}) = f'(\vec{\alpha})=X_-$. So all that's left is to check that indeed $\vec{a}-\vec{b} \in D_4^n$.

For $i \leq \frac{n-1}{2}$, $a_i - b_i \in D_4$ unless $a_i = -3$. But this would imply that $\alpha_{-3} > \frac{n+1}{2}$, so $d_1(\vec{\alpha}) < -3 <-1$, a contradiction.

For $\frac{n-1}{2} < i \leq n-1$, $a_i-b_- \in D_4$ unless $a_i = 3$. But this would imply that $\alpha_3 > \frac{n-1}{2}$, so $\alpha_3 \geq \frac{n-1}{2}$ and $d_1(\vec{\alpha}) \geq 3>-1$, a contradiction.

Finally, $a_n - b_n \in D_4$ unless $\alpha_3 + \alpha_{-1} = n$. In this case $\alpha_1 + \alpha_{-3}=0$, contradicting the assumption of the Lemma. So indeed $\vec{a}-\vec{b} \in D_4^n$ and $f'(\vec{\alpha})=X_-$ as required.
\end{proof}

This leaves a final case.

\begin{lem}
If $\vec{\alpha} \in A'$ with
\begin{itemize}
\item $d_1(\vec{\alpha}) = -1$; and
\item $\alpha_3+\alpha_{-1} = n$
\end{itemize}
then $f'(\vec{\alpha}) = X_-$.
\end{lem}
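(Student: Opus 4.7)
The constraints $\alpha_3 + \alpha_{-1} = n$ and $d_1(\vec{\alpha}) = 3\alpha_3 - \alpha_{-1} = -1$ together pin down $\alpha_3 = (n-1)/4$ and $\alpha_{-1} = (3n+1)/4$, forcing $n \equiv 1 \pmod 4$. Write $n = 4m + 1$. For $m = 0$ (i.e.\ $n = 1$) we have $\vec{\alpha} = (0,0,0,1,0,0) = \vec{\beta}$, so $f'(\vec{\alpha}) = X_-$ by the definition preceding Lemma \ref{firstm1iter}; from now on assume $m \geq 1$.

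The plan for $m \geq 1$ is to build a two-row $X_-$-transitive election table by splitting each voter's vote. Fix $\vec{e} \in D_4^n$ with $\tau_A(\vec{e}) = \vec{\alpha}$, so $m$ voters vote $+3$ and $3m+1$ voters vote $-1$. Pick $\vec{u} \in D_4^n$ as follows: exactly one of the $+3$-voters gets $u = 2$ and the remaining $m-1$ get $u = 1$; among the $-1$-voters, $2m$ get $u = 1$, one gets $u = -2$, and the remaining $m$ get $u = -3$. Setting $M_{1,v} = u_v$ and $M_{2,v} = e_v - u_v$, a brief case-check confirms every entry of $M_1$ and $M_2$ lies in $D_4$, and $M_1 + M_2 = \vec{e}$.

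A direct count gives $\tau_A(M_1) = (0, 1, 3m-1, 0, 1, m)$, which has $d_1 = -1$, $\alpha_3 + \alpha_{-1} = 0$, and $\alpha_2 + \alpha_{-2} = 2 \geq 1$; applying Lemma \ref{thirdm1iterA} with $i = -1$ gives $f'(\tau_A(M_1)) = X_-$. Similarly $\tau_A(M_2) = (0, 2m-1, 2, 0, 2m, 0)$ has $d_1 = 0$, so $f'(\tau_A(M_2)) = T$ by Corollary \ref{b4}. Therefore (taking $w = \mathbbm{1}_n$) $(w, M)$ is a valid $X_-$-transitive election table with $M_1 + M_2 = \vec{e}$, and Corollary \ref{transitiveelectiontable} yields $f'(\vec{\alpha}) = X_-$.

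The main obstacle is locating a workable splitting: because every $e_v$ is $+3$ or $-1$, the $u_v$ values are forced into $\set{1,2}$ (for the $+3$-voters) or $\set{-3,-2,1,2}$ (for the $-1$-voters) in order that both $\vec{u}$ and $\vec{e} - \vec{u}$ belong to $D_4^n$. Within these strict constraints one still has just enough freedom to tune $\tau_A(\vec{u})$ so that the hypotheses of Lemma \ref{thirdm1iterA} (in particular $\alpha_3 + \alpha_{-1} = 0$) hold, while simultaneously forcing $\tau_A(\vec{e} - \vec{u})$ to be a Borda tie.
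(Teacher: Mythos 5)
Your proposal is correct and follows essentially the same strategy as the paper: split a representative $\vec{e}\in D_4^n$ of the all-$\{+3,-1\}$ election into two rows of a transitive election table, one whose $\tau_A$-image has $d_1=-1$ with $\alpha_3+\alpha_{-1}=0$ and $\alpha_2+\alpha_{-2}\geq 1$ (so Lemma \ref{thirdm1iterA} gives $X_-$) and one with $d_1=0$ (so Corollary \ref{b4} gives $T$), handling $n=1$ separately via the definition of $X_-$. The paper merely chooses a different explicit $X_-$ reference vector for the first row; your arithmetic and the membership checks in $D_4$ all verify.
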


\begin{proof}
We repeat the argument from above, but let $\vec{\beta} = (0,\frac{n-1}{2}, 1, 0, \frac{n-5}{2}, 2)$. We have $f'(\vec{\beta})=X_-$ by Lemma \ref{thirdm1iterA}. Now $\vec{b}$ is defined such that $b_i = 2$ for $i \leq \frac{n-1}{2}$, $b_i = -2$ for $\frac{n-1}{2} < i \leq n-3$, $b_{n-2} = b_{n-1}=-2$ and $b_n = 1$. Moreover $\vec{a}$ has $a_i = 3$ for $i \leq \frac{n-1}{4}$ and $a_i = -1$ otherwise.

Now clearly $a_i-b_i \in D_4$ for $i \leq n-3$; for $i > n-3$ we only require that $a_i = -1$. Suppose to the contrary; then $\frac{n-1}{4}\geq n-2$, so $n \leq 2$. But $n \equiv 1 \pmod 4$ so $n=1$ and we are in the case of one voter, where $X_-$ was defined as $f'((0,0,0,1,0,0))$, which is exactly this case.

Thus we can conclude as above, and find that $f'(\vec{\alpha})=X_-$.
\end{proof}

We have demonstrated that $f'(\vec{\alpha})$ is constant over all $\vec{\alpha}$ with $d_1(\vec{\alpha})=-1$:

\begin{cor}\label{m1}
If $\vec{\alpha} \in A'$ with $d_1(\vec{\alpha})=-1$, then $f'(\vec{\alpha}) = X_-$.
\end{cor}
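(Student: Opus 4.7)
The plan is a straightforward case analysis: the entire technical content has already been carried out in the preceding lemmas, and the corollary just stitches them together. I will partition $A'$ (restricted to $d_1(\vec{\alpha})=-1$) into cases according to whether $\alpha_2+\alpha_{-2}$ is positive or zero, and within each case according to which of $\alpha_3+\alpha_{-1}$ and $\alpha_1+\alpha_{-3}$ vanish.

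First I would observe a useful parity remark: if $\alpha_3+\alpha_{-1}=0$ and $\alpha_1+\alpha_{-3}=0$ simultaneously, then $d_1(\vec{\alpha})=2(\alpha_2-\alpha_{-2})$ is even, contradicting $d_1(\vec{\alpha})=-1$. So at most one of those two sums can vanish.

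Next, suppose $\alpha_2+\alpha_{-2}\geq 1$. If additionally $\alpha_3+\alpha_{-1}\geq 1$ and $\alpha_1+\alpha_{-3}\geq 1$, then Lemma \ref{secondm1iter} applies directly and gives $f'(\vec{\alpha})=X_-$. Otherwise, by the parity remark exactly one of $\alpha_3+\alpha_{-1}$ and $\alpha_1+\alpha_{-3}$ is zero; taking $i=1$ in the first case and $i=-1$ in the second, the hypothesis $\alpha_i+\alpha_{-3i}=0$ of Lemma \ref{thirdm1iterA} is met and again $f'(\vec{\alpha})=X_-$.

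Finally, suppose $\alpha_2+\alpha_{-2}=0$. If $\alpha_1+\alpha_{-3}>0$, then Lemma \ref{thirdm1iter} applies and yields $f'(\vec{\alpha})=X_-$. Otherwise $\alpha_1+\alpha_{-3}=0$ as well, and combined with $\alpha_2+\alpha_{-2}=0$ we get $\alpha_3+\alpha_{-1}=n$; the preceding lemma (the one immediately before the corollary) covers exactly this situation and again gives $f'(\vec{\alpha})=X_-$. These cases are exhaustive, so the corollary follows. The only mild obstacle is checking exhaustiveness, which reduces to the one-line parity observation above; no further transitive election tables are needed here.
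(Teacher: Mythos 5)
Your case analysis is correct and is exactly the argument the paper intends for this corollary: the preceding lemmas (\ref{secondm1iter}, \ref{thirdm1iterA}, \ref{thirdm1iter}, and the unnamed final lemma) are designed to partition the set $\set{\vec{\alpha}: d_1(\vec{\alpha})=-1}$, and your parity observation that $\alpha_3+\alpha_{-1}$ and $\alpha_1+\alpha_{-3}$ cannot both vanish is the one-line check needed for exhaustiveness. The only slip is cosmetic: in applying Lemma \ref{thirdm1iterA} you have the labels reversed --- the case $\alpha_3+\alpha_{-1}=0$ requires $i=-1$ (since $\alpha_{-1}+\alpha_{3}=\alpha_i+\alpha_{-3i}$) and the case $\alpha_1+\alpha_{-3}=0$ requires $i=1$.
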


We now extend this to all $\vec{\alpha}$ with $d_1(\vec{\alpha})<0$.

\begin{lem}\label{mmain}
If $\vec{\alpha} \in A'$ with
\begin{itemize}
\item $d_1(\vec{\alpha}) \leq -2$;
\item $n \neq 2$
\end{itemize}
then $f'(\vec{\alpha}) = X_-$.
\end{lem}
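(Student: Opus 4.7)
The plan is strong induction on $k := -d_1(\vec{\alpha}) \geq 2$. For each $\vec{\alpha}$ with $d_1(\vec{\alpha}) = -k$, I construct a transitive election table with $m = 2$ rows such that $\vec{\alpha}(w, M_1+M_2) = \vec{\alpha}$, with $d_1(\vec{\alpha}(w, M_1)) = -1$ and $d_1(\vec{\alpha}(w, M_2)) = -(k-1)$. Corollary \ref{m1} gives $f'(\vec{\alpha}(w, M_1)) = X_-$, while the inductive hypothesis (or Corollary \ref{m1} again when $k = 2$) gives $f'(\vec{\alpha}(w, M_2)) = X_-$. Lemma \ref{transitivethreeway} then yields $f'(\vec{\alpha}) = X_-$.

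The key construction is a voter-wise splitting. Fix any $\vec{a} \in D_4^n$ with $\tau_A(\vec{a}) = \vec{\alpha}$, and for each voter $v$ split $a_v = b_{1,v} + b_{2,v}$ with both summands in $D_4$. This gives a small set of options for $b_{1,v}$ depending on $a_v$: two values when $a_v = \pm 3$, three when $a_v = \pm 2$, and four when $a_v = \pm 1$. Setting $\vec{b}_i = (b_{i,v})_v$, the $2 \times n$ matrix with these rows and uniform weight vector is a transitive election table with $\vec{\alpha}(w, M_1 + M_2) = \vec{\alpha}$; the $d_1$ values of the two rows are $\sum_v b_{1,v}$ and $-k - \sum_v b_{1,v}$. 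The combinatorial task is therefore to choose splits so that $\sum_v b_{1,v} = -1$.

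The main obstacle is showing that this target sum is achievable whenever $n \neq 2$. The exclusion of $n = 2$ is genuine: with the realization $(a_1, a_2) = (1, -3)$, the achievable values of $\sum_v b_{1,v}$ are $\{-4,-3,-2,0,1,2\}$, which misses $-1$. For $n = 1$, direct enumeration of the cases $d_1 \in \{-2, -3\}$ shows a valid split exists ($a_v = -2$ splits as $(-1)+(-1)$; $a_v = -3$ splits as $(-1)+(-2)$). For $n \geq 3$, a third voter provides enough flexibility through swap moves to adjust the running sum by $\pm 1$ and cover both parities; a case analysis on the profile $(\alpha_i)_{i \in D_4}$ completes the argument, with the few degenerate profiles (for instance, all voters voting $\pm 3$) handled by small auxiliary transitive election tables in the style of Lemmas \ref{almostb4} through \ref{alphaminus2}.
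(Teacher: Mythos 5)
Your overall architecture (induction on $|d_1|$, voter-wise splitting $a_v = b_{1,v}+b_{2,v}$ into a two-row transitive election table, exclusion of $n=2$ with exactly the obstruction the paper also runs into) matches the paper's proof. But there is a genuine gap in your choice of split: you insist on peeling off a row with $d_1 = -1$, and this target is unreachable for a large family of profiles, not ``a few degenerate'' ones. The per-voter option sets are $S(3)=\{1,2\}$, $S(2)=\{-1,1,3\}$, $S(1)=\{-2,-1,2,3\}$, $S(-1)=\{-3,-2,1,2\}$, $S(-2)=\{-3,-1,1\}$, $S(-3)=\{-2,-1\}$, so the largest achievable row sum is $2\alpha_3+3\alpha_2+3\alpha_1+2\alpha_{-1}+\alpha_{-2}-\alpha_{-3}$. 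Whenever $\alpha_{-3}$ dominates (e.g.\ every voter casts relative vote $-3$, so the achievable sums lie in $[-2n,-n]$), the value $-1$ is simply out of range for every $n\geq 2$; and even when it is in range there are parity obstructions (all voters $\pm 2$ with $n$ even forces $\sum_v b_{1,v}$ even). Since a $2\times t$ transitive election table \emph{is} a (weighted) voter-wise split, no auxiliary table with $m=2$ rescues these cases either; you would have to change the induction itself.

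The paper avoids this by targeting the balanced split $d_1(\vec{\alpha}(w,M_1)) = \lfloor m/2\rfloor$, $d_1(\vec{\alpha}(w,M_2)) = \lceil m/2\rceil$ rather than $(-1, m+1)$. The point of that choice is the inequality $\max S(i) \geq i/2 \geq \min S(i)$ for every $i\in D_4$, which guarantees the balanced target always lies in the achievable range; the remaining work (the pairing argument with increments of $1$, $2$ and $4$) is exactly the step-size and parity bookkeeping that you compress into ``swap moves cover both parities,'' and it is nontrivial. Your $n=1$ base case and your $n=2$ counterexample are correct, but as written the proposal does not prove the lemma for, say, $n\geq 2$ voters all casting $-3$, and fixing it requires replacing your split with the balanced one (or an equivalent device), which is the key idea you are missing.
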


\begin{proof}
We will prove this inductively, showing that any $\vec{\alpha}$ with $d_1(\vec{\alpha}) = m \leq -2$ can be composed of $\vec{\beta}_1$ and $\vec{\beta}_2$, where $d_1(\vec{\beta}_i) = m_i$ both have $m<m_i <0$. Corollary \ref{m1} gives that $f'(\vec{\beta}_i)=X_-$ if $m_i=-1$ as a base case.

Let $\vec{e}$ be such that $\tau_A(\vec{e})=\vec{\alpha}$, and for $i \in D_4$ let $A_i = \set{j: e_j=i}$. Now we define the functions $u_{\max}$ and $u_{\min}$ as follows:

\begin{equation}\begin{aligned}
\begin{matrix}
i & u_{\max}(i) & u_{\min}(i) \\
\hline
+3 & +2 & +1 \\
+2 & +3 & -1 \\
+1 & +3 & -2 \\
-1 & +2 & -3 \\
-2 & +1 & -3 \\
-3 & -1 & -2
\end{matrix}
\end{aligned}\end{equation}

Now we define $\vec{u}_{\max}$ and $\vec{u}_{\min}$ such that for $j \in A_i$ we have $u_{\max,j} = u_{\max}(i)$, and similarly for $\vec{u}_{\min}$.

Note that $\vec{e}, \vec{u}_{\max}, \vec{u}_{\min}, \vec{e}+\vec{u}_{\max}, \vec{e}+\vec{u}_{\min} \in D_4^n$. We will define an intermediate $\vec{u} \in D_4^n$ for which $d_1 \circ \tau_A(\vec{u})=\sum_i u_i = \lfloor m/2 \rfloor$, by altering $\vec{u}_{\max}$ incrementally until it reached $\vec{u}_{\min}$, and halting somewhere in that process. Then $\tau_A(\vec{u})$ and $\tau_A(\vec{e}-\vec{u})$ will give the required $\vec{\beta}_i$.

First note that $u_{\max}(i)\geq i/2 \geq u_{\min}(i)$ for all $i$, so $\sum_i u_{\max, i} \geq m/2 \geq \sum_i u_{\min, i}$. Therefore by varying $\vec{u}$ to be between $\vec{u}_{\max}$ and $\vec{u}_{\min}$ at each entry, we can vary $d_1 \circ \tau_A(\vec{u})$ over a range that includes $\lfloor m/2 \rfloor = k$. In order to check that we can indeed achieve $k$, we need to consider the increments that can be added to or subtracted from $d_1 \circ \tau_A(\vec{u})$. Suppose that we start at $\vec{u}_{\max}$ and the total decrease required is $\sum_i u_{\max,i} - k = d$.

If $\alpha_2 + \alpha_{-2} = n$ then we can in fact skip all of this work and simply set $\vec{u} = \vec{e}/2$; otherwise, if $d$ is odd, then since $\alpha_3+\alpha_1+\alpha_{-1}+\alpha_{-3} \geq 1$ we can pick $i_1 \in \cup_{2 \nmid x} A_x$ and decrease $u_{i_1}$ by $1$. This updates the value of $d$ to $d-1$ even.

Now we pair up all $i \in \cup_{2 \nmid x} A_x$ (excluding $i_1$ if it exists) into pairs $(j_1, j_2) \in P_1$ and promise that we will only reduce $u_{j_1}$ by $1$ if we are also reducing $u_{j_2}$ by $1$. With the possible exception of a single $i^{(1)}$ which cannot be paired, this means that all of these indices can now be treated as new indices $j \in P_1$ which permit a reduction of $\vec{u}$ by $2$.

Now if $d$ is even but not a multiple of $4$, then as long as one of $A_2$, $A_{-2}$ and $P_1$ is non-empty we can pick $i_2$ contained in one of them and decrease $u_{i_2}$ by $2$ (from $3$ to $1$ if $i_2 \in A_2$ or from $-1$ to $-3$ if $i_2 \in A_{-2}$, or carrying out the two reductions by $1$ represented by $i_2 \in P_1$). This updates the value of $d$ to $d-2$ a multiple of $4$. Otherwise, $\alpha_3 + \alpha_1+\alpha_{-1}+\alpha_{-3} \leq 2$ and $\alpha_2+\alpha_{-2}=0$, so $n\leq 2$. If $n=1$ then $k \in D_4$ so we set $\vec{u} = (k)$; we have forbidden $n=2$.

Now we pair up all $i \in P_1$, along with $i_2$ if it exists, into pairs $(j_1,j_2) \in P_2$ and promise that we will only reduce $u_{j_1}$ by $2$ if we are also reducing $u_{j_2}$ by $2$. With the possible exception of a single $i^{(2)}$ which cannot be paired, this means that all of these indices can now be treated as new indices $j \in P_2$ which permit a reduction of $\vec{u}$ by $4$.

Now $d$ can be reduced by $4$ by any of the following changes to $\vec{u}$:

\begin{itemize}
\item reducing $u_i$ from $2$ to $-2$ for $i \in A_{-1}$ in the positive case or $i \in A_1$ in the negative case;
\item reducing $u_i$ from $3$ to $-1$ for $i \in A_2$;
\item reducing $u_i$ from $3$ to $-1$ or from $2$ to $-2$ for $i \in A_1$ in the positive case, or from $2$ to $-2$ or from $1$ to $-3$ for $i \in A_{-1}$ in the negative case;
\item reducing $u_i$ from $1$ to $-3$ for $i \in A_{-2}$;
\item reducing $u_j$ by $4$ for $j \in P_2$.
\end{itemize}

After completing all of these reductions, and further reducing $u_{i^{(1)}}$ by $1$ if it exists and $u_{i^{(2)}}$ by $2$ if it exists, we must be in a case where $u_j = u_{\min}(i)$ for $j \in A_i$. At this point, $\sum_i u_i \leq k$. Now if we had not carried out the reductions of $u_{i^{(1)}}$ and  $u_{i^{(2)}}$, we would have had a value at most three more than this; so in this case we would have $\sum_i u_i \leq k+3$. on the other hand, we would never have deviated from $d$ being a multiple of $4$, so we would have $\sum_i u_i \equiv k \pmod 4$. So indeed $\sum_i u_i \leq k$ and we must have had some stage in the reductions of $d$ where $d=0$; for this $\vec{u}$ we have $d_1(\vec{u})=k$, $d_1(\vec{e})=m$, and $\vec{e}-\vec{u} \in D_4^n$ with $d_1(\vec{e}-\vec{u})=m-k$. Thus by induction we know that $f' \circ \tau_A(\vec{e}-\vec{u})=X_-$ and $f(\vec{u})=X_-$, so $f' \circ \tau_A(\vec{e})=X_-$ as required.

Now by induction we have proved that $f'(\vec{\alpha})=X_-$ for all $d_1(\vec{\alpha})<0$ whenever $n \neq 2$.
\end{proof}

The case of $n=2$ is special; here we know from Corollary \ref{m1} that for $\vec{\alpha}$ equal to $(0,0,1,0,1,0)$ and $(0,1,0,0,0,1)$ we have $f'(\vec{\alpha})=X_-$. Then setting $w=(1,1)$ and $M$ to be:
\begin{itemize}
\item $\left(\begin{array}{c c} +1 & -2 \\ -2 & +1 \end{array}\right)$ gives that $f'((0,0,0,2,0,0)) = X_-$;
\item $\left(\begin{array}{c c} -1 & -1 \\ +2 & -2 \end{array}\right)$ gives that $f'((0,0,1,0,0,1)) = X_-$;
\item $\left(\begin{array}{c c} +1 & -3 \\ -2 & +1 \end{array}\right)$ gives that $f'((0,0,0,1,1,0)) = X_-$;
\item $\left(\begin{array}{c c} +1 & -3 \\ -3 & +1 \end{array}\right)$ gives that $f'((0,0,0,0,2,0)) = X_-$;
\item $\left(\begin{array}{c c} -2 & -2 \\ +1 & -1 \end{array}\right)$ gives that $f'((0,0,0,1,0,1)) = X_-$;
\item $\left(\begin{array}{c c} -1 & -1 \\ -1 & -2 \end{array}\right)$ gives that $f'((0,0,0,0,1,1)) = X_-$; and
\item $\left(\begin{array}{c c} -1 & -1 \\ -2 & -2 \end{array}\right)$ gives that $f'((0,0,0,0,0,2)) = X_-$
\end{itemize}
Note that in many of these cases, $\vec{\alpha}(w,M_i)$ is a vector given by a previous item in this list. This addresses all cases of $d_1(\vec{\alpha})<0$ in the case of $n=2$. Hence for all $n$, $f'$ is constant on all $\vec{\alpha}$ with $d_1(\vec{\alpha}) <0$.

We say that $A'_-$ is the subset of $A'$ for which $d_1(\vec{\alpha})<0$.

\begin{cor}\label{a4}
$f'(\vec{\alpha})$ is constant over all $\vec{\alpha} \in A^-_{4,n}$.
\end{cor}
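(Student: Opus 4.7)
The plan is to assemble the preceding lemmas into a complete case analysis over all $\vec{\alpha} \in A'_-$, since Corollary \ref{a4} is essentially a clean restatement of what has already been proved. The key observation is that $A'_-$ decomposes into three disjoint pieces according to the value of $d_1(\vec{\alpha})$ and the parameter $n$, and each piece is already handled.

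First, I would dispatch the layer $d_1(\vec{\alpha}) = -1$: Corollary \ref{m1} gives $f'(\vec{\alpha}) = X_-$ directly for every such $\vec{\alpha}$, regardless of $n$. Next, for $d_1(\vec{\alpha}) \leq -2$ with $n \neq 2$, Lemma \ref{mmain} was precisely designed to give $f'(\vec{\alpha}) = X_-$, by the inductive decomposition $\vec{\alpha} = \vec{\beta}_1 + \vec{\beta}_2$ with both Borda scores strictly between $d_1(\vec{\alpha})$ and $0$, bottoming out at the $d_1 = -1$ base case.

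The only remaining case is $n = 2$ with $d_1(\vec{\alpha}) \leq -2$. This is addressed by the enumeration of small transitive election tables immediately following Lemma \ref{mmain}, which lists every $\vec{\alpha} \in A'^-_{4,2}$ not already covered by Corollary \ref{m1} and exhibits for each one an $X_-$-transitive election table built from vectors already known to evaluate to $X_-$. Since there are only finitely many vectors $\vec{\alpha} \in A'_{4,2}$ with $d_1(\vec{\alpha}) < 0$, this finite check is exhaustive. (The edge case $n=1$ requires no extra work: the only vectors in $A'^-_{4,1}$ are single-voter ballots with $d_1 = -1$, handled by Corollary \ref{m1}.)

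The main obstacle was not here but in the preceding results: constructing the intermediate vector $\vec{u}$ with prescribed parity conditions in Lemma \ref{mmain}, and patching the small-$n$ cases by hand. Consequently, the proof of Corollary \ref{a4} itself reduces to a one-sentence remark that Corollary \ref{m1}, Lemma \ref{mmain}, and the $n=2$ enumeration jointly cover every $\vec{\alpha} \in A'_-$, so $f'(\vec{\alpha}) = X_-$ throughout $A'_-$.
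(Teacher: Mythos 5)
Your proposal is correct and is essentially the paper's own argument: Corollary \ref{m1} for the layer $d_1(\vec{\alpha})=-1$, Lemma \ref{mmain} for $d_1(\vec{\alpha})\leq -2$ with $n\neq 2$, and the explicit list of seven transitive election tables for the remaining $n=2$ vectors. One small slip: your parenthetical about $n=1$ is wrong --- a single voter can cast a relative vote of $-2$ or $-3$, so $A'^-_{4,1}$ contains vectors with $d_1(\vec{\alpha})=-2,-3$ --- but this is harmless, since those vectors are already covered by your second case (Lemma \ref{mmain} excludes only $n=2$, and its proof explicitly handles $n=1$ by setting $\vec{u}=(k)$).
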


All that remains is to extend this to the case of $k>4$. We now have a domain for $f'$ of $A'_k$; but we know that $f'$ agrees with a Borda rule on $A'_4 \subset A'_k$. The following Lemma completes the proof entirely:

\begin{lem}\label{ak}
For all $k \geq 4$, $f' = T$ on $B'_k$ and $f'$ is constant on vectors $\vec{\alpha} \in A'_{k,-}$.
\end{lem}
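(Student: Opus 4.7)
The proof proceeds by induction on $k$, with base case $k = 4$ established by Corollaries \ref{b4} and \ref{a4}. For the inductive step, assume the lemma holds for some $k \geq 4$; we derive it for $k+1$.

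Observe that $A'_k$ embeds in $A'_{k+1}$ as the subset of vectors with $\alpha_k = \alpha_{-k} = 0$. The restriction $f'|_{A'_k}$ is itself a consistent function on $A'_k$, since the three-way consistency conditions for any triple $(c_i, c_j, c_\ell)$ with $i, j, \ell \leq k$ involve only vectors in $A'_k$, and the two-way condition is inherited from $f'$. Hence $f'|_{A'_k}$ is the relative weight-based SWF of some SWF on $k$ candidates satisfying U, A, N and MIIA, and the inductive hypothesis applies: $f' = T$ on $B'_k$ and $f'$ is constant at some value $X \in \set{W,T,L}$ on $A'_{k,-}$.

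The remaining task is to extend this to all $\vec{\alpha} \in A'_{k+1}$ for which $\alpha_k + \alpha_{-k} > 0$. My plan is to construct, for any such $\vec{\alpha}$, a transitive election table $(w, M)$ with $m = 2$ satisfying (i) $\vec{\alpha}(w, M_1 + M_2) = \vec{\alpha}$, (ii) $\vec{\alpha}(w, M_1), \vec{\alpha}(w, M_2) \in A'_k$ (no $\pm k$ entries in either), and (iii) $d_1(\vec{\alpha}(w, M_2)) = 0$. Condition (iii) places $\vec{\alpha}(w, M_2)$ in $B'_k$, so $f'(\vec{\alpha}(w, M_2)) = T$ from the first paragraph. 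By linearity of $d_1$, condition (iii) also forces $d_1(\vec{\alpha}(w, M_1)) = d_1(\vec{\alpha})$, so $\vec{\alpha}(w, M_1)$ lies in $B'_k$ when $\vec{\alpha} \in B'_{k+1}$ and in $A'_{k,-}$ when $\vec{\alpha} \in A'_{k+1,-}$; in either case $f'(\vec{\alpha}(w, M_1))$ is already determined. Lemma \ref{transitivethreeway} then yields $f'(\vec{\alpha}) = f'(\vec{\alpha}(w, M_1))$, giving the required constant value.

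The construction itself generalises (\ref{eq:layerskTET}) from the infinite-voter case. Using one column per voter (so every $w_j = 1$), each voter with $|\pi_{1,2}| = k$ is forced to split as $\pm(k-1) + \pm 1$ in order to keep both rows inside $D_k$, while a voter with $\pi_{1,2} = d \in D_k$ has more freedom in choosing a split $d = d' + d''$ with $d', d'' \in D_k$. The balancing condition $d_1(\vec{\alpha}(w, M_2)) = 0$ then becomes a single linear constraint on a finite catalogue of discrete choices. The main obstacle is the integer-weight constraint of the finite setting: where the infinite construction freely halves entries $\alpha_{2i+1}$, here we must show the discrete choices can always be arranged to produce an exact balance. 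For generic $\vec{\alpha}$ the flexibility afforded by the voters with $|\pi_{1,2}| = k$ (which can each be split in either order) is enough, but edge cases with very small $n$ or highly constrained $\vec{\alpha}$ are likely to need supplementary constructions in the spirit of Lemmas \ref{addingthrees}, \ref{alphaminus1} and \ref{alphaminus2} from the base case, where local correction tables absorb parity obstructions. Once this technical step is handled for every $\vec{\alpha} \in A'_{k+1}$, the induction closes.
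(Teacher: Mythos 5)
Your overall strategy --- induct on $k$, and for $\vec{\alpha}\in A'_{k+1}\setminus A'_k$ build a two-row transitive election table whose rows both lie in $A'_k$ and whose row sum recovers $\vec{\alpha}$ --- is the same as the paper's. But your specific splitting target is infeasible for a large family of vectors, not just for the small-$n$ edge cases you flag. You require $d_1(\vec{\alpha}(w,M_2))=0$, which by linearity forces $d_1(\vec{\alpha}(w,M_1))=b=d_1(\vec{\alpha})$. Since every entry of $M_1$ must lie in $D_k$, the most negative Borda score that row can carry is $-(k-1)n$; but $b$ ranges down to $-kn$ on $A'_{k+1}$ (for instance $\alpha_{-k}=n$, every voter placing the two candidates at opposite extremes). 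Concretely, each voter with $\pi_{1,2}=-k$ must split as $-k=d'+d''$ with $d',d''\in D_k$, which forces $d''\le-1$, so when $\alpha_{-k}$ is close to $n$ the second row cannot sum to zero at all. The decomposition ``one row carries the whole score, the other is an exact tie'' therefore does not exist whenever $|b|>(k-1)n$, and these are exactly the vectors most intrinsically tied to the new candidate.

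The paper's proof sidesteps this by splitting the score in half rather than loading it onto one row: it constructs $\vec{u}$ with $d_1\circ\tau_A(\vec{u})=\lceil b/2\rceil$ and sets $\vec{v}=\vec{e}-\vec{u}$, so $d_1\circ\tau_A(\vec{v})=\lfloor b/2\rfloor$; both halves have magnitude at most $kn/2\le(k-1)n$ and so can live in $A'_k$ (this feasibility is what the explicit $u_{\max}/u_{\min}$ interpolation verifies). The conclusion still follows from Lemma \ref{transitivethreeway}, because for $b<0$ the pair of row values is $\set{X_-,X_-}$ or $\set{T,X_-}$, either of which suffices --- you do not need one row to be an exact tie, only that both rows are weakly negative and at least one strictly so. To repair your write-up, replace your condition (iii) by ``$b\le d_1(\vec{\alpha}(w,M_i))\le 0$ for both $i$, with strict negativity for at least one when $b<0$'', and then supply the integer-splitting argument; as stated, your construction fails before the parity obstructions you anticipate even arise.
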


\begin{proof}
We will show that for $k>4$ and any $n$, a vector $\vec{\alpha} \in A'_k \setminus A'_{k-1}$ with $b = d_1(\vec{\alpha}) \leq 0$ can be written as $\tau_A(\vec{u}+\vec{v})$ where $\tau_A(\vec{u}), \tau_A(\vec{v}) \in A'_{k-1}$ and $b \leq d_1 \circ \tau_A(\vec{u}), d_1 \circ \tau_A(\vec{v}) \leq 0$.

Take such a $\vec{\alpha}$. Then for $\varepsilon \in \pm 1$ we have $\alpha_{\varepsilon k} \geq 1$. Let $\vec{e}$ be a vector in $D_k^n$ with $\tau_A(\vec{e}) = \vec{\alpha}$, and define the sets $A_i = \set{j: e_i = j}$ so that the $A_i$ partition $\set{1, \dots, n}$ and $|A_i| = \alpha_i$.

Define $u_{\max}$ and $u_{\min}$ as follows, for $t>2$:

\begin{equation}\begin{aligned}
\begin{matrix}
i & u_{\max}(i) & u_{\min}(i) \\
\hline
+t & +t-1 & +1 \\
+2 & +1 & +1 \\
+1 & +2 & -1 \\
-1 & +1 & -2 \\
-2 & -1 & -1 \\
-t & -1 & -t+1
\end{matrix}
\end{aligned}\end{equation}

Now $u_{\max}(i)\geq i/2 \geq u_{\min}(i)$ for all $i$, so $\vec{u}$ defined by $u_i = u_{\max}(j)$ for all $i \in A_j$ has $\sum_i u_i \geq b/2$. We pick $i_1 \in A_{\varepsilon k}$. Now we incrementally decrease $\sum_i u_i$ by taking $i \neq i_1$ and changing $u_i$ from $u_{\max}(j)$ to $u_{\min}(j)$ where $i \in A_j$ and $|j| \leq 1$, and decreasing $u_i$ by $1$ repeatedly where $i \in A_j$ and $|j| >2$. As we do so, $\sum_i u_i$ decreases in intervals of $1$ and $3$ until $u_i = u_{\min}(j)$ for $i \neq i_1$ and $u_{i_1} = u_{\max}(\varepsilon k)$. Now $\sum_i u_i +2 - k = \sum_j \alpha_j u_{\max}(j) \leq b/2$, so $\sum_i u_i \leq b/2 + k -2$.

Now at some stage in this process was the last time that $\sum_i u_i$ was at least $b/2$. If this was the last step, then $b/2 < \sum_i u_i \leq b/2 + k-2$. Otherwise, $b/2 \leq \sum_i u_i \leq b/2 + 2$. In either case, we can reduce $u_{i_1}$ by $\sum_i u_i - \lceil b/2 \rceil \leq k-2$ (as $2 \leq k-2$) and get $\sum_i u_i = \lceil b/2 \rceil$.

Now clearly $\vec{u}, \vec{v} \in D_{k-1}^n$, $d_1 \circ \tau_A(\vec{u}) = \lceil b/2 \rceil$ and $d_1 \circ \tau_A(\vec{v}) = \lfloor b/2 \rfloor$, so they meet the conditions required.

We proceed to prove the Lemma by induction. We know that $f'=T$ on $B'_4$ by Corollary \ref{b4}. Suppose that $f' = T$ on $B'_j$ for $j \leq k-1$. Then for $\vec{\alpha} \in B'_k \setminus B'_{k-1}$, we find $\vec{u}$ and $\vec{v}$ as above. The inductive hypothesis gives us that $f' \circ \tau_A(\vec{u}) = f' \circ \tau_A(\vec{v}) = T$, so indeed $f' \circ \tau_A(\vec{u}+\vec{v}) = f'(\vec{\alpha}) = T$. This is true for all $\vec{\alpha} \in B'_k \setminus B'_{k-1}$, so combining with the inductive hypothesis that $f' = T$ on $B'_{k-1}$, we find that $f'=T$ on $B'_k$ as required.

We also know that $f' = X_-$ on $A'_-$ by Corollary \ref{a4}. Suppose that $f'=X_-$ on $A'_{j,-}$ for $j \leq k-1$. Then for $\vec{\alpha} \in A'_{k,-} \setminus A'_{k-1,-}$, we find $\vec{u}$ and $\vec{v}$ as above. The inductive hypothesis gives us that $f' \circ \tau_A(\vec{u}) \in \set{T, X_-}$ and $f' \circ \tau_A(\vec{v}) = X_-$, so indeed $f' \circ \tau_A(\vec{u}+\vec{v}) = f'(\vec{\alpha}) = X_-$. This is true for all $\vec{\alpha} \in A'_{k,-} \setminus A'_{k-1,-}$, so combining with the inductive hypothesis that $f' = X_-$ on $A'_{k-1,-}$, we find that $f'=X_-$ on $A'_{k,-}$ as required.
\end{proof}

Now all that's left is to observe that for $\vec{\alpha} \in A'_k$ with $d_1(\vec{\alpha}) >0$, we have $d_1(\varrho(\vec{\alpha}))<0$, so $f'(\varrho(\vec{\alpha})) = X_-$. Thus $f'(\vec{\alpha}) = -X_-$. So $f'$ must be identical to one of the three Borda rules over all of $A'_k$; either $X_-=L$, giving the positive Borda rule, or $X_- = W$, giving the negative Borda rule, or $X_- = T$, giving the tying rule.

\begin{thm}[Maskin's Theorem on $k>3$ candidates and finite voters]\label{FUAk}
For any SWF $F$ on $V$ finite and $C = \set{c_i}_{i \leq k}$ with $k>3$ satisfying U, MIIA, A and N:
\begin{enumerate}
\item If $F$ satisfies PR then it is the positive unweighted Borda rule or the tie rule.
\item If $F$ satisfies P then it is the positive unweighted Borda rule.
\end{enumerate}
\end{thm}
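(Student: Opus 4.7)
The plan is to build directly on the paragraph preceding the theorem: by Lemma \ref{ak} together with the consistency condition (applied via $\varrho$), we have already shown that $f'$ must agree with one of the three unweighted Borda rules according to whether $X_- = L$ (positive Borda $B_1$), $X_- = W$ (negative Borda $B_{-1}$), or $X_- = T$ (tie rule $B_0$). What remains is a short case-analysis showing that imposing either PR or P rules out the unwanted possibilities by exhibiting a single relative election in each case.

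For part (1), assume PR. Let $\vec{\alpha}^+ \in A'_k$ be the vector with $\alpha^+_1 = n$ and all other coordinates zero, and let $\vec{\alpha}^- = \varrho(\vec{\alpha}^+)$, i.e.\ the vector with $\alpha^-_{-1} = n$ and all others zero. These are realised by $a^\pm \in A$ with $a^+ \geq a^-$ pointwise, so $\vec{\alpha}^+ \geq \vec{\alpha}^-$ in the majorisation order on $A'$; then PR together with Lemma \ref{amiia} (which lets us pass from $f_{i,j}$ to $f'$) forces $f'(\vec{\alpha}^+) \geq f'(\vec{\alpha}^-)$. Since $d_1(\vec{\alpha}^+) = n > 0$ and $d_1(\vec{\alpha}^-) = -n < 0$, the negative-Borda case would give $f'(\vec{\alpha}^+) = L$ and $f'(\vec{\alpha}^-) = W$, contradicting $L \geq W$. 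Hence $X_- \neq W$, leaving only $B_1$ and $B_0$.

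For part (2), assume P. Take $\vec{\alpha}^+$ as above: it corresponds to a relative election in which every voter strictly prefers $c_i$ to $c_j$, so by the Pareto condition $f'(\vec{\alpha}^+) = W$. Since $d_1(\vec{\alpha}^+) > 0$, the tie rule would give $T$ and the negative Borda rule would give $L$, both contradicting $W$. Hence $X_- = L$ and $F$ is the positive unweighted Borda rule, as required.

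The main obstacle is already behind us: the transitive-election-table constructions from Lemma \ref{diagonals} through Lemma \ref{ak} did all the combinatorial heavy lifting by pinning down the structure of $f'$ to a one-parameter family. The theorem itself is then merely a short witness-exhibiting exercise, with the only mild care needed being the translation between the pointwise order on $A$ (used in the definitions of PR and P) and the majorisation order on $A'$ where $f'$ lives.
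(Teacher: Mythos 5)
Your proposal is correct and takes essentially the same route as the paper: the paper leaves the final step implicit, deferring to the preceding paragraph (which pins $f'$ down to one of the three unweighted Borda rules via Lemma \ref{ak} and the $\varrho$-symmetry) and to the same witness arguments used for Corollary \ref{IUAkMaskin}. Your exhibition of the unanimous relative election $\vec{\alpha}^+$ (all voters casting relative vote $+1$) to rule out $X_-=W$ under PR and both $X_-\in\set{W,T}$ under P is exactly the intended completion.
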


This completes our investigation of the MIIA condition in the case of complete anonymity between voters and $k>3$.

\printbibliography

\end{document}